\DeclarePairedDelimiter{\ceil}{\lceil}{\rceil}
\newtheorem{proposition}{Proposition}[section]
\newtheorem{lemma}[proposition]{Lemma}
\newtheorem{corollary}[proposition]{Corollary}
\newtheorem{theorem}[proposition]{Theorem}
\theoremstyle{definition}
\newtheorem{definition}[proposition]{Definition}
\newtheorem{example}[proposition]{Example}
\theoremstyle{remark}
\newtheorem{remark}[proposition]{Remark}
\newcommand{\proplabel}[1]{\label{prop:#1}}
\newcommand{\propref}[1]{Proposition~\ref{prop:#1}}
\newcommand{\lemlabel}[1]{\label{lem:#1}}
\newcommand{\lemref}[1]{Lemma~\ref{lem:#1}}
\newcommand{\thelabel}[1]{\label{the:#1}}
\newcommand{\theref}[1]{Theorem~\ref{the:#1}}
\newcommand{\corlabel}[1]{\label{cor:#1}}
\newcommand{\corref}[1]{Corollary~\ref{cor:#1}}
\newcommand{\remlabel}[1]{\label{rem:#1}}
\newcommand{\exalabel}[1]{\label{ex:#1}}
\newcommand{\exaref}[1]{Example~\ref{ex:#1}}
\newcommand{\seclabel}[1]{\label{sec:#1}}
\newcommand{\secref}[1]{Section~\ref{sec:#1}}
\newcommand\Hom{{\rm Hom}}
\newcommand\ot{\otimes}
\newcommand\ov{\overline}
\newcommand\el{\rm el}
\newcommand\rad{\rm rad}
\newcommand\BS{\rm BS}
\newcommand\GBS{\rm GBS}
\newcommand\Ann{\rm Ann}
\newcommand\Max{\rm Max}
\newcommand\gr{\rm gr}
\newcommand\Spec{\rm Spec}
\newcommand\tdeg{\rm tdeg}
\newcommand\g{\gamma}
\newcommand\hookmapright[1]{\smash{\mathop{\hookrightarrow}\limits^{#1}}}
\newcommand{\veq}{\mathrel{\rotatebox{90}{$=$}}}
\begin{document}

\title{Glider-Brauer-Severi varieties of central simple algebras}
\author[F. Caenepeel]{Frederik Caenepeel}
\address{Shanghai Center for Mathematical Sciences, Fudan University, Shanghai, China}
\email{frederik\textunderscore{}caenepeel@fudan.edu.cn}
\address{Department of Mathematics, University of Antwerp, Antwerp, Belgium}
\email{Frederik.Caenepeel@uantwerpen.be}
\author[F. Van Oystaeyen]{Fred Van Oystaeyen}
\address{Department of Mathematics, University of Antwerp, Antwerp, Belgium}
\email{Fred.Vanoystaeyen@uantwerpen.be}
\subjclass[2010]{16W60, 16W70}
\keywords{Brauer-Severi variety, glider representation}

\begin{abstract}
The glider Brauer-Severy variety $\GBS(A)$ of a central simple algebra $A$ over a field $K$ is introduced as the set of all irreducible left glider ideals in $A$ for some filtration $FA$. For fields we deduce that $\GBS(K)$ equals $R(K) \times \mathbb{Z}$, the product of the Riemann surface of $K$ and $\mathbb{Z}$. For a csa $A$ over $K$ it turns out that $\GBS(A) = \BS(A) \times \GBS(K)$, where $\BS(A)$ denotes the classical Brauer-Severi variety of $A$.
\end{abstract}

\maketitle

\section{Introduction}

Central simpel algebras stem from classical representation theory of finite groups and they were studied in detail in the theory of the Brauer group of a field. Using descent theory for example, there can be found a strong relation to non-Abelian cohomology in some projective linear group. Brauer-Severi varieties are geometric objects associated to a central simple algebra (csa) and these also relate to the non-Abelian cohomology as before, see \cite{Ar}, \cite{Ser}, \cite{Ch}. Brauer-Severi (BS-)varieties found interesting applications in the geometric theory, for example in the Artin-Mumford example of unirational non-rational varieties, and also in some approaches to the Mercurjev-Suslin theorem on the co-torsion of the second $K$-group. Now the Brauer-Severi variety of a csa $A$ over a field $K$ is defined as the variety of irreducible left ideals of $A$, its variety structure coming from an obvious embedding as a closed subset of some Grassmann variety.\\

The authors introduced the notion of a glider representation based on generalized modules with respect to chains of subrings; in the case of glider representations of a finite group this leads to chains of subgroups and then associated chains of group rings in the original total group ring. We studied the new representation theoretic results for glider representations of finite groups in \cite{CVo1}, \cite{CVo2}, \cite{CVo4}. We defined irreducible glider representations but the definition is slightly more elaborate than the one of irreducible modules. Nevertheless, they work well in the representation theory and so the question prompts itself whether the irreducible subgliders of A itself define something like a $\BS$-variety? The gliders in $A$ are defined with respect to a filtration $FA$ having a ring of filtration-degree zero, $B$ say, inducing a filtration on $K$ with ring of filtration degree zero, $R$ say. We work in the situation where $KB=A$ and usually we assume $R$ is a Noetherian ring, in fact in future work we would like to deal with Noetherian integrally closed domains, then $B$ is an order over $R$ in $A$. The definition of the $\GBS(A)$, the glider Brauer-Severi variety, in terms of irreducible subgliders of $A$ with respect to some filtration $FA$ (with some extra properties usually), does indeed lead to some geometric structure. A first observation is that $\GBS(K)$ is not trivial, unlike the $\BS(K)$, so we study the $\GBS$ of a field first and, perhaps a surprise, we find it is the product of the Riemann surface $R(K)$ of $K$ with the integers $\mathbb{Z}$. The latter is the effect of some shift which is always possible on gliders, we can get rid of this factor $\mathbb{Z}$ by introducing glider classes under shifts, but we do not do that here. The Riemann surface of a field is the set of all discrete valuation rings of the field and it may be given a nice topology of Zariski-type and viewed as some geometric object but not a variety. One of the telling results of the theory about $\GBS(A)$ is that the relative $\GBS$ of $A$ introduced in \secref{GBScsa} turns out to be $\BS(A) \times \GBS(K)$, so the product of the $\BS(A)$ and the Riemann surface $R(K)$ with a further factor $\mathbb{Z}$. In a sense this thus yields some geometric structure in terms of the $\BS$ of $A$ and the Riemann surface of $K$ which is not a variety but still nicely described. The theory of orders and maximal orders enters the glider theory naturally here, for example, we also have a version of the Brandt groupoid appearing. We could go to rings $K$ instead of fields and start from Azumaya algebras and $\BS$-schemes, but this is left for work in progress.\\

The structure of the paper is as follows: In \secref{pre} we recall the necessary notions about glider representations and show some results about filtrations on fields and central simple algebras. For example, in \propref{estep} we make a connection between irreducible glider representations and the filtration being strong. \secref{GBSfield} starts with the definition of $\GBS(A)$, the glider Brauer-Severi variety of a csa $A$. However, the rest of the section is entirely devoted to the calculation of the $\GBS$ of a field $K$. We conclude with \theref{GBSfield}, which shows how the Riemann surface $R(K)$ enters the game. Subsequently, in \secref{GBScsa} we deal with the $\GBS$ of central simple algebras $A$ over a field $K$. In fact, we introduce the relative $\GBS^K(A)$, in which we restrict to filtrations $FA$ that induce separated, exhaustive and unbounded filtrations $FK$. The main result of this section is \corref{GBScsa} which shows that $\GBS^K(A)$ equals $\BS(A) \times \GBS(K)$. In \secref{tensor} we define a tensor product with a field extension $L/K$, which in the case of a strong filtration on a csa $A$ over $K$ gives rise to a map $\GBS_F(A) \to \GBS_f(A \ot_K L)$. These observations then allow to deduce that the relative glider Brauer-Severi variety $\GBS^K(A)$ is a twist of the relative glider Brauer-Severi variety of a matrix algebra $M_n(L)$ for a suitable field extension $L/K$. Throughout these sections, we indicate links with the theory of (maximal) orders. This then inspires the construction of the Brandt groupoid of normal glider ideals in a csa $A$. This is carried out in \secref{Brandt}. Finally, since glider representations can be defined for $\Gamma$-filtrations with $\Gamma$ any totally ordered group, we include a final section in which we work with $\Gamma = \mathbb{Z}^2$ with lexicographical order. We establish \theref{GBS2} which shows that all rank 2 valuation rings in a field $K$ enter the scene. 

\section{Some results on separated, exhaustive filtrations on central simple algebras}\seclabel{pre}

In this section $K$ is a field and $A$ a central simple $K$-algebra. We recall that a filtration $FA$ on $A$ is defined by an ascending chain $\ldots \subset F_{-1}A \subset F_0A \subset F_1A \subset \ldots \subset A$ of additive subgroups of $A$, such that $F_nAF_mA \subset F_{n+m}A$ for all $n,m \in \mathbb{Z}$. In particular, $B = F_0A$ is a subring of $A$. We call $FA$ separated, resp. exhaustive if $\bigcap_n F_nA = 0$, resp. $\bigcup_n F_nA = A$. We always assume that the filtrations considered throughout the text are separated, exhaustive and not bounded. The latter means that all $F_nA \neq 0$ and there exists no $m \in \mathbb{Z}$ such that $F_mA = A$. We call the filtration $FA$ strong if $F_nAF_{-n}A = F_0A = F_{-n}AF_nA$ for all $n \geq 0$. In fact, $FA$ is strong if and only if $F_{1}AF_{-1}A = F_0A = F_{-1}AF_1$.\\

When fixing such a filtration $FA$, one can consider left (or right) $FA$-glider representations, which are given by left $F_0A = B$-modules $M$ embedded in a left $A$-module $\Omega$ together with a chain of descending left $B$-modules
$$M \supset M_1 \supset M_2 \supset \ldots$$
such that for all $i \leq j,~F_iAM_j \subset M_{j-i}$, where the action is the action induced by the $A$-action on $\Omega$, see \cite{CVo1} for the exact definition. In \cite{NVo1} the notion of trivial subgliders was introduced and it was later refined in \cite{CVo1}. We recall that a subglider $N$ of $M$ is said to be trivial if either:\\

$T_1$. There is an $n \in \mathbb{N}$ such that $N_n = B(N)$ but $M_n \neq B(M)$.\\
$T_2$. There is an $n \in \mathbb{N}$ such that $N_n = 0$ but $M_n \neq 0$.\\
$T_3$. There exists a monotone increasing map $\alpha: \mathbb{N} \to \mathbb{N}$ such that $N_n = M_{\alpha(n)}$ and $\alpha(n) - m \geq \alpha(n - m)$ for all $m \leq n$ in $\mathbb{N}$.\\

The monotone increasing map $\alpha$ for type $T_3$ is in fact strict monotone. Indeed, $1 \leq n+1$ implies that $\alpha(n+1) - 1 \geq \alpha(n)$. We also recall the notion of essential length of a glider. If there exists $d \geq 0$ such that $M_d \supsetneq M_{d+1}$, but $M_{d+1} = M_{d+n}$ for all $n > 0$, we say that $M$ is of finite essential length $d$. If such a $d$ does not exist, we say that $M$ is of infinite essential length.\\

As a particular example of a left (and right) $FA$-glider, we have the negative part of the filtration:
$$ F^{-}B: \quad B = F_0A \supset F_{-1}A \supset F_{-2}A \supset \ldots$$
Let us investigate when this glider is in fact irreducible.\\

For ease of notation, we will write $F_n$ rather than $F_nA$. First of all, if $F_0 = F_{-1}$, then for all $n > 0$ we would have $F_n = F_nF_0 = F_nF_{-1} \subset F_{n-1}$. It follows that $F^+A$ is the trivial chain and in order to be exhaustive $F_0 = A$ and then $FA$ is the just the trivial chain, whence is not separated. Thus $F_{-1} \subsetneq F_0$. Also, there are no idempotent elements in the negative part. Indeed, suppose on the contrary that $F_{-n}F_{-n} = F_{-n}$, then for all $k > 0$ it holds $F_{-n} = F_{-n}^k \subset F_{-nk}$.  This would entail that $F_{-n}$ is in the core of the filtration. 
\begin{remark} 
By convention, we know that the left glider ideal $F^-B$ is of infinite essential length. If we would drop the left boundedness condition then $F^{-}B$ being of finite essential length, say $d$, would entail that $FA$ is a positive filtration. Indeed, it would follow that $F_{-d} \supsetneq F_{-d-1} =0$ because the filtration is separated. By irreducibility of $F^-B$, the subglider
$$F_dF_{-d} \supset F_{d-1}F_{-d} \supset F_{1}F_{-d} \supset F_{-d} \supsetneq 0 \supset \ldots$$
must be of type $T_3$ and hence $F_0 = F_{d}F_{-d}$. It follows that $F_{-1}  = F_0F_{-1} = F_dF_{-d}F_{-1} \subset F_dF_{-d-1} = 0$, whence $FA$ is indeed a positive filtration.
\end{remark}

The filtration $FA$ induces on $K$ a filtration $FK$ defined by $F_nK = F_n \cap K$. The filtration is obviously also separated and exhaustive. The subring $F_0K$ must be proper, for otherwise all $F_nA$ are $K$-vector spaces and since $A$ is finite dimensional, this would entail left and right boundedness of $FA$. 
We suppose moreover that the induced filtration $FK$ on $K$ has non-trivial negative part. Since $K$ has no zero divisors, this is equivalent to saying that $F_{-n}K \neq 0$ for all $n \geq 0$.\\

Separatedness of $FA$ entails the existence of a smallest integer $e \geq 1$ such that $F_{-e} \supsetneq F_{-e-1}$, i.e. $G(A)_{-e} = F_{-e}/F_{-e-1} \neq 0$. 

\begin{proposition}\proplabel{estep}
Let $FA$ be a filtration such that $F^{-}B$ is a left irreducible $FA$-glider, then the filtration is a strong $e$-step filtration, with $e$ defined as above.
\end{proposition}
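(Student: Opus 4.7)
My plan is to exploit irreducibility of $F^{-}B$ by testing it against carefully chosen subgliders, each of which is forced into one of the trivial types $T_1$, $T_2$, $T_3$; the forced type then produces the structural equalities defining a strong $e$-step filtration. A first reduction: because $FA$ is separated, $\bigcap_n F_{-n}A = 0$, so the body $B(F^{-}B) = 0$. Consequently types $T_1$ and $T_2$ both collapse to the condition that some term of the subglider's chain vanish, and any subglider with entrywise non-zero chain must be of type $T_3$, meaning $N_i = F_{-\alpha(i)}$ for a strictly monotone $\alpha: \mathbb{N} \to \mathbb{N}$ with $\alpha(n) - m \geq \alpha(n-m)$.

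The key construction is the following: for any right ideal $J \subset F_0$ of $B$, the chain $N_0 := J$, $N_i := F_{-i}J$ defines a subglider of $F^{-}B$, since $F_j N_i = F_j F_{-i} J \subset F_{j-i} J = N_{i-j}$ and $N_i \subset F_{-i}$. I apply this with $J := F_e F_{-e}$, a two-sided ideal of $B$ containing $F_{-e}$ (because $1 \in F_e$); by the definition of $e$ we have $F_{-e} = F_{-1}$, so in fact $F_{-1} \subset J \subset F_0$. Using the non-triviality of $FK$ in the negative direction, one exhibits non-zero elements $u^n v \in N_n$ for every $n$ (with $0 \neq u \in F_{-1}K$ and $0 \neq v \in F_{-e}$, observing that $u^n v \neq 0$ by centrality of $K$ in $A$), so the chain is entrywise non-zero and the subglider must be of type $T_3$. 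Hence $J = F_{-\alpha(0)}$, and the sandwich $F_{-1} \subset J \subset F_0$ forces $J \in \{F_0, F_{-1}\}$. The case $J = F_{-1}$ is excluded via the non-idempotency remark $F_{-1}^2 \subsetneq F_{-1}$ combined with the shift inequality on $\alpha$, yielding $F_e F_{-e} = F_0$. The same argument with $J = F_{-e}F_e$ gives $F_{-e} F_e = F_0$.

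Granted these two equalities, the $e$-step structure follows by direct manipulation. First, $F_{-e-1} = F_{-e-1} F_e F_{-e} \subset F_{-e} F_{-e} = F_{-e}^2 \subset F_{-2e}$, which combined with $F_{-2e} \subset F_{-e-1}$ gives $F_{-e-1} = F_{-e-2} = \ldots = F_{-2e} = F_{-e}^2$. Inductively, $F_{-ke-j} = F_{-e}^{k+1}$ for $0 \leq j < e$, with a strict drop $F_{-ke} \supsetneq F_{-ke-1}$ at every $k \geq 1$ (again a consequence of the non-idempotency remark: $F_{-e}^k \supsetneq F_{-e}^{k+1}$). The positive part is obtained symmetrically. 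This is the strong $e$-step structure.

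The main obstacle I anticipate is the exclusion of the case $J = F_{-1}$ in the middle step: the non-idempotency remark immediately gives $F_{-1}^2 \subsetneq F_{-1}$, but converting this into an outright contradiction with type $T_3$ requires careful tracking of how $\alpha$ traverses successive plateaus of the filtration, iterating the shift inequality $\alpha(n) - m \geq \alpha(n-m)$ sufficiently to clash with the defining property of $e$ as the first strict drop in the negative part.
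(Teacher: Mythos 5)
Your setup is sound: separatedness kills the body of $F^{-}B$, so types $T_1$ and $T_2$ both reduce to the vanishing of some term; your chain $N_i=F_{-i}J$ with $J=F_eF_{-e}$ is a genuine subglider with entrywise non-zero terms (only note that $J$ must be a \emph{left} ideal for $N_0$ to be an $F_0$-module -- harmless here, since $F_eF_{-e}$ is two-sided); and the sandwich $F_{-1}\subset J\subset F_0$ plus $T_3$-triviality correctly reduces the first key equality to excluding $J=F_{-1}=F_{-e}$. Moreover, granted $F_eF_{-e}=F_0=F_{-e}F_e$, your chain of inclusions $F_{-e-1}\subset F_{-e-1}F_eF_{-e}\subset F_{-1}F_{-e}=F_{-e}^2\subset F_{-2e}\subset F_{-e-1}$ and its induction give the plateaus and the identities $F_{-ne}=F_{-e}^n$ in one stroke, which is tidier than the paper's treatment of that stretch (the paper needs separate subglider arguments there); your stated induction has an off-by-one ($F_{-ke-j}=F_{-e}^{k+1}$ holds for $1\leq j\leq e$, not $0\leq j<e$), but that is cosmetic.

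The gap is exactly where you flagged it, and it is genuine: non-idempotency of $F_{-1}$ together with the shift inequality on $\alpha$ for \emph{your} chain does not exclude $J=F_{-e}$. If $F_eF_{-e}=F_{-e}$, then $N_i=F_{-i}F_{-e}$, and $T_3$-triviality only says each $F_{-i}F_{-e}$ equals some $F_{-\alpha(i)}$ with $\alpha(i)\geq\alpha(0)+i$; since $F_{-i}F_{-e}\subset F_{-i-e}$ automatically and $F_{-e}^{k+1}\subset N_{ke}$ decreases strictly in $k$, everything is consistent with a superadditive $\alpha$ and no clash with the definition of $e$ ever materializes. The problem is that your chain has no position at which it provably \emph{coincides} with the ambient glider at a strict drop, so superadditivity never forces $\alpha(0)=0$. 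The paper's remedy is to re-index: take $P_n=F_{e-n}F_{-e}$ as a subglider of $F_0\supset F_{-1}\supset\ldots$; then $P_e=F_0F_{-e}=F_{-e}=M_e$ exactly, while $M_{e+1}=F_{-e-1}\subsetneq F_{-e}$, which pins $\alpha(e)\leq e$ and hence $\alpha(0)\leq\alpha(e)-e\leq 0$, giving $F_eF_{-e}=P_0=F_0$ with no case analysis. You need this anchor (or an equivalent device) to close the argument. Note also that the companion equality $F_{-e}F_e=F_0$ is not literally ``the same argument,'' since $F_{-e}F_{e-n}$ is not obviously a chain of left $F_0$-modules; one workable route is to first secure $F_eF_{-e}=F_0$, apply your sandwich to $J'=F_{-e}F_e$, and rule out $F_{-e}F_e=F_{-e}$ by computing $F_0F_e=(F_eF_{-e})F_e=F_e(F_{-e}F_e)=F_eF_{-e}=F_0$, whence $F_e=F_0$ and then $F_0=F_eF_{-e}=F_0F_{-e}=F_{-e}\subsetneq F_0$, a contradiction.
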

\begin{proof}
To begin with, consider the subglider
$$\begin{array}{ccccccccccc}
F_0 & \supset & F_{-1} & \supset & \ldots & \supset &  F_{-e} & \supsetneq & F_{-e-1} & \supset & \ldots \\
\cup && \veq && &&\cup  &&\cup&&\\
F_{e}F_{-e} & \supset & F_{e-1}F_{-e} &  \supset & \ldots & \supset & F_0F_{-e} & \supset & F_{-1}F_{-e} & \supset & \ldots
\end{array}$$

For obvious reasons it cannot be trivial of type $T_1$ or $T_2$. Because $F_0F_{-e} = F_{-e} \supsetneq F_{-e-1}$, triviality of type $T_3$ implies that $F_eF_{-e} = F_0$. A similar argument shows that $F_{-e}F_e = F_0$ as well. Irreducibility of $F^-B$ implies furthermore that $F_{-e}F_{-e} = F_{-2e -d}$ for some $d \geq 0$. But we also have 
$$F_{-e} = F_0F_{-e} = F_eF_{-e}F_{-e} = F_eF_{-2e-d} \subset F_{-e-d},$$
and by definition of $e$ it follows that $d = 0$, i.e. $F_{-e}F_{-e} = F_{-2e}$. Suppose now that $F_{-2e} = F_{-3e}$, then we would have 
$$F_{-e} = F_0F_{-e} = F_eF_{-e}F_{-e} = F_eF_{-2e} = F_eF_{-3e} \subset F_{-2e},$$
contradiction. Hence $F_{-2e} \supsetneq F_{-3e}$. This allows us to show that $F_{-e}^3 = F_{-3e}$. Indeed, irreducibility of $F^-B$ implies that $F_{-e}^3 = F_{-3e -d}$ for some $d \geq 0$. We also have
$$F_{-2e} = F_0F_{-2e} = F_eF_{-e}F_{-2e} = F_eF_{-3e-d} \subset F_{-2e-d}.$$
If $d > 0$, then we would have that $F_{-2e} \subset F_{-2e-d} \subset F_{-3e} \subset F_{-2e}$, contradiction. By induction one then shows that for all $n \geq 0$, $F_{-ne} \supsetneq F_{-(n+1)e}$ and $F_{-e}^n = F_{-ne}$.\\
Consider now the subglider
$$F_eF_{-2e} \supset F_{e-1}F_{-2e} \supset \ldots \supset F_0F_{-2e} \supset F_{-1}F_{-2e} \supset \ldots$$
of 
$$F_{-e} \supsetneq F_{-e -1} \supset \ldots \supset F_{-2e} \supset F_{-2e-1} \supset \ldots $$
Since $F_eF_{-2e} = F_eF_{-e}F_{-e} = F_{-e}$ and $F_{-1}F_{-2e} = F_{-e}F_{-2e} = F_{-3e} \subsetneq F_{-2e}$ and triviality of type $T_3$ implies that the associated monotone increasing map is the identity on $\{0,1\ldots, e\}$. In particular, it follows that
$$F_{-e-1} = F_{e-1}F_{-2e} = F_{e-1}F_{-e}F_{-e} \subset F_{-1}F_{-e} = F_{-e}F_{-e} = F_{-2e},$$
whence 
$$F_{-e-1} = F_{-e-2} = \cdots = F_{-2e}.$$
If $F_{-2e} = F_{-2e-1}$, then $F_{-e} = F_eF_{-2e} = F_eF_{-2e-1} \subset F_{-e-1}$, contradiction. Using induction one then shows that

$$F_{-ne} = F_{-ne +e-1}  \quad {\rm for~ all~} n \geq 0.$$

Using similar arguments, one shows by induction that $F_{ne}F_{-ne} = F_0$ for all $n > 0$. We also have
$$F_{e-1} = F_{e-1}F_0 = F_{e-1}F_{-e}F_e \subset F_{-1}F_{e} = F_{-e}F_e = F_0.$$
Both results then allow to show that 
$$F_{ne} = F_{ne +e-1}  \quad {\rm for~ all~} n \geq 0.$$
We can conclude if we show that $F_{ne} = F_e^n$ for all $n \geq 2$. Since we have
$$F_{ne} = F_{ne}F_0 = F_{ne}F_{-e}F_e \subset F_{(n-1)e}F_e,$$
this follows easily using induction.
\end{proof}

In case $A = K$ is a field, we know by \cite[Theorem 2.6]{LiVo} that $F_0K = R$ is a discrete valuation ring and the associated valuation filtration is $FK$ if and only if the associated graded $G(K) = \oplus_{n \in \mathbb{Z}} F_nK/F_{n-1}K$ is a domain. This result allows to show

\begin{proposition}\proplabel{GKdomain}
Let $FK$ be a separated, exhaustive filtration with non-trivial negative part and such that $F^-R$ is an irreducible $FK$-glider. If $e=1$, then $R = F_0K$ is a discrete valuation ring and the filtration corresponds to the valuation filtration.
\end{proposition}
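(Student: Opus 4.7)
The plan is to show that $F_{-1}$ is a principal ideal of $R$ and then read off that $R$ is a local ring whose maximal ideal is principal, so (combined with separatedness) $R$ is a DVR and $FK$ is its valuation filtration. One could instead invoke \cite[Theorem 2.6]{LiVo} and reduce the problem to verifying that $G(K)$ is a domain, but the direct construction of the valuation is essentially no harder. The starting point is \propref{estep} applied with $e = 1$: it yields that $FK$ is a strong $1$-step filtration, so $F_n = F_1^n$ and $F_{-n} = F_{-1}^n$ for $n \geq 0$, while $F_1 F_{-1} = F_0 = R$.

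I would then manufacture a uniformiser as follows. Pick any $\pi \in F_{-1} \setminus F_{-2}$, which is non-empty by definition of $e$, and form the descending chain $N_i := F_{-i}\pi$ inside $F^-R$. The containment $N_i \subset F_{-i-1} \subset F_{-i}$ and the relation $F_j N_i = F_j F_{-i}\pi \subset F_{j-i}\pi = N_{i-j}$ show that $N$ is a subglider of $F^-R$. Since $K$ has no zero divisors and $FK$ has non-trivial negative part, no $N_i$ vanishes, excluding trivialities of types $T_1$ and $T_2$. Irreducibility of $F^-R$ then forces $N$ to be $T_3$-trivial: $F_{-i}\pi = F_{-\alpha(i)}$ for some strictly monotone $\alpha:\mathbb{N} \to \mathbb{N}$. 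Specialising at $i = 0$ gives $R\pi = F_{-\alpha(0)}$; since $\pi$ itself lies in $R\pi \setminus F_{-2}$, the only compatible value is $\alpha(0) = 1$, whence $F_{-1} = R\pi$.

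From here the rest is formal. The principality of $F_{-1}$, applied to an arbitrary $\pi \in F_{-1} \setminus F_{-2}$, shows that any two such uniformisers differ by a unit of $R$. Combined with the description $F_{-1} \setminus F_{-2} = (R \setminus R\pi)\pi$ (using that $\pi$ is cancellable in the field $K$), this yields $R \setminus R\pi = R^\times$, so $R$ is local with maximal ideal $\mathfrak{m} = R\pi$. The equality $F_1 F_{-1} = R$ then forces $F_1 = R\pi^{-1}$, and strongness promotes this to $F_n = R\pi^{-n}$ for every $n \in \mathbb{Z}$. Consequently every $x \in K^*$ admits a unique expression $u\pi^m$ with $u \in R^\times$, $m \in \mathbb{Z}$, giving a discrete valuation $v(x) = m$ whose valuation ring is $R$ and whose valuation filtration coincides with $FK$; separatedness of $FK$ translates into $\bigcap_n \mathfrak{m}^n = 0$, which is automatic in a DVR.

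The main obstacle is the principality of $F_{-1}$, and this is exactly the point at which irreducibility of $F^-R$ is genuinely used, through the subglider $F_{-i}\pi$. Once $F_{-1} = R\pi$ is in hand, the remaining identifications of $R$ as a local domain with principal maximal ideal, and of $FK$ as its valuation filtration, are formal consequences of strongness and of the fact that we are working inside a field.
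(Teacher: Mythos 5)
Your proof is correct, but it takes a different route from the paper. The paper's proof is indirect: it invokes \cite[Theorem 2.6]{LiVo} (the filtration is the valuation filtration of a DVR iff $G(K)$ is a domain), and then uses irreducibility of $F^-R$ together with Dade's theorem for the strongly graded ring $G(K)$ to rule out a nonzero nilpotent class $a \in F_0K \setminus F_{-1}K$ with $a^n \in F_{-1}K$; a short argument then pins down $F_{-1}K$ as the maximal ideal and identifies the two filtrations by their degree $\pm 1$ parts. You instead build the valuation by hand: the subglider $F_{-i}\pi$ for $\pi \in F_{-1}\setminus F_{-2}$ must be $T_3$-trivial, which forces $F_{-1}=R\pi$ to be principal, and from there locality of $R$, the formula $F_n = R\pi^{-n}$, and the unique factorization $x = u\pi^m$ all follow formally from strongness and cancellation in $K$. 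Your subglider trick is in fact the same device the paper deploys later in the converse direction of \propref{fieldstrong} (where $(x) = F_{-1}$ is derived to contradict non-principality), so your argument is very much in the spirit of the paper even though it bypasses both \cite{LiVo} and \cite{Dade}. What your approach buys is a self-contained, elementary proof; what the paper's buys is brevity and a conceptual link to the associated graded ring, which it reuses for the central simple algebra case in \propref{strongCSA}. Two small points you should make explicit: ruling out $T_1$ uses $B(F_{-i}\pi)=0$, which follows from separatedness; and ruling out $\alpha(0)=0$ uses $R\pi \subset F_{-1} \subsetneq F_0$, established at the start of \secref{pre}.
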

\begin{proof}
Suppose that $G(K)$ is not a domain, whence not semiprime. Since $FK$ is strong, the associated graded ring $G(K)$ is strongly graded. From \cite{Dade} we know that $G(K)$-gr $\cong F_0K/F_{-1}K$-mod is an equivalence of categories, whence there exists $a \in F_0K - F_{-1}K$ and $n > 1$ minimal such that $a^n \in F_{-1}K$. The induced filtration on $F_0Ka$ defines a subglider, which must be trivial of type $T_3$. Because $a \in F_0Aa - F_{-1}A$, this entails that $F_0Aa = F_0A$. In particular,  $a$ is then invertible in $F_0A$, say $ba = 1$ for $b \in F_0A$. But then $a^{n-1} = 1a^{n-1} = ba^n \in F_{-1}A$, contradicting the minimality of $n$. Hence $G(K)$ is a domain. It follows that $F_0K = B$ is a DVR with unique maximal ideal $M$. We have that $F_{-1}K = M^n$ for some $n \geq 1$, but because $F_0K/F_{-1}K$ is a domain, $n = 1$, i.e. $F_{-1}K = M$. If $F^vK$ denotes the valuation filtration, then 
$$F_1K = F_1KF_0K = F_1KMF^v_1K = F_1KF_{-1}KF^v_1K = F^v_1K.$$
and since both filtrations are determined by their degree -1 and degree 1 part, both filtrations agree.
\end{proof}

In \cite[Theorem 3.10]{LiVo}, the authors give a generalization by showing that quasi-simple rings $A$ with exhaustive and separated filtration $FA$ and $G(A)$ a semiprime, Noetherian P.I., ring actually have for $G(A)$ a prime ring. 

\begin{proposition}\proplabel{strongCSA}
Let $A$ be a simple Artinian ring with separated exhaustive filtration $FA$ that induces $FK$ with non-trivial negative part. If $F^-B$ is an irreducible $FA$-glider then $F_0K$ is a discrete valuation ring in $K$. If $FK$ is a strong $f$-step filtration then $F_{nf}A = F_{nf}KF_0A$ for all $n \in \mathbb{Z}$. 
\end{proposition}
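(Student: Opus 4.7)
The plan splits the two statements. The second follows from a short identity: the strong $f$-step hypothesis on $FK$ gives $1 = \sum_i a_i b_i$ with $a_i \in F_{nf}K$ and $b_i \in F_{-nf}K$, and for any $x \in F_{nf}A$ one writes $x = \sum_i a_i(b_i x)$ with each $b_i x \in F_{-nf}K \cdot F_{nf}A \subset F_0A$; this gives $F_{nf}A \subset F_{nf}K \cdot F_0A$, and the reverse inclusion is immediate.

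For the DVR claim, I would first apply \propref{estep} so that $FA$ is strong $e$-step, and rescale by $F'_nA := F_{ne}A$ to reduce to $e = 1$. The key construction is that for any nonzero central $a \in K$, the chain $N_j := a F_{-j}A$ is an $FA$-subglider of $F^-B$: centrality of $a$ gives $F_iA \cdot N_j = a F_iA F_{-j}A \subset a F_{i-j}A = N_{j-i}$ for $i \leq j$. Applied to $a \in F_0K \setminus F_{-1}K$, each $N_j$ is nonzero (as $a$ is central-regular in the simple ring $A$) while $\bigcap_j N_j = a \cdot \bigcap_j F_{-j}A = 0$ by separatedness, so trivialities $T_1$ and $T_2$ are excluded, and irreducibility of $F^-B$ forces type $T_3$, i.e.\ $N_j = F_{-\alpha(j)}A$ for a strictly increasing $\alpha$. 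The position $a \in aB \setminus F_{-1}A$ pins $\alpha(0) = 0$, so $aB = B$ and $a$ is a unit in $B$; since $a^{-1} \in K$ it is in fact a unit in $R = B \cap K$. Hence every element of $R \setminus F_{-1}K$ is a unit, so $R$ is local with maximal ideal $\mathfrak{m} = F_{-1}K$.

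Next I would produce a uniformizer. Let $f \geq 1$ be the smallest integer with $F_{-f}K \supsetneq F_{-f-1}K$, so $\mathfrak{m} = F_{-1}K = \cdots = F_{-f}K$, and pick $b \in F_{-f}K \setminus F_{-f-1}K$. Rerunning the subglider argument on $N_j = b F_{-j}A$: since $b$ is a non-unit in $R$ the glider is proper, type $T_3$ again applies, and the placement $bB \subset F_{-f}A$ together with $b \notin F_{-f-1}A$ now pins $\alpha(0) = f$. This yields $bB = F_{-f}A$; intersecting with $K$ gives $bR = F_{-f}K = \mathfrak{m}$, so $\mathfrak{m}$ is principal. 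Finally, $\bigcap_n \mathfrak{m}^n \subset \bigcap_n F_{-nf}K = 0$ by separatedness of $FK$, and the standard factorization (every nonzero $x \in R$ equals $b^n u$ for some $n \geq 0$ and unit $u$) identifies $R$ as a DVR.

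The main obstacle I expect is the $T_3$ bookkeeping in both subglider arguments: after ruling out $T_1$ and $T_2$ one must verify that the strict monotone $\alpha$ supplied by triviality is compatible with the positional data on $a$ (respectively $b$) at $j = 0$, so that $\alpha(0)$ is forced to the correct value. Everything else, notably the subglider axiom checks via centrality and the deduction of DVR from ``local with principal maximal ideal and $\bigcap_n \mathfrak{m}^n = 0$'', is either a direct computation or standard commutative algebra.
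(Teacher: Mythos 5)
Your proof is correct, but it takes a genuinely different route from the paper's. For the DVR claim, the paper passes to the associated graded ring: it uses irreducibility of $F^-B$ and a subglider of the form $J^{n-1} \supset F_{-1}J^{n-1} \supset \ldots$ to show $G(A)$ is semiprime, invokes a theorem of Li--Van Oystaeyen to upgrade this to $G(A)$ being prime, concludes that $G(K) \subset Z(G(A))$ is a domain, and then cites a second Li--Van Oystaeyen theorem to deduce that $F_0K$ is a DVR. You instead work directly in $A$ with the subgliders $N_j = aF_{-j}A$ for central $a$: applied to $a \in F_0K \setminus F_{-1}K$ this pins $\alpha(0)=0$ and hence $aB = B$, showing $R=F_0K$ is local with maximal ideal $\mathfrak m = F_{-1}K$; applied to $b \in F_{-f}K\setminus F_{-f-1}K$ it gives $bB = F_{-f}A$ and hence $\mathfrak m = bR$ is principal; separatedness of $FK$ then forces $\bigcap_n \mathfrak m^n = 0$ and the standard local-PID characterization finishes. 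This is more self-contained than the paper's proof (no graded-ring machinery or external citations), though it forgoes the paper's extra conclusion that $FK$ is, up to rescaling, the valuation filtration. For the second claim your unit-identity argument $1 = \sum_i a_ib_i$ is both simpler and more general than the paper's: it uses only the strong $f$-step hypothesis on $FK$, whereas the paper runs another $T_3$ subglider computation together with \propref{estep} and so relies on irreducibility of $F^-B$. One small point: the rescaling $F'_nA := F_{ne}A$ to ``reduce to $e=1$'' at the start of your DVR argument is unnecessary --- the subsequent positional bookkeeping for $\alpha(0)$ works for any $e$ --- and the implicit claim that irreducibility of $F^-B$ persists under this rescaling would itself require justification, so you should simply drop that step.
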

\begin{proof}
As in the beginning of the proof of \propref{GKdomain} we can show that $G(A)$ must be a prime ring by using \cite[Theorem 3.10]{LiVo}. Indeed, suppose $G(A)$ is not semiprime. Then there exists an ideal $J \triangleleft F_0A$ such that $J^n \subset F_{-1}A$ for some $n > 1$ and assume $n$ is minimal with this property. The triviality of the subglider $J^{n-1} \supset F_{-1}J^{n-1} \supset F_{-2}J^{n-1} \supset \ldots$ entails that $F_0 = J^{n-1}$ but then it follows that $J \subset F_{-1}$, contradiction.  The center of a prime ring is an integral domain, whence so is $G(K) \subset Z(G(A))$. Hence $FK$ is the filtration associated to a valuation ring $O_v$ on $K$ when considered over $f\mathbb{Z}$ for some $f > 0$. In particular, $f$ is minimal with the property that $F_{-f-1}K \subsetneq F_{-f}K$ and $G(K)$ is $f\mathbb{Z}$-strongly graded. Define the glider $M_j = F_{-f}KF_{f-j}A \subset F_{-j}A$. For $i \leq j$ we indeed have $$F_iAM_j \subset = F_{-f}KF_iAF_{f-j}A \subset F_{-f}KF_{f+i-j}A = M_{j-i}.$$
Irreducibility of $F^-B$ implies that the subglider $M \supset M_1 \supset \ldots$ is trivial of type $T_3$. If the strict monotone increasing map $\alpha$ is not the identity, then $F_{-f}KF_0A = F_{-f-d}A$ for some $d >0$. Since $1 \in F_0A$, this would entail 
$$F_{-f}K \subset F_{-f-d}A \cap K = F_{-f-d}K.$$
By definition of $f$ this yields $d =0$, i.e. $F_{-f}F_0A = F_{-f}A$. By \propref{estep} we know that $FA$ is a strong $e$-step filtration. Since $GK$ is an $f$-step filtration, $e$ divides $f$. Hence
$$F_{-nf}KF_0A = F_{-nf}A \quad \forall n \geq 0.$$
From $F_{-f}KF_0A = F_{-f}A$ it follows that $F_{-f}KF_fA = F_0A$. Consequently
$$F_fKF_0A = F_fKF_{-f}KF_fA = F_0KF_fA = F_fA,$$
and then 
$$F_{nf}KF_0A = F_{nf}A \quad \forall n \geq 0$$
follows. 
\end{proof}

In \cite[Theorem 2.3]{NVo1} the authors introduced completely irreducible gliders and showed that a simple Artinian ring $A$ has a filtration $FA$ with subring $F_0A = A'$ making the negative part into a completely irreducible $FA$-glider if and only if $A$ is a skewfield, $A'$ is a discrete valuation ring and $FA$ is the corresponding valuation filtration. Our approach leads to a generalization, as we do not require the notion of completely irreducibleness.

\begin{proposition}
Let $A$ be a central simple $K$-algebra with exhaustive, separated filtration $FA$ inducing a filtration $FK$ with non-trivial negative part and such that $F^-B$ is an irreducible left glider. Then $A$ is a skew field. 
\end{proposition}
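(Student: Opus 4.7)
The plan is to deduce, in three stages, that (a) the residue ring $E := B/F_{-e}A = G(A)_0$ is a skew field, (b) $F_{-e}A$ coincides with the Jacobson radical of $B$, and (c) every nonzero element of $A$ is a unit. The technical engine throughout will be a single family of subgliders of $F^-B$: for each $a \in B$ set $N_n := F_{-n}A \cdot a$, which is routinely checked to be a subglider (the $N_n$ are descending left $B$-submodules of $F_{-n}A$ with $F_jA \cdot N_n \subset F_{j-n}A \cdot a = N_{n-j}$ for $j \leq n$). When $a \neq 0$, each $N_n$ contains the nonzero set $F_{-n}K \cdot a$ (using the non-trivial negative part of $FK$), so triviality of types $T_1$ and $T_2$ is excluded, forcing type $T_3$: there is a strictly monotone $\alpha$ with $N_n = F_{-\alpha(n)}A$.

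For (a), I would take $a \in B \setminus F_{-e}A = B \setminus F_{-1}A$. Then $a \in N_0 = Ba = F_{-\alpha(0)}A$, together with $a \notin F_{-1}A$, forces $\alpha(0) = 0$, hence $Ba = B$; so every such $a$ has a left inverse in $B$. Reducing modulo $F_{-e}A$ shows that every nonzero element of $E$ admits a left inverse, and the standard one-sided inverse trick (from $\bar b \bar a = 1$ and $\bar c \bar b = 1$ deduce $\bar a = \bar c$, whence $\bar a \bar b = 1$) upgrades this to a two-sided inverse, so $E$ is a skew field. For (b), the same mechanism applied to $a = 1 - z$ with $z \in F_{-e}A$ (noting $1 \notin F_{-e}A$, hence $1 - z \notin F_{-e}A$) yields a left inverse $u \in B$ of $1 - z$; Artinian simplicity of $A$ promotes $u$ to a two-sided inverse, and $(1-z)u \in B$ shows the inverse still lies in $B$. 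Thus $F_{-e}A \subseteq J(B)$, and combined with (a) this forces $J(B) = F_{-e}A$, so $B$ is local with residue skew field $E$.

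For (c), I scale a nonzero $a \in A$ by a suitable power of the uniformizer $\pi$ of $R = F_0K$ (a unit in $K \subset A$) so that $b := \pi^k a$ lies in $B \setminus \{0\}$. If $b \notin J(B) = F_{-e}A$ it is a unit of the local ring $B$; otherwise, by separatedness there is a maximal $n \geq 1$ with $b \in F_{-ne}A$, so $\bar b \in G(A)_{-ne}$ is nonzero. Because $FA$ is strong $e$-step by \propref{estep}, $G(A)$ is strongly $e\mathbb{Z}$-graded with $G(A)_0 = E$ a skew field, and a standard argument using $G(A)_{-ie}\cdot G(A)_{ie} = E$ then identifies each $G(A)_{ie}$ as a free rank-one $E$-module on either side. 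Consequently some $\bar c \in G(A)_{ne}$ satisfies $\bar b \bar c = 1$; any lift $c \in F_{ne}A$ gives $bc = 1 - z$ with $z \in F_{-e}A$, and multiplying by the $B$-inverse $u$ of $1 - z$ from (b) produces $b(cu) = 1$. So $b$ has a right inverse in $A$, and Artinian simplicity of $A$ makes $b$, and therefore $a$, a unit.

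The crux is step (a): the subgliders $F_{-n}A\cdot a$ are the precise vehicle by which the glider-theoretic irreducibility of $F^-B$ translates into the algebraic statement that elements of $B \setminus F_{-e}A$ admit left inverses. Once this is in hand, (b) and (c) proceed mechanically, the only substantial extra input being the rank-one structure of the graded components of $G(A)$ over the residue skew field.
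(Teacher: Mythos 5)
Your proof is correct, and it follows a genuinely different (and longer) route than the paper's.

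The paper argues by contradiction in one stroke: supposing $A$ is not a skew field, it fixes a proper left ideal $Av$ with $v \in F_nA \setminus F_{n-1}A$ and considers the \emph{single} subglider $F_{-n}v \supset F_{-n-1}v \supset \cdots$ of $F^-B$. Types $T_1/T_2$ are excluded (the chain can neither vanish nor stabilize), and type $T_3$ forces $F_{-n}v = F_{-m}A$ for some $m$, whence $F_0A = F_mF_{-m}A \subset F_{m-n}v \subset Av$, so $1 \in Av$ and $Av = A$ — a contradiction. Your proof uses the same family of subgliders (your $N_n = F_{-n}A\cdot a$ is exactly the paper's chain specialized to $a \in B$, i.e.\ $n=0$), but rather than applying it directly to a generator of a proper left ideal, you build up intermediate structure: (a) that $G(A)_0 = B/F_{-e}A$ is a skew field, (b) that $B$ is local with $J(B)=F_{-e}A$, and (c) that the strongly $e\mathbb{Z}$-graded ring $G(A)$ has rank-one homogeneous components over its degree-zero skew field, which then lets you produce a right inverse for any nonzero $b \in B$ and hence for any nonzero $a\in A$ after scaling by an element of $F_{-n}K$. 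The paper's argument is shorter and needs only the single subglider plus strongness; yours buys more intermediate structural information ($B$ local, graded components free of rank one over $E$, which is essentially a glider-theoretic reproof of the skew-field case in \cite[Theorem~2.3]{NVo1}), at the cost of invoking the crossed-product/strongly-graded machinery in step (c). Two small polish notes: your exclusion of type $T_1$ should explicitly use $B(N) \subset \bigcap_n F_{-n}A = 0$, which reduces $T_1$ to $T_2$ here; and in (c) you do not actually need $R = F_0K$ to be a DVR or to have a uniformizer — any nonzero $s \in F_{-n}K$ serves to scale $a$ into $B$, so the reliance on \propref{strongCSA} can be dropped.
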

\begin{proof}
Suppose that $A$ is not a skewfield. So there exists a proper left ideal $Av$. Since $FA$ is exhaustive and separated, $v \in F_n - F_{n-1}$ for some $n \in \mathbb{Z}$. The subglider $F_{-n}v \supset F_{-n-1}v \supset \ldots$ of $F^{-}B$ must be trivial. If it is trivial if type $T_1$, then $F_mv = 0$ for some $m \in \mathbb{Z}$. Since $0 \neq F_m \subset \Ann_A(Av)$, it follows that $F_0 = F_{-m}F_m \subset \Ann_A(Av)$, contradiction. If it is trivial of type $T_2$, then $F_mv = F_{m+1}v$ for some $m \in \mathbb{Z}$. By \propref{strongCSA} it follows that $Av = F_0v = F_{-n}v$ for all $n \in \mathbb{Z}$. In particular, $v \in B(FA) = 0$, contradiction. So the subglider must be trivial of type $T_3$, but in this case, $F_{-n}v = F_{-m}$ for some $m \in \mathbb{N}$. Hence $F_{m-n}v = F_0$, which shows that $Av = A$, contradiction. 
\end{proof}

\propref{strongCSA} also reveals a link with the theory of orders in simple algebras. Denote $R = F_0K$. We recall that a full $R$-lattice is a finitely generated $R$-torsion free module $M$ in $A$ such that $KM = A$. By definition, an $R$-order $C$ in $A$ is a subring of $A$ which is also a full $R$-lattice. If $C$ is not contained in any proper $R$-order $D$, we call $C$ a maximal order. We refer the reader to \cite{Rein} for a detailed treatment of the theory of maximal orders.
\begin{corollary}\corlabel{order}
In the situation of \propref{strongCSA}, $B = F_0A$ is an $R$-order in $A$.
\end{corollary}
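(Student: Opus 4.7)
To show $B = F_0A$ is an $R$-order in $A$, I would verify the three defining conditions in turn: (i) $B$ is a subring of $A$ containing $R$, (ii) $KB = A$ and $B$ is $R$-torsion-free, (iii) $B$ is finitely generated as an $R$-module. Condition (i) is immediate from the filtration axioms, as $B \cdot B \subseteq B$ and $R = F_0K \subseteq F_0A = B$. Torsion-freeness in (ii) follows from the embedding $B \hookrightarrow A$ into the $K$-vector space $A$, and $KB = A$ is a direct consequence of \propref{strongCSA}: the equalities $F_{nf}A = F_{nf}K\cdot B$ for all $n\in\mathbb{Z}$ combined with exhaustiveness of $FA$ yield $A = \bigcup_n F_{nf}A \subseteq K\cdot B \subseteq A$.

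The main obstacle is finite generation of $B$ over $R$. My strategy would be to prove every $b\in B$ is integral over $R$ and then appeal to the standard result from the theory of orders (see Reiner's \emph{Maximal Orders}) that a subring of a finite-dimensional central simple $K$-algebra containing $R$, spanning $A$ over $K$, and consisting of elements integral over $R$ is contained in a maximal $R$-order of $A$. Since maximal $R$-orders are finitely generated $R$-modules (Krull-Akizuki-type results apply, $R$ being a DVR by \propref{strongCSA}), Noetherianness of $R$ transfers finite generation down to $B$.

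To prove integrality of $b\in B$, I would restrict $FA$ to the commutative $K$-subalgebra $K[b] \subseteq A$, giving a separated, exhaustive filtration $FK[b]$ whose negative part is non-trivial (since $F_{-n}K \subseteq F_{-n}K[b]$ and $F_{-n}K\neq 0$ for all $n$ by hypothesis). Writing $K[b] \cong \prod_i L_i$ as a finite product of field extensions of $K$ (using that $K[b]$ is commutative Artinian) and projecting the filtration onto each $L_i$, I would invoke an analogue of \propref{GKdomain} in each factor to conclude that $F_0 L_i$ is a discrete valuation ring dominating $R$. The image of $b$ in each $F_0 L_i$ then satisfies a monic polynomial over $R$, so $b$ itself is integral over $R$. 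The delicate step I expect is justifying \propref{GKdomain} — or a suitable replacement — for each extension $L_i/K$: irreducibility of the negative-part glider was what drove the original proposition, and this irreducibility needs to be transferred from $F^-B$ to each simple factor of $K[b]$, possibly via a short separate lemma.
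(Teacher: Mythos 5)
Your reduction of the corollary to finite generation is sound: the verification that $B$ is a subring containing $R$, is $R$-torsion-free, and satisfies $KB=A$ (from $F_{nf}A=F_{nf}KF_0A$ together with exhaustiveness) is exactly right, and the strategy of proving every $b\in B$ integral over $R$ and then citing Reiner's theorem that an integral subring spanning $A$ over a Noetherian integrally closed domain is an $R$-order is in principle a legitimate alternative route, since \propref{strongCSA} makes $R$ a DVR. The problem is that your integrality argument has two genuine gaps. First, the passage to $K[b]$ is not controlled by the hypotheses: irreducibility of $F^{-}B$ as an $FA$-glider gives no information about the induced filtration on the commutative subalgebra $K[b]$, because subgliders of the negative part of $FK[b]$ are chains of $F_0K[b]$-modules rather than $F_0A$-modules, so neither lattice of subgliders embeds in the other; moreover $K[b]\cong K[x]/(p(x))$ is a product of fields only when the minimal polynomial $p$ is squarefree, so the decomposition $\prod_i L_i$ already fails for non-semisimple $b$. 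Second, and more seriously, even granting that each $F_0L_i$ is a discrete valuation ring of $L_i$ dominating $R$, membership of $b$ in a \emph{single} such valuation ring does not imply integrality over $R$: the integral closure of $R$ in $L_i$ is the intersection of \emph{all} valuation rings of $L_i$ lying over $R$, and any one of them (a localization of that integral closure at one of possibly several maximal ideals) contains elements that are not integral over $R$. So the step ``the image of $b$ in $F_0L_i$ satisfies a monic polynomial over $R$'' is false as stated, and the whole integrality claim is left unproved.

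The paper sidesteps integrality entirely. It observes that $B$ is a prime ring (if $I,J$ are nonzero ideals of $B$ then $KI=KJ=A$ because $KB=A$ is simple, whence $IJ\neq 0$), that $B$ is a PI ring as a subring of the csa $A$, and that $Z(B)=F_0K=R$ is a DVR, hence Noetherian; Formanek's theorem \cite{For} for prime PI rings with Noetherian center then yields that $B$ is a finitely generated $Z(B)=R$-module in one stroke. If you wish to keep your outline, the cleanest repair is to substitute this PI-theoretic argument for the valuation-theoretic integrality claim; as it stands, the delicate step you flagged at the end is not merely delicate but is where the proof breaks.
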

\begin{proof}
 $B$ is prime since $BK = A$ is a csa and $Z(B) = F_0K = R$ is a DVR hence a Noetherian ring. Moreover, $B$ is a P.I. ring as a subring of a P.I. ring $A$, hence by a result of Formanek, see \cite[Theorem 2]{For}: $B$ is a finitely generated $Z(B)$-module
\end{proof}

Suppose that $R = F_0K$ is a Dedekind domain and $B$ is a maximal $R$-order in $A$. A prime ideal of $B$ is by definition a proper two-sided ideal $P$ in $B$ such that $KP = A$ and such that for every pair of two-sided ideals $S,T$ in $B$ and containing $P$, we have
$$ST \subset P \Rightarrow S \subset P {\rm~or~} T \subset P.$$
In fact, the prime ideals of $B$ coincide with the maximal two-sided ideals of $B$ and if $P$ is such a prime ideal, then $P \cap R \in \Spec(R)$. Vice-versa, for $p \in \Spec(R)$, $P = B \cap \rad(B_p)$ is a prime ideal of $B$, and this yields a one-to-one correspondence. Also, the product of prime ideals in $B$ is commutative and every two-sided ideal of $B$ can be written uniquely as a product of prime ideals. Since $R$ is Dedekind, there are only a finite number of prime (hence maximal) ideals $p_1, \ldots, p_n$. If $P_i$  corresponds to $p_i$, then 
$$p_iB = P_i^{e_i}$$
for some $e_i > 0$, and we call $e_i$ the ramification index at $P_i$. 

\begin{lemma}\lemlabel{fieldjac}
Let $FK$ be an exhaustive, separated filtration, then $F_{-1}K \subset J(F_0K)$.
\end{lemma}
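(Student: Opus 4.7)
The plan is to verify the standard characterization of the Jacobson radical of a commutative ring $R$: an element $x$ lies in $J(R)$ iff $1-rx$ is a unit of $R$ for every $r \in R$. Since $F_0K \cdot F_{-1}K \subset F_{-1}K$, any $rx$ with $r \in F_0K$ and $x \in F_{-1}K$ again lies in $F_{-1}K$, so the task reduces to showing that $1-y$ is invertible in $F_0K$ for every $y \in F_{-1}K$.

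Because $K$ is a field and $1-y \neq 0$, the inverse $z = (1-y)^{-1}$ exists in $K$; the key relation to iterate is the rewriting $z = 1 + yz$ of $(1-y)z = 1$. I would then argue by contradiction: assume $z \notin F_0K$ and let $n_0$ be the smallest integer with $z \in F_{n_0}K$. Under the standing assumption this forces $n_0 \geq 1$, so $yz \in F_{-1}K \cdot F_{n_0}K \subset F_{n_0-1}K$ while also $1 \in F_0K \subset F_{n_0-1}K$. Hence $z = 1 + yz \in F_{n_0-1}K$, contradicting the minimality of $n_0$, and therefore $z \in F_0K$.

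The only slightly delicate point is justifying the existence of the minimal index $n_0$, and this is exactly where both hypotheses on $FK$ enter: exhaustiveness guarantees that $z$ lies in some $F_nK$, while separatedness (together with $z \neq 0$, which follows because $(1-y)\cdot 0 \neq 1$) guarantees that not every integer works, so the set $\{n : z \in F_nK\}$ is a proper upward-closed subset of $\mathbb{Z}$ and thus has a least element. Beyond that the argument is a one-step telescoping of $z = 1 + yz$, so no serious obstacle is expected.
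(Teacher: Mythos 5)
Your proof is correct and takes a genuinely different route from the paper's. The paper shows $F_{-1}K$ lies in every maximal ideal $N$ of $F_0K$: take a nonzero $x \in N$; exhaustiveness provides a minimal $n \geq 1$ with $x^{-1} \in F_nK$, so $F_{-1}^n x^{-1} \subset F_{-n}KF_nK \subset F_0K$, i.e. $F_{-1}^n \subset F_0Kx \subset N$, and primeness of $N$ then gives $F_{-1}K \subset N$. You instead use the equivalent characterization of $J(F_0K)$ via quasi-regularity of $1 - y$, and run a descent on the filtration degree of $z = (1-y)^{-1}$ through the relation $z = 1 + yz$. Both arguments use exhaustiveness and separatedness in exactly the same roles (to place an inverse somewhere in the filtration and to make the minimal degree well defined), but yours is somewhat more self-contained, avoiding any appeal to maximal/prime ideals or to the implication $I^n \subset P \Rightarrow I \subset P$. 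The one step you leave unjustified is the assertion $1 - y \neq 0$; you should note that $1 \in F_{-1}K$ would force $F_0K = F_{-1}K$, hence $F_0K = F_{-n}K$ for all $n \geq 0$ by induction, contradicting separatedness, so indeed $1 \notin F_{-1}K$ and the inverse in $K$ exists.
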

\begin{proof}
Let $N \in \Max(F_0K)$ and let $x \in N$. Since $FK$ is exhaustive there exists $n > 0$ such that $x^{-1} \in F_nK - F_{n-1}K$. Hence $F_{-1}^nx^{-1} \subset F_0$ and it follows that $F_{-1}^n \subset (x) \subset N$. Since $N$ is prime, it follows that $F_{-1} \subset N$ and the result follows.
\end{proof}

\begin{lemma}\lemlabel{Ded}
Let $R$ be Dedekind with quotient field $K$ and $B$ be a maximal $R$-order in $A$. If $P \neq Q$ are prime ideals of $B$, then 
$$PQ \cap K = (P \cap K) \cap (Q \cap K) = (P \cap R) \cap (Q \cap R).$$
\end{lemma}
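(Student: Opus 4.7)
The plan is to reduce every intersection appearing in the identity to a statement about prime ideals of the Dedekind ring $R$, and then invoke the classical fact that distinct maximal ideals are coprime. First I would verify the ``set-theoretic'' identity $B \cap K = R$. Since $B$ is a full $R$-lattice, it is finitely generated as an $R$-module; because $R$ is Noetherian, any $x \in B \cap K$ generates a finitely generated $R$-submodule $R[x]$ of $B$, hence $x$ is integral over $R$. As $R$ is Dedekind and therefore integrally closed in $K$, this forces $x \in R$. Consequently, for any prime ideal $P$ of $B$ one has $P \cap K = P \cap R$, which is a (nonzero) prime of $R$ by the correspondence recalled just above the lemma.

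Next I would use the one-to-one correspondence $P \leftrightarrow P \cap R$ between prime ideals of $B$ and nonzero primes of $R$. From $P \neq Q$ it follows that $p := P \cap R$ and $q := Q \cap R$ are distinct nonzero primes of $R$; since $R$ is Dedekind these are distinct maximal ideals, hence coprime, so $p \cap q = pq$. Combined with the previous paragraph this already yields the second equality of the lemma:
$$(P \cap K) \cap (Q \cap K) \;=\; p \cap q \;=\; (P \cap R) \cap (Q \cap R).$$

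For the first equality I would argue by double inclusion. The containment $pq \subset PQ \cap K$ is immediate: $p \subset P$ and $q \subset Q$ give $pq \subset PQ$, while trivially $pq \subset R \subset K$. For the reverse inclusion, $PQ \subset P \cap Q$ together with $B \cap K = R$ yields
$$PQ \cap K \;\subset\; (P \cap Q) \cap K \;=\; (P \cap K) \cap (Q \cap K) \;=\; pq,$$
closing the loop.

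There is no genuine obstacle here; the argument is really just bookkeeping. The one mildly nontrivial point is the identity $B \cap K = R$, which quietly uses both that $R$ is Noetherian (so that $R[x]$ is finitely generated and integrality follows) and that $R$ is integrally closed (to conclude $x \in R$). Once this is granted, the lemma is the conjunction of the bijective prime correspondence recalled before the statement and the coprimality of distinct maximal ideals in a Dedekind domain.
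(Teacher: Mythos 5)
Your proof is correct and follows the same route the paper's one-line proof is gesturing at: reduce intersections with $K$ to intersections with $R$ via $B\cap K=R$, invoke the prime correspondence $P\leftrightarrow P\cap R$, and use unique factorization of ideals (equivalently, coprimality of distinct maximal ideals) in the Dedekind domain to identify $p\cap q$ with $pq$. You merely fill in the details the paper leaves implicit, notably the integrality argument for $B\cap K=R$ and the two-sided sandwich $pq\subset PQ\cap K\subset (P\cap Q)\cap K = p\cap q = pq$.
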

\begin{proof}
Follows since $PQ \cap K \subset (P \cap R) \cap (Q \cap R)$, by the correspondence of prime ideals and the fact that every two-sided ideal can be written uniquely as the product of prime ideals.
\end{proof}

We denote the ceil function by $\ceil*{}$.
\begin{lemma}\lemlabel{ceil}
Let $e > 0$ and $k,l \in \mathbb{Z}$. We have the inequality
$$\ceil*{\frac{k}{e}} + \ceil*{\frac{l}{e}} \geq \ceil*{\frac{k+l}{e}},$$
and we have strict inequality if and only if
$$k = k_ee + i, ~0 < i \leq e-1,$$
$$ l = l_e  e + j,~ 0 < j \leq e-1$$
and $2 \leq i + j \leq e$.
\end{lemma}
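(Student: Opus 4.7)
The plan is to reduce everything to Euclidean division modulo $e$. Write $k = k_e e + i$ and $l = l_e e + j$ with the unique residues $i, j \in \{0, 1, \ldots, e-1\}$; this is consistent with the statement, where $0 < i$ simply expresses $e \nmid k$ (and likewise for $j$). Then $\ceil*{k/e} = k_e + \epsilon_i$, with $\epsilon_0 = 0$ and $\epsilon_r = 1$ for $r \geq 1$, and similarly $\ceil*{l/e} = l_e + \epsilon_j$. So the left-hand side of the claimed inequality equals $k_e + l_e + \epsilon_i + \epsilon_j$.

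Next I would evaluate the right-hand side by looking at $k+l = (k_e+l_e)e + (i+j)$, where $0 \leq i+j \leq 2e-2$. There are three ranges to consider: if $0 \leq i+j \leq e-1$, then $\ceil*{(k+l)/e} = k_e+l_e+\epsilon_{i+j}$; if $i+j = e$, then $k+l$ is a multiple of $e$ and $\ceil*{(k+l)/e} = k_e+l_e+1$; if $e+1 \leq i+j \leq 2e-2$, then $k+l = (k_e+l_e+1)e + (i+j-e)$ with $0 < i+j-e \leq e-2$, giving $\ceil*{(k+l)/e} = k_e+l_e+2$.

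Now I would run through the resulting sub-cases according to which of $i, j$ vanish, and (when both are non-zero) in which range $i+j$ lies. If $i=0$ or $j=0$, the two sides match directly. If both are positive and $i+j \geq e+1$, both sides equal $k_e+l_e+2$. The strict inequality appears in the remaining possibilities: $i, j \geq 1$ together with either $i+j \leq e-1$ or $i+j = e$, i.e.\ exactly $1 \leq i \leq e-1$, $1 \leq j \leq e-1$ and $i+j \leq e$ (the automatic bound $i+j \geq 2$ accounts for the lower bound in the statement). In each of these cases the difference LHS $-$ RHS turns out to be precisely $1$, which in particular proves the general inequality.

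There is no real obstacle: the lemma is a finite combinatorial check on residues mod $e$, and the only mild bookkeeping concern is separating the subcases where $e$ divides one of $k$, $l$, $k+l$; this is handled cleanly by the indicator $\epsilon$ above.
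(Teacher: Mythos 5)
Your proposal is correct and follows essentially the same route as the paper: both write $k=k_ee+i$, $l=l_ee+j$ with residues in $\{0,\dots,e-1\}$ and use $\ceil*{k/e}=k_e+1-\delta_{0,i}$ (your $\epsilon_i$ is exactly $1-\delta_{0,i}$). You merely carry out in full the case analysis on $i+j$ that the paper leaves as "follows easily," and the resulting characterization of strict inequality matches the statement.
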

\begin{proof}
By writing $k = k_e e +i, l = l_ee+j$ with $0 \leq i,j \leq e-1$, we have 
$$\ceil*{\frac{k}{e}} = k_e + 1 - \delta_{0,i}, \quad \ceil*{\frac{l}{e}} = l_e + 1 - \delta_{0,j}.$$
The statements now follow easily.
\end{proof}

\begin{theorem}\thelabel{maxorder}
Let $A$ be a CSA over $K$ with filtration $FA$ such that $F_0A = B$ is a maximal $R = F_0K$-order, with $R$ a Dedekind domain. Then $FA$ is strong if and only if 
$$F_{-1}A = P_1^{k_1}\ldots P_{n}^{k_n},$$
with $e_i | k_i$ for all $1 \leq i \leq n$.
\end{theorem}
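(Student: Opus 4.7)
My plan is to characterize strongness of $FA$ by passing to the induced filtration $FK = FA\cap K$ on the center, using that $F_{-1}A$ admits a unique prime factorization in the maximal order $B$ and then translating the strongness identity into a divisibility condition via the ceiling-function machinery of \lemref{ceil}. The filtration axioms force $F_{-1}A$ to be a two-sided ideal of $B$, so by the preliminary discussion on prime ideals in maximal $R$-orders we may write $F_{-1}A = P_1^{k_1}\cdots P_n^{k_n}$ uniquely, and the theorem reduces to characterizing strongness in terms of the divisibilities $e_i\mid k_i$.

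For the implication ($\Leftarrow$), if $k_i = e_i m_i$ for every $i$, then $p_iB = P_i^{e_i}$ gives $F_{-1}A = \mathfrak{a}B$ with $\mathfrak{a} = \prod_i p_i^{m_i}$ an ordinary ideal of the Dedekind domain $R$. Invertibility of $\mathfrak{a}$ as a fractional $R$-ideal makes $F_1A := \mathfrak{a}^{-1}B$ a valid degree-one part satisfying $F_1A\cdot F_{-1}A = \mathfrak{a}^{-1}\mathfrak{a}B = B = F_0A$, and symmetrically on the other side; so $FA$ is strong.

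For the implication ($\Rightarrow$), strongness forces $F_1A = (F_{-1}A)^{-1} = \prod_i P_i^{-k_i}$ in the Brandt groupoid of fractional two-sided $B$-ideals. I would then compute the induced $FK$ explicitly: localizing at each $p_i$, the facts that $P_iB_{p_i}$ is the unique maximal two-sided ideal of the local maximal order $B_{p_i}$ and $(P_iB_{p_i})^{e_i} = p_iB_{p_i}$, together with \lemref{Ded}, yield the key formula $P_i^k\cap R = p_i^{\ceil*{k/e_i}}$ for every $k\in\mathbb{Z}$, and hence
$$F_{\pm 1}K = \prod_i p_i^{\ceil*{\pm k_i/e_i}}.$$
The strongness of $FA$ then passes to the identity $F_1K\cdot F_{-1}K = F_0K = R$ on the center, which rewrites as $\prod_i p_i^{\ceil*{k_i/e_i}+\ceil*{-k_i/e_i}} = R$. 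Applying the strict-inequality criterion of \lemref{ceil} with $k = k_i$ and $l = -k_i$, the exponents all vanish if and only if $e_i\mid k_i$ for every $i$.

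The main obstacle is justifying the transfer of the strongness identity through intersection with $K$: this is not formal from the filtration axioms on $A$ alone and genuinely rests on the maximality of $B$ as an $R$-order, which pins down the fractional two-sided ideal structure tightly enough that the restriction of the product $F_1A\cdot F_{-1}A = B$ to the center agrees with the product $F_1K\cdot F_{-1}K$. Once this step is in hand, the identification of the divisibility condition reduces to the bookkeeping already packaged in \lemref{ceil} and \lemref{Ded}.
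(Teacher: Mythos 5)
Your computational core coincides with the paper's: factor $F_{-1}A = P_1^{k_1}\cdots P_n^{k_n}$, use \lemref{fieldjac} and \lemref{Ded} to obtain $F_{-1}K = p_1^{\ceil*{k_1/e_1}}\cdots p_n^{\ceil*{k_n/e_n}}$, and finish with the ceiling arithmetic of \lemref{ceil}. (The paper's converse runs through the identity $m\ceil*{k_1/e_1}=\ceil*{mk_1/e_1}$ for all $m$ rather than your $(k,-k)$ instance of \lemref{ceil}; that difference is cosmetic.)

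The genuine gap is exactly the step you flag as ``the main obstacle,'' and it cannot be closed along the route you sketch. Note first that the paper's own proof never argues about strongness of $FA$: both directions are phrased for the induced filtration $FK$ (``It follows that $FK$ is strong,'' ``suppose that $FK$ is strong''), so the theorem is really a statement about $FK$, with $F_{-m}A=(F_{-1}A)^m$ implicitly in force. Your forward direction needs the implication ``$F_1A\cdot F_{-1}A=B$ forces $F_1K\cdot F_{-1}K=R$,'' but by your own formulas $F_1K\cdot F_{-1}K=\prod_i p_i^{\ceil*{k_i/e_i}+\ceil*{-k_i/e_i}}$, and this equals $R$ precisely when $e_i\mid k_i$ for all $i$: the transfer you need is logically equivalent to the conclusion, so invoking it is circular. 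Worse, it is not a consequence of strongness of $FA$: putting $F_mA=\prod_iP_i^{-mk_i}$ for all $m\in\mathbb{Z}$ (with all $k_i>0$) yields a separated, exhaustive, unbounded strong filtration with $F_0A=B$ for arbitrary exponents, yet $F_1K\cdot F_{-1}K\subsetneq R$ whenever some $e_i\nmid k_i$. Symmetrically, your ($\Leftarrow$) direction silently replaces the given $F_1A$ by $\mathfrak{a}^{-1}B$; since $FA$ is fixed in the statement, this shows at most that \emph{some} strong filtration with the prescribed $F_0A\supset F_{-1}A$ exists, not that the given one is strong (a filtration whose positive part grows more slowly than $\mathfrak{a}^{-m}B$ satisfies $e_i\mid k_i$ without being strong). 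To prove what the paper actually proves, you should carry out both directions for the induced filtration $FK$, using $F_{-m}K=(F_{-1}A)^m\cap K=\prod_ip_i^{\ceil*{mk_i/e_i}}$ and comparing with $(F_{-1}K)^m$, rather than trying to pass through strongness of $FA$.
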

\begin{proof}
By the theory of maximal orders, $F_{-1}A = P_1^{k_1}\ldots P_{n}^{k_n}$ for some $k_i \geq 0$. \lemref{fieldjac} and \lemref{Ded} entail that
$$F_{-1}K = p_1^{\ceil*{\frac{k_1}{e_1}}}\ldots p_n^{\ceil*{\frac{k_n}{e_n}}}.$$
If $e_i | k_i$ for all $1 \leq i \leq n$, then $m \ceil*{\frac{k_i}{e_i}} = \ceil*{\frac{mk_i}{e_i}}$ for all $1 \leq i \leq n$ and $m \in \mathbb{Z}$. It follows that $FK$ is strong. Conversely, suppose that $FK$ is strong, but that $\ceil*{\frac{k_1}{e_1}} > \frac{k_1}{e_1}$, i.e. $k_1 = le_1 + j$ for some $0 < j < e_1$. Since $FK$ is strong, we have for every $m$
$$m \ceil*{\frac{k_1}{e_1}} = \ceil*{\frac{mk_1}{e_1}},$$
i.e. $(m-1)e_1 < mj < me_1 - m$. It follows that for every $m \in \mathbb{Z}$
$$\frac{m-1}{m}e_1 < j \leq e_1 - 1,$$
which is a contradiction. 
\end{proof}

\section{The Glider-Brauer-Severi variety of a field}\seclabel{GBSfield}

Let $A$ be a central simple algebra of degree $n^2$ over a field $K$. For every $1 \leq r \leq n$ one may define the (generalized) Brauer-Severi variety $\BS_r(A)$ as the variety of left ideals of reduced dimension $r$ in $A$, see \cite{involutions}. Such an ideal is represented by a non-zero $rn$-vector $u_1 \wedge \ldots \wedge u_{rn} \in \bigwedge^{rn} A$. If $(e_i)_{1 \leq i \leq n^2}$ denotes a basis of $A$, then the $rn$-dimensional subspace represented by $u_1 \wedge \ldots \wedge u_{rn} \in \bigwedge^{rn} A$ is a left ideal of reduced dimension $r$ if and only if it is preserved under left multiplication by $e_1,\ldots, e_{n^2}$. When $r=1$, we obtain the classical Brauer-Severi variety $\BS(A)$ and we see that 
$$\BS(A) = \{ L \leq A {\rm ~irreducible~left~ideal}\}.$$
If we consider the set of all left ideals of $A$ then we obtain 
$$\sqcup_{r=1}^n \BS_r(A).$$

\begin{definition}
Let $FR$ be a filtered ring with subring $F_0R = S$. We call $L \subset R$ a left glider ideal if $L$ is a left $FR$-glider, in particular $L$ is a filtered left $F^-S$-module.
\end{definition}

\begin{definition}
Let $A$ be a $K$-algebra. For a filtration $FA$ on $A$, we define the glider-Brauer-Severi variety associated to the filtration $FA$ 
$$\GBS_F(A) = \{{\rm irreducible~left~} FA{\rm-glider~ ideals ~of ~} A\}$$
And more generally, we define the glider-Brauer-Severi variety 
$$\GBS(A) = \bigcup_{FA {\rm~filtration}} \GBS_F(A).$$
\end{definition}

As mentioned in the introduction, the theory of irreducible gliders highly depends on the type of filtrations one is working with. For example, when working with right bounded filtrations, we recall

\begin{lemma}\cite[Lemma 2.5]{CVo1}\lemlabel{el}
Let $M$ be a (weakly) irreducible $FA$-glider such that $M \neq B(M)$, then there is an $e \in \mathbb{N}$ such that $M_e \neq B(M)$ and $e$ is maximal as such. For this $e$, we have that $M_i = F_{e-i}AM_e$, for $0 \leq i \leq e$.
\end{lemma}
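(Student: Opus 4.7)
The plan is to exhibit an auxiliary sub-glider $N$ of $M$ built entirely from $M_e$, then invoke weak irreducibility to force $N$ to coincide with $M$ in degrees $0,1,\ldots,e$.

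First I would establish existence of a maximal $e$. Since $M_0 = M \neq B(M)$, the set $S = \{n \in \mathbb{N} : M_n \neq B(M)\}$ is non-empty; the content of this part of the lemma is that $S$ is bounded and hence has a maximum. I would argue this by showing $M$ has finite essential length: if the chain $M_0 \supset M_1 \supset \ldots$ strictly descended past $B(M)$ infinitely often, one could peel off a shifted tail of the filtration on $M$ to manufacture a proper sub-glider of $M$ fitting none of the three trivial templates $T_1, T_2, T_3$, contradicting weak irreducibility. The exact formulation of this step depends on the definition of weak irreducibility from \cite{CVo1}; I expect this is the main obstacle, since the finer distinctions between $B(M)$, essential length, and the weak vs.\ strong versions of irreducibility all enter here.

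With such a maximal $e$ in hand, define $N_i := F_{e-i}A \cdot M_e$ for every $i \geq 0$; for $i > e$ the subscript $e-i$ is negative, but $F_{e-i}A$ is still a subgroup of $F_0A$. Two verifications are then routine: the glider axiom $F_kAM_\ell \subset M_{\ell - k}$, applied with $\ell = e$ and $k = e-i$, gives $N_i \subset M_i$, and the filtration identity $F_iA F_{e-j}A \subset F_{e-j+i}A$ shows that $F_iA N_j \subset N_{j-i}$ for all $0 \leq i \leq j$. Thus $N$ is genuinely a sub-glider of $M$, with $N_e = F_0A \cdot M_e = M_e$.

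I would then apply weak irreducibility to $N$: either $N = M$, in which case $M_i = N_i = F_{e-i}A\cdot M_e$ and we are done, or $N$ is trivial of some type. Triviality of type $T_1$ or $T_2$ at index $e$ is immediately impossible because $N_e = M_e$ equals neither $B(M) = B(N)$ (by maximality of $e$) nor $0$; at a smaller index it is ruled out analogously via the descent $N_0 \supset \ldots \supset N_e$. Hence $N$ must be trivial of type $T_3$, so there is a strictly monotone $\alpha: \mathbb{N} \to \mathbb{N}$ with $\alpha(n) - m \geq \alpha(n-m)$ and $N_i = M_{\alpha(i)}$. The equality $N_e = M_e$ forces $\alpha(e) = e$; substituting $n = m = e$ in the displacement inequality gives $\alpha(0) \leq 0$, and monotonicity then forces $\alpha(0) = 0$. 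Strict monotonicity squeezed between $\alpha(0) = 0$ and $\alpha(e) = e$ over $e+1$ consecutive integer values compels $\alpha(i) = i$ for all $0 \leq i \leq e$, and $M_i = N_i = F_{e-i}A \cdot M_e$ as claimed.
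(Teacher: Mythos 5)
First, note that the paper does not actually prove this lemma --- it is quoted verbatim from \cite[Lemma 2.5]{CVo1} --- so there is no in-paper proof to compare against and your argument has to stand on its own. The second half of your argument, producing the identity $M_i = F_{e-i}AM_e$ once the maximal $e$ is granted, is essentially the standard one and is correct up to two small glosses. The equality $N_e=M_e$ does not by itself force $\alpha(e)=e$: it only gives $M_{\alpha(e)}=M_e$, and strict monotonicity only gives $\alpha(e)\ge e$; you need the maximality of $e$ here, since $\alpha(e)>e$ would yield $M_e=M_{\alpha(e)}\subset M_{e+1}=B(M)$, contradicting $M_e\neq B(M)$. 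Likewise $B(N)=B(M)$ is not automatic (only $B(N)\subset B(M)$ is), but to exclude $T_1$ you only need $M_e\neq B(N)$, which follows from $B(N)\subset B(M)\subsetneq M_e$; and for $T_2$ you should also look at indices $n>e$, where $N_n=F_{e-n}AM_e$ involves negative filtration degrees --- harmless in the right-bounded, positively filtered setting of \cite{CVo1}, but worth saying.

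The genuine gap is the first assertion: the existence of a maximal $e$ with $M_e\neq B(M)$, i.e. that the chain reaches its body after finitely many steps. The mechanism you propose --- peel off a shifted tail and argue it fits none of the three trivial templates --- fails, because the shifted tail $N_i:=M_{i+c}$ (any $c\ge 0$) is \emph{always} a trivial subglider of type $T_3$, via $\alpha(n)=n+c$, which satisfies $\alpha(n)-m=n+c-m\ge \alpha(n-m)$ with equality. So no contradiction can be extracted from it, and irreducibility together with the $T_1$--$T_3$ triviality notions as recalled here does not obviously bound the essential length; this is exactly where the right-boundedness of $FA$ assumed in \cite{CVo1} (cf.\ the sentence introducing the lemma, ``when working with right bounded filtrations'') has to enter. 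You flagged this step as uncertain, and rightly so: as written it does not prove the first claim of the lemma.
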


In particular, irreducible gliders are of finite essential length $d$ and if they have zero body, their structure is determined by the simple $F_0A$-module $M_d$. We also recall from \cite[Lemma 2.1]{CVo2} that whenever $M\supset M_1 \supset \ldots$ is irreducible, then so is $M_m \supset M_{m+1} \supset \ldots$ for any $m \geq 0$.\\

For the non-bounded separated, exhaustive filtrations on central simple algebras, the question how irreducible gliders look like, has not been answered up to now. The following lemma will be useful for tackling this problem

\begin{lemma}\lemlabel{irrinfinite}
If $M$ is irreducible of infinite essential length, then for all $i \geq 0$ either $M_i = M_{i+1}$ or $M_i/M_{i+1}$ is an irreducible left $F_0A$-module.
\end{lemma}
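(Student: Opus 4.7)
The plan is to prove the lemma by contradiction. Suppose $M$ is irreducible of infinite essential length and $M_i \supsetneq M_{i+1}$ for some $i \geq 0$, but $M_i/M_{i+1}$ is not a simple left $B$-module, where $B = F_0A$. Then there exists a $B$-submodule $N$ with $M_i \supsetneq N \supsetneq M_{i+1}$, and the goal is to construct a non-trivial subglider of $M_\bullet$, contradicting irreducibility.

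The natural candidate is the chain $L_\bullet$ defined by $L_j = M_j$ for $j \neq i$ and $L_i = N$. A direct verification shows that the glider axiom $F_kA\cdot L_j \subseteq L_{j-k}$ (for $k \leq j$) is automatic except when $j - k = i$, in which case it requires $N$ to contain $F_{j-i}A\cdot M_j$ for every $j \neq i$. Equivalently, $N \supseteq Q$ where $Q := \sum_{j \neq i} F_{j-i}A\cdot M_j \subseteq M_i$ is a $B$-submodule that already contains $M_{i+1}$ (since $1 \in F_0A \subseteq F_1A$, so $F_1A\cdot M_{i+1} \supseteq M_{i+1}$). Provided $Q \subsetneq M_i$, I replace $N$ by $N' := N + Q$, which still lies strictly between $M_{i+1}$ and $M_i$, and then $L_\bullet$ is a valid subglider of $M_\bullet$.

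Next I rule out all three triviality types for $L_\bullet$, using that infinite essential length of $M_\bullet$ forces $M_n \neq 0$ and $M_n \neq B(M)$ for every $n \geq 0$ (since stabilisation or vanishing at any finite index would give finite essential length). Type $T_3$ fails because $L_i = N'$ lies strictly between consecutive terms $M_i$ and $M_{i+1}$ of a totally ordered chain, so $N'$ coincides with no $M_m$. Type $T_2$ fails because $L_j = M_j \neq 0$ for $j \neq i$ and $L_i = N' \supsetneq M_{i+1} \neq 0$. For type $T_1$ I compute $B(L) = \bigcap_j L_j = B(M) \cap N'$: if $L_n = B(L)$ for some $n$, then either $n \neq i$ and $M_n = B(M) \cap N' \subseteq B(M)$ forces $M_n = B(M)$ (contradicting infinite essential length), or $n = i$ and $N' = B(M) \cap N' \subseteq B(M) \subseteq M_{i+1}$ contradicts $N' \supsetneq M_{i+1}$. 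Hence $L_\bullet$ is a non-trivial subglider of $M_\bullet$, contradicting irreducibility.

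The hardest part is the residual case $Q = M_i$, in which the naive construction collapses ($N \supseteq Q = M_i$ would force $N = M_i$). To close this gap I would select a second index $i' \neq i$ with $M_{i'} \supsetneq M_{i'+1}$, available by infinite essential length, and modify $L_\bullet$ simultaneously at positions $i$ and $i'$, solving the coupled system of constraints on $L_i = N$ and $L_{i'} = N^{(i')}$ that arise from the axioms at indices straddling both. The body-based arguments above transfer to rule out $T_1$ and $T_2$ for the refined subglider; the delicate point is excluding $T_3$, which I expect to follow by extracting the structural identity $M_i = F_{-1}A\cdot M_{i-1} + F_1A\cdot M_{i+1}$ implicit in $Q = M_i$ and showing it is incompatible with the assumed proper intermediate $B$-submodule $N$.
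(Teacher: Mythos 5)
Your approach diverges from the paper's, and the divergence is where the gap appears. The paper does not try to modify the full chain $M_\bullet$ at a single level; instead it uses the fact, recalled just above the lemma from \cite[Lemma 2.1]{CVo2}, that the shifted glider $M_i \supset M_{i+1} \supset \ldots$ is again irreducible. It then takes the subglider \emph{generated} by $N$, namely $N \supset F_{-1}A\cdot N \supset F_{-2}A\cdot N \supset \ldots$, which is automatically a subglider of $M_i \supset M_{i+1} \supset \ldots$ (no ``$Q$-correction'' needed, since $F_kA\cdot(F_{-n}A\, N)\subset F_{-(n-k)}A\, N$ is built in) and is non-trivial because $N$ strictly interposes between $M_i$ and $M_{i+1}$. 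This sidesteps entirely the coupled-constraints problem you run into.

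Your construction of $L_\bullet$ inside the full glider $M_\bullet$ is valid only after the $Q$-correction, and you correctly identify that it collapses when $Q = M_i$. But your treatment of that residual case is a sketch, not a proof: you propose to modify $L_\bullet$ simultaneously at a second index $i'$ and ``solve the coupled system of constraints,'' and you say you ``expect'' the exclusion of type $T_3$ to follow from a structural identity you extract from $Q = M_i$. None of this is carried out, and it is not clear it can be: if $Q = M_i$ holds at every index of strict descent, the two-index modification faces the same obstruction, and there is no obvious way to terminate the recursion. Since $Q = M_i$ is a genuinely possible configuration (compare the equality $M_i = F_{e-i}A\,M_e$ from \lemref{el} in the finite essential length setting), this is a real gap. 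The fix is to replace your single-level modification by the paper's move: pass to the irreducible truncation $M_i \supset M_{i+1} \supset \ldots$ and take the canonical subglider generated by $N$, after which your $T_1$/$T_2$/$T_3$ exclusions go through cleanly without any correction term.
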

\begin{proof}
Suppose that $M_i > N > M_{i+1}$ is a proper left $F_0A$-module. Then 
$$\begin{array}{ccccccccccc}
 M_i & \supsetneq & M_{i+1} & \supset & M_{i+2} & \supset & \ldots \\
\cup  &&\cup&& \cup &&\\
 N & \supset & F_{-1}N& \supset & F_{-2}N & \supset & \ldots
\end{array}$$
would be a non-trivial subglider of the irreducible glider $M_i \supset M_{i+1} \supset \ldots$.
\end{proof}

Throughout the rest of this section, we focus on $A = K$ and determine its Glider-Brauer-Severi variety $\GBS(K)$. For non-bounded filtrations, it appears that irreducible gliders have infinite essential length. Indeed, $K$ being a field implies that an irreducible left glider ideal $M$ has zero body. So if $\el(M)= d$, then $F_{-d-1}KM = 0$, contradiction. Suppose that $M \in \GBS_F(K)$. \lemref{irrinfinite} entails that there exists $n \in \mathbb{Z}, i \in \mathbb{N}$ such that $M_i = F_nK$. Indeed, consider on $KM = K$ the filtration given by $F_{-n}KM = M_n, F_n(KM) = \sum_{j \geq 0}F_{j+n}M_j$ for $n \geq 0$. Suppose first that $1 \in M_i \setminus M_{i+1}$. Then $F_iK \subset M$. The submodule
$$\frac{F_iK}{F_iK \cap M_{1}} \leq \frac{M}{M_{1}}$$
cannot be zero, whence $F_iK  = M$. If $1 \in F_n(KM) - F_{n-1}(KM)$ for some $n > 0$, then we can write
$$1 = \sum f_{j+n}m_j$$
for some $f_{j+n} \in F_{j+n}K, m_j \in M_j$. In this case, $F_{-n}K \subset M$, and since $B(M) = 0$, there exists a maximal $i \geq 0$ such that $F_{-n}K \subset M_i$. By irreducibility as $F_0K$-module of $M_i/M_{i+1}$ it follows that $F_{-n}K = M_i$.\\

To continue, we again use $B(M) = 0$ for the existence of an $i_1 \geq i$ maximal such that $M_{i_1} \supset F_{n-1}K$. And by \lemref{irrinfinite} we actually have equality. This leads to a sequence $i \leq i_1 \leq i_2 \leq \ldots$ such that
$$M_{i_k} = F_{n - k}$$
and for all $0 \leq l \leq i_{k+1} - i_k$ we obtain the equalities
$$M_{i_k + l} = F_{i_{k+1} - l}F_{n-k}.$$
In particular, we have that 
$$l_{F_0K}(M_{i_k}/M_{i_{k+1}}) \leq i_{k+1} - i_k.$$

In particular, if $FK$ is a strong filtration, then $i_{k+1} = i_k + 1$ for all $k$. It follows that there exists an $n \in \mathbb{Z}$ such that $M$ equals
$$(F_nK)_*: \quad F_nK \supset F_{n-1}K \supset F_{n-2}K \supset \ldots$$

\begin{example}\exalabel{DVR}
Let $O_v \subset K$ be a discrete valuation ring and $F = F^v$ the $v$-adic filtration on the field of fractions $K$. Let $L \subset \Omega \in \GBS_{F^v}(K)$. Suppose that $L_n = L_{n+1}$ for some $n \geq 0$, then $F_1L_n = F_1L_{n+1} \subset L_n \subset F_1L_n$, whence $F_1L_n = L_n$ and it follows that $F_nL_n = L_n = K$. But then it follows that $K = F_{-1}L_{n+1} \subset L_{n+2}$ and $L$ is just the glider
$$K \supset K \supset K \supset \ldots,$$
which has the non-trivial subglider $O_v \supset F_{-1} \supset \ldots$. In particular, it follows that $L_0 \supsetneq L_1$. Let $l \in L_0 - L_1$. Then $O_vl \supset F_{-1}l \supset F_{-2}l \supset \ldots$ is a trivial subglider of type $T_3$, and because $l \notin L_1$, it follows that $L = O_vl$. In particular, $L$ is a fractional ideal and since $O_v$ is a local Dedekind domain, $L = F_n$ for some $n \in \mathbb{Z}$. In fact, one shows that all $L_i$ are fractional ideals and so $L_1 = F_{n+m}$ for some $m < 0$. If $m < -1$, then  
$$\Omega \supset F_{n-1} \supset L_1 \supset L_2 \supset L_3 \supset \ldots$$
would be a non-trivial subglider. Hence $m = 1$ and actually one can show that $L_i = F_{-n-i}$ for all $i \geq 0$. Hence we have shown that
$$\GBS_{F^v}(K) = \{(F^v_n)_* ~\vline~ n \in \mathbb{Z} \},$$
where $F^v_n$ has the chain $F^v_n \supset F^v_{n-1} \supset F^v_{n-2} \supset \ldots$
\end{example}

\begin{example}
Let $p,q$ be two distinct prime numbers. Consider the filtration on $\mathbb{Q}$ defined by $F_0 = \mathbb{Z}_S$ where $S$ is the multiplicatively closed set generated by all prime numbers except for $p$ and $q$. The negative part is the $(pq)$-adic filtration and for $i > 0$ we set
$$F_i = \sum_{k,l \leq i} \mathbb{Z}_S\frac{1}{p^kq^l}.$$
This is a strong filtration. We have that $\Omega = \mathbb{Q}$ whence the $M$-chain must be
$$F_n \supset F_{n-1} \supset F_{n-2} \supset \ldots$$
starting at some $n \in \mathbb{Z}$, i.e. the glider is $(F_n)_\g$. This is however not irreducible since the consecutive quotients are not simple $\mathbb{Z}_S$-modules. Indeed, for any $m \in \mathbb{N}$
$$ (pq)^n \supsetneq (p^{n+1}q^n) \supsetneq (pq)^{n+1}.$$
\end{example}

These examples indicate that for strong filtrations on fields, non-emptiness of the GBS is equivalent to the $F_0K$ being a discrete valuation ring. Indeed, we have

\begin{proposition}\proplabel{fieldstrong}
Let $K$ be a field with exhaustive, separated strong filtration $FK$. Then $\GBS_F(K) \neq \emptyset$ if and only if $F_0K = O_v$ is a DVR and $F = F^v$ is the associated $v$-adic filtration.
\end{proposition}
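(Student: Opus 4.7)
The ``$\Leftarrow$'' implication is immediate from \exaref{DVR}, which gives the explicit description $\GBS_{F^v}(K) = \{(F^v_n)_* \mid n \in \mathbb{Z}\}$ when $F_0K$ is a DVR and $F = F^v$. For the non-trivial direction I would start from an arbitrary $M \in \GBS_F(K)$, use the analysis in the paragraph preceding \exaref{DVR} to normalise $M = (F_nK)_*$ with $M_i = F_{n-i}K$ for some $n \in \mathbb{Z}$, and then run three steps.

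First I would use strongness to exclude any stagnation in the chain $FK$. The text already records $F_{-1}K \subsetneq F_0K$; if instead $F_{-m-1}K = F_{-m}K$ for some $m \geq 0$, strongness yields
$$F_0K = F_mK\,F_{-m}K = F_mK\,F_{-m-1}K \subset F_{-1}K,$$
a contradiction, and the positive direction is symmetric. Hence $G(K)_m \neq 0$ for every $m \in \mathbb{Z}$. Combined with the observation (noted just before \exaref{DVR}) that any irreducible glider over a field must have infinite essential length, \lemref{irrinfinite} now forces each consecutive quotient $M_i/M_{i+1} = G(K)_{n-i}$ to be simple as a left $F_0K$-module.

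The second, and heaviest, step is to upgrade this to a ring-theoretic statement on $G(K)_0 = F_0K/F_{-1}K$. Strongness of $FK$ means $G(K)$ is strongly $\mathbb{Z}$-graded, so by Dade's theorem every $G(K)_m$ is an invertible (rank one projective) module over the commutative ring $G(K)_0$. For an invertible module $P$ over a commutative ring $C$, submodules of $P$ biject with ideals of $C$ via $I \mapsto IP$; consequently, simplicity of $P$ forces $C$ to be a field. Applied to $P = G(K)_{-1}$, this yields that $F_{-1}K$ is a maximal ideal of $F_0K$.

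To close, I would combine maximality of $F_{-1}K$ with \lemref{fieldjac}, which places $F_{-1}K$ inside $J(F_0K)$: together these make $F_{-1}K$ the unique maximal ideal of the local ring $R := F_0K$. Being invertible (its inverse is $F_1K$, by strongness) in a local ring, $F_{-1}K$ is principal, say $F_{-1}K = (\pi)$. Since $R \subset K$ is a domain and $\bigcap_{n \geq 0}(\pi)^n = \bigcap_{n \geq 0} F_{-n}K = 0$ by separatedness, $R$ is a DVR $O_v$ with uniformizer $\pi$; iterating strongness in both directions then identifies $F_{-n}K = (\pi)^n$ and $F_nK = (\pi)^{-n}$ for all $n \geq 0$, which is exactly the $v$-adic filtration $F^v$. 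I expect the main obstacle to be the second step: translating simplicity of a single invertible graded piece into the field condition on the whole ring $G(K)_0$. This is precisely where the strong-graded structure (via Dade) is indispensable, and it accounts for the contrast with the $(pq)$-adic example that follows \exaref{DVR}, in which $F_0K/F_{-1}K$ fails to be a field and no irreducible glider ideal survives.
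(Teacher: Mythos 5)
Your proof is correct, but it reaches the conclusion by a genuinely different route than the paper. The paper argues by cases on whether $F_0K$ is already a DVR: if it is, it invokes \propref{GKdomain} (when $M=(F_nK)_*$ with $n\geq 0$) or a direct computation with $M/M_1\cong R/(\pi^a)$ (when $n<0$) to force $a=1$ and $F=F^v$; if it is not, it runs two explicit subglider-triviality arguments --- one showing that $F_0K$ is local with maximal ideal $F_{-1}K$, and one producing $x\in F_{-1}K\setminus F_{-2}K$ with $(x)=F_{-1}K$ --- and derives a contradiction with non-principality. You instead establish the DVR property positively and without a case split: strongness rules out stagnation, so \lemref{irrinfinite} makes every quotient $G(K)_{n-i}$ simple; the strongly graded structure makes each $G(K)_m$ invertible over $G(K)_0$ with submodule lattice isomorphic to the ideal lattice of $G(K)_0$, forcing $G(K)_0$ to be a field; \lemref{fieldjac} then gives localness, invertibility of $F_{-1}K$ (with inverse $F_1K$) in a local ring gives principality, and separatedness yields the DVR and identifies $F$ with $F^v$. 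This is closer in spirit to the proof of \propref{GKdomain} than to the proof actually printed, and it arguably isolates the mechanism more cleanly (it also explains the $(pq)$-adic counterexample exactly as you say). Two minor points of precision: the simple quotients you actually obtain are $G(K)_m$ for $m\leq n$, so when $n<-1$ you should apply the invertible-module argument to $G(K)_n$ rather than literally to $G(K)_{-1}$ (harmless, since all the $G(K)_m$ have isomorphic submodule lattices); and the last step uses the standard fact that a local domain with principal maximal ideal $(\pi)$ and $\bigcap_n(\pi)^n=0$ is a DVR, which your appeal to separatedness and strongness ($F_{-n}K=(F_{-1}K)^n$) does supply.
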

\begin{proof}
Suppose that $\GBS_F(K) \neq \emptyset$ and let $M \in \GBS_F(K)$. Suppose first that $F_0 = O_v$ is a DVR. There exists $a > 0$ such that $F_{-1} = (\pi^a)$ if $\pi$ is a uniformising parameter of $F_0$. By the structure of the elements in the $\GBS_F(K)$ for strong filtrations, there exists an $n \in \mathbb{Z}$ such that $M = F_n$. If $n \geq 0$, then 
$$F_0 \supset F_{-1} \supset F_{-2} \supset \ldots$$
is also irreducible and \propref{GKdomain} entails that the filtration corresponds to the valuation filtration. If $-n > 0$ then $M = F_n = (\pi^{an})$. Since $FK$ is strong, $M_1 = F_{n-1} = (\pi^{a(n+1)})$. Because $M /M_1 \cong (\pi^{a})$ is simple, $a= 1$ and it follows that $FK = F^vK$. Conversely, suppose that $F_0$ is not a DVR. In particular, when $M = F_nK$ as before, the $n$ must be strictly smaller than $0$ by \propref{GKdomain}. Let $y \in F_0 \setminus F_{-1}$. Then $y^{-1} \in \dot{F}_m$ for some $m \geq 0$. We want to show that $m = 0$. The glider
$$y^{-1}F_{-1}^{m+n} \supset y^{-1}F_{-1}^{n+m+1} \supset \ldots$$
is a subglider of
$$F_{-1}^n \supset F_{-1}^{n+1} \supset \ldots,$$
whence must be trivial of type $T_3$. Hence there exists $r \geq 0$ such that $y^{-1}F_{-1}^{m+n} = F_{-1}^{n+r}$. It follows that $y^{-1} \in F_{-1}^{n+r}F_{-1}^{m+n} = F_{m+n -n -r} = F_{m-r}$, whence $m - r \geq m$ or $r \leq 0$. So $r = 0$ and $y^{-1}F_{-1}^{m+n} = F_{-1}^n$ from which $y^{-1}F_{-1}^m = F_0$ follows. In particular we obtain $1 = y^{-1}x$ for some $x \in F_{-1}^m$. However, $x$ must be $y$ and it follows that $y \in F_{-1}^m$. Finally, we started with $y \in \dot{F}_0$, $m$ must indeed be equal to 0. This shows that $F_0$ is local with maximal ideal $F_{-1}$. However, since $F_0$ is not a DVR, $F_{-1}$ is not principal. Since $FK$ is strong, there exists $x \in F_{-1} - F_{-2}$. The subglider
$$\begin{array}{ccccc}
F_{-n} & \supset & F_{-n-1} & \supset & \ldots \\
\cup && \cup  &&\\
xF_{-n+1} & \supset  & xF_{-n} & \supset &  \ldots
\end{array}$$
must be trivial of type $T_3$, so $xF_{-n+1} = F_{-n -r}$ for some $r \geq 0$, or $(x) = F_{-r-1}$. It follows that $r = 0$, which contradicts the principality of $F_{-1}$. Hence $\GBS_F(K) = \emptyset$.\\
\end{proof}

Together with \exaref{DVR} we obtain that when running over all non-bounded, separated, exhaustive strong filtrations $FK$ on $K$, we have
\begin{equation}\label{GBSfield}
\bigcup_{FK}\GBS_F(K) = R(K) \times \mathbb{Z},
\end{equation}
where $R(K)$ denotes the Riemann surface of $K$. Filtrations that are not strong can also have non-zero glider Brauer-Severi variety. Indeed, one can for example consider a DVR $R = O_v$ with maximal ideal $M = (x)$. The positive part on $K$ is just defined by the standard filtration generated by $F_1K = R + Rx^{-1}$. And for the negative part, one takes 
$$R \supset M^2 \supset M^3 \supset \ldots$$
In fact $\GBS_F(K) = \GBS_{F^v}(K)$. 
\begin{proposition}
Let $K$ be a field with exhaustive, separated filtration $FK$. If $\GBS_F(K) \neq \emptyset$, then there exists $n \geq 0$ such that for all $m \geq n$, $F_{-m}$ is a principal ideal of $F_0$.
\end{proposition}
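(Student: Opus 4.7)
The plan is to use the structural analysis of irreducible gliders on $K$ developed just before \exaref{DVR}: for any $M\in\GBS_F(K)$ that analysis produces an integer $n\in\mathbb{Z}$ and a non-decreasing sequence $0\leq i_0\leq i_1\leq i_2\leq\cdots$ of integers such that $M_{i_k}=F_{n-k}K$ for every $k\geq 0$, and the maximality of $i_k$ forces the strict inclusion $M_{i_k+1}\subsetneq M_{i_k}$. Setting $N:=\max(0,-n)$, every $m\geq N$ has the form $m=k-n$ with $k\geq 0$, so it will suffice to establish that each $F_{n-k}K$ is principal, and the proposition then follows by bookkeeping.

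Fix $k\geq 0$ and pick $a\in M_{i_k}\setminus M_{i_k+1}$. I would then form the descending chain
$$F_0 a\supset F_{-1}a\supset F_{-2}a\supset\cdots,$$
which I claim is a subglider of the shifted glider $M_{i_k}\supset M_{i_k+1}\supset\cdots$ (the latter is again irreducible by \cite[Lemma 2.1]{CVo2}). The verifications are immediate: $F_iA\cdot F_{-j}a\subset F_{i-j}a$ follows from the ring filtration property, while $F_{-j}a\subset F_{-j}A\cdot M_{i_k}\subset M_{i_k+j}$ follows from the glider axiom applied to $M$. Setting up and checking that this chain really is a subglider is the only mildly technical point of the proof; everything after it is automatic.

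With the subglider in place, note that $F_{-j}\neq 0$ (because the filtration is not bounded) and $a\neq 0$ in the field $K$, so every term $F_{-j}a$ is a nonzero element of $K$. This immediately rules out triviality of types $T_1$ and $T_2$. Consequently $F_0 a$ either coincides with the entire shifted glider or is trivial of type $T_3$, and in either case there is a strictly monotone $\alpha:\mathbb{N}\to\mathbb{N}$ with $F_{-j}a=M_{i_k+\alpha(j)}$ for every $j\geq 0$. Since $a\in F_0 a$ but $a\notin M_{i_k+1}$, we are forced to take $\alpha(0)=0$, whence $F_0 a=M_{i_k}=F_{n-k}K$. Thus $F_{n-k}K$ is principal, generated by $a$, and this is exactly what was needed.
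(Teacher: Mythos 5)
Your proof is correct and rests on the same subglider as the paper's argument --- the chain $F_0a \supset F_{-1}a \supset \cdots$ of principal fractional ideals, together with the $T_3$-triviality forced by irreducibility of the shifted glider. The genuine difference lies in how the conclusion is extracted. The paper takes an \emph{arbitrary} $a\in M_i=F_{-n}$, deduces from $T_3$-triviality only that $(a)=F_{l_a}F_{-n-k_a}$ for some admissible pair $(l_a,k_a)$, and then runs an ascending chain argument on principal ideals (using that the $M_{i+j}$ form a totally ordered chain) to climb up to $F_{-n}$. You instead pick $a\in M_{i_k}\setminus M_{i_k+1}$ from the start; then $a\in F_0a=M_{i_k+\alpha(0)}$ while $a\notin M_{i_k+1}$ forces $\alpha(0)=0$, so $F_0a=M_{i_k}=F_{n-k}K$ in one stroke, with no chain-climbing needed. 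Both arguments rely on the same structural facts --- $B(M)=0$, the sequence $i_0\leq i_1\leq\cdots$ with $M_{i_k}=F_{n-k}K$, the strict drop $M_{i_k}\supsetneq M_{i_k+1}$ coming from the maximality defining $i_k$, and the standing ``unbounded'' hypothesis (so $F_{-j}\neq 0$, ruling out $T_1,T_2$) --- so the streamlining is legitimate and, in my view, a cleaner presentation than the paper's.
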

\begin{proof}
By the structure of elements of the glider Brauer-Severi variety, we know that $M = F_lF_m$ for some $l \geq 0$ and $m\in \mathbb{Z}$. In particular, we know that an $F_{-n}$ for $n \geq 0$ appears as an $M_i$. Take $a \in M_i$, then 
$$\begin{array}{ccccccccccc}
F_{-n} & \supset & F_{i_1-1}F_{-n-1} & \supset & \ldots & \supset & F_1F_{-n-1} & \supset & F_{-n-1} & \supset & \ldots \\
\cup && \cup  && && \cup && \cup \\
(a) & \supset  & (a)F_{-1} & \supset &  \ldots & \supset & (a)F_{-i_1+1} & \supset & (a)F_{-i_1} & \supset & \ldots
\end{array}$$
is a subglider of type $T_3$. In particular, there exists $k_a \geq 0$ and $0 \leq l_a \leq i_{k_a}$ such that $(a) = F_{l_a}F_{-n-k_a}$.  If $(a) \subset F_{-n}$ is proper, then there exists $a_1 \in F_{-n} - (a)$, and there exist $k_{a_1} \geq 0$ and $0 \leq l_{a_1} \leq i_{k_{a_1}}$ such that $(a_1) = F_{l_{a_1}}F_{-n-k_{a_1}}$. It must hold that $k_{a_1} \leq k_{a}$ and if equality holds, then $l_{a_1} > l_{a}$. In particular $(a) \subsetneq (a_1) \subset F_{-n}$. If the last inclusion is proper, then we can continue this argument, which must stop by the restrictions on the $k_{a_j}, l_{a_j}$. This shows that $F_{-n}$ is indeed principal. The glider starting from $F_{-n-m}$ for any $m \geq 0$ remains irreducible, and the reasoning above shows that $F_{-n-m}$ is again principal.
\end{proof}

\begin{lemma}\lemlabel{irredfilt}
Let $FK$ be filtration on a field $K$. If $M \in \GBS_F(K)$ with $M = F_0K = R$, then the negative chain $F_{-n}R = M_n$ defines a ring filtration on $R$.
\end{lemma}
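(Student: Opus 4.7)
The plan is to establish $M_n \cdot M_m \subset M_{n+m}$ for all $n,m \geq 0$; combined with $M_0 = R$ and the descending chain structure, this is exactly the assertion that $\{F_{-n}R := M_n\}_{n \geq 0}$ forms a ring filtration on $R$.

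Fix a non-zero $y \in M_n$ and consider the chain $yM \supset yM_1 \supset yM_2 \supset \cdots$ inside $\Omega = K$. First I would verify that this is a subglider of $M$: the inclusion $yM_j \subset M_j$ uses that $y \in R = F_0$ and $M_j$ is a left $R$-module, while the glider axiom $F_i(yM_j) = yF_iM_j \subset yM_{j-i}$ (for $i \leq j$) uses the commutativity of $K$. By irreducibility of $M$, this subglider must be of one of the three trivial types. Because $M$ has zero body (from the earlier analysis of $\GBS_F(K)$ for fields) and multiplication by a non-zero $y$ is injective in the field $K$, one has $B(yM) = yB(M) = 0$ and $yM_j = 0 \iff M_j = 0$; this rules out types $T_1$ and $T_2$. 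Hence $yM$ is trivial of type $T_3$, so there exists a strictly monotone map $\alpha : \mathbb{N} \to \mathbb{N}$ with $yM_j = M_{\alpha(j)}$ and $\alpha(j) - k \geq \alpha(j-k)$ for all $k \leq j$.

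To finish, note that $M_n$ is an $R$-submodule of $R$ and $y \in M_n$, so $yR \subset M_n$, i.e. $M_{\alpha(0)} = yM_0 \subset M_n$, and hence $\alpha(0) \geq n$. Specializing the $T_3$-inequality at $j = k = m$ yields $\alpha(m) \geq \alpha(0) + m \geq n + m$, whence $yM_m = M_{\alpha(m)} \subset M_{n+m}$. The case $y = 0$ being trivial, this gives $M_n \cdot M_m \subset M_{n+m}$, as required. I do not expect a serious obstacle: the proof hinges on the single observation that scalar multiplication by $y$ on $M$ produces a subglider of $M$, after which irreducibility forces type $T_3$ in the field setting and the $T_3$-monotonicity inequality delivers precisely the shift needed for the filtration axiom.
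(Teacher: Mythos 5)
Your proof is correct and takes essentially the same approach as the paper: the paper considers, for each fixed $i$, the subglider $MM_i \supset M_1M_i \supset \ldots$ of $M$ rather than $yM \supset yM_1 \supset \ldots$ for a single $y \in M_n$, then argues it must be trivial of type $T_3$ and reads off $M_jM_i = M_{\alpha(j)} \subset M_{i+j}$ from $\alpha(0)\ge i$ and the $T_3$ monotonicity inequality. Your element-wise variant makes the exclusion of types $T_1$ and $T_2$ slightly more immediate (since $y$ is a unit of $K$, whereas the paper rules out $T_1$ via an exhaustivity argument), but the mechanism and the key inequality are identical.
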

\begin{proof}
Let $i > 0$ and consider the subglider
$$\begin{array}{ccccccccccc}
M & \supset & M_1 & \supset & \ldots & \supset & M_i & \supset & M_{i+1} & \supset & \ldots\\
\cup & & \cup &&&& \cup && \cup &&\\
MM_i & \supset & M_1M_i & \supset & \ldots & \supset & M_iM_i & \supset & M_{i+1}M_i & \supset & \ldots
\end{array}$$
Since $K$ is a field it cannot be of type $T_2$. Suppose that $M_nM_i = M_{n+m}M_i$ for some $n$ and all $m > 0$. It would follow that $F_mKM_nM_i = F_mKM_{n+m}M_i \subset M_nM_i$. Exhaustivity of $FK$ implies that $M_nM_i$ is a $K$-vector space in $K$, i.e. $M_nM_i = K$, contradiction. The subglider is thus of type $T_3$. Because $MM_i = M_i$, $\alpha(0) \geq i$, which implies that for every $j  > 0$, $\alpha(j) > i +j$, which amounts to saying that
$$M_jM_i = M_{\alpha(j)} \subset M_{i+j},$$
or $$F_{-j}RF_{-i}R \subset F_{-i-j}R,$$
proving the claim.
\end{proof}

\begin{proposition}\proplabel{associatedfil}
Let $FK$ be a non-strong filtration such that $\GBS_F(K) \neq \emptyset$ and $F_0K = R$. Then there exists a strong $e$-step filtration $F^sK$ on $K$ such that $F^+K = F^{s,+}K$ and $F^{-}R$ is a trivial subglider of type $T_3$ of $F^{s,-}K$.
\end{proposition}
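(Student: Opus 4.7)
The plan is to bootstrap from an element $M \in \GBS_F(K)$, whose existence is guaranteed by hypothesis, and read off the strong filtration $F^sK$ from its chain. First I would replace $M$ by a tail (which remains irreducible) so that $M_0 = R$: since consecutive composition factors of $M$ are simple or zero by \lemref{irrinfinite}, and since $M$ eventually contains $R$ while $\bigcap M_j = B(M) = 0$, the chain must pass through $R$ at some position, and one takes the tail from there. Then the structural description of elements of $\GBS_F(K)$ developed earlier in this section applies with $n = 0$, yielding a non-decreasing sequence $0 = i_0 \leq i_1 \leq i_2 \leq \cdots$ with $M_{i_k} = F_{-k}R$ and intermediate values
\[
M_{i_k+l} \;=\; F_{i_{k+1}-l}\,F_{-k}R \qquad (0 \leq l \leq i_{k+1}-i_k).
\]
The candidate $F^sK$ would then be defined by leaving the positive part untouched, $F^s_jK := F_jK$ for $j \geq 0$, and setting $F^s_{-j}K := M_j$ for $j \geq 1$, so that $F^{s,+}K = F^+K$ holds by construction.

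The heart of the proof is showing that the increments $e_k := i_{k+1} - i_k$ are independent of $k$, say all equal to some $e \geq 1$, since this integer will play the role of the $e$ in \emph{strong $e$-step}. From the boundary instances $l=0$ and $l=i_{k+1}-i_k$ of the structural formula one extracts the multiplicative identities $F_{i_{k+1}}F_{-k} = F_{-k}$ and $F_{i_k}F_{-k} = F_{-(k+1)}$, and combining these with the simplicity of the quotients $M_j/M_{j+1}$ supplied by \lemref{irrinfinite} pins down the common length of the blocks $[i_k, i_{k+1}]$. Once this constancy is in hand, the block identities $F^s_{-ne+e-1} = F^s_{-ne}$ and $F^s_e F^s_{-e} = R$ defining strong $e$-step are immediate upon substituting back into the chain $M$.

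The remaining verifications are largely bookkeeping. $F^sK$ is a separated exhaustive filtration on $K$: negative-negative multiplicativity $M_i M_j \subseteq M_{i+j}$ is exactly \lemref{irredfilt}; mixed multiplicativity is the glider axiom $F_i M_p \subseteq M_{p-i}$ for $i \leq p$, with the case $i > p$ handled by expanding $M_p$ via the structural formula and returning to relations in $F$; separatedness and exhaustiveness descend from $B(M)=0$ and from $F^+$. Finally, $F^-R$ is exhibited as a trivial type-$T_3$ subglider of $F^{s,-}K$ through a strictly monotone map $\alpha\colon\mathbb{N}\to\mathbb{N}$ that places $\alpha(k)$ at a position inside the length-$e$ block of $F^{s,-}K$ where $F_{-k}R$ appears, spreading consecutive indices across a block when $F^-R$ has a flat run; the inequality $\alpha(n)-m\geq\alpha(n-m)$ then follows from strictness of $\alpha$. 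The principal obstacle throughout is the constancy of $e_k$; everything else reduces to direct application of the glider axioms to $M$.
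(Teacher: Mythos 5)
Your overall architecture matches the paper's: start from some $M \in \GBS_F(K)$, normalize to $M_0 = R$, define $F^sK$ by $F^s_{-n}K := M_n$ and $F^s_{+n}K := F_nK$, and check strong $e$-step. But there are two substantive problems.

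\textbf{The reduction to $M_0 = R$ has a gap.} You assert that ``$M$ eventually contains $R$'' so the chain must pass through $R$; this is not justified and is in fact false in general. The structure theory only gives that some $M_i = F_mK$ for some $m \in \mathbb{Z}$, and when $m < 0$ one has $M \subset F_mK \subsetneq R$, so $R$ never appears in the chain no matter how far you tail. The paper handles this case by an essential extra step: pick $x \in F_{-m}K$ and pass to $xM \supset xM_1 \supset \ldots$, which is again in $\GBS_F(K)$ since $K$ is a field; because $1 \in xM$, the new chain does contain and pass through $R$. Without this multiplication trick your normalization does not go through.

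\textbf{The ``heart of the proof'' is not actually carried out, and misses the paper's key shortcut.} You identify the constancy of $e_k = i_{k+1}-i_k$ as the crux, extract two multiplicative relations from the boundary values of the structural formula, and then say this ``pins down the common block length'' without giving the argument. The two relations you write look suspect (for $k=0$ the second one would read $F_0K\,F_0K = F_{-1}K$, i.e.\ $R = F_{-1}K$), so this part needs genuine work, and it is not a bookkeeping matter. The paper sidesteps the issue entirely: after using \lemref{irredfilt} to see that $n \mapsto M_n$ is a negative ring filtration on $R$, it observes that the negative part of the freshly defined $F^sK$ \emph{is} the chain $M$, and that $M$ is automatically irreducible as an $F^sK$-glider (every $F^sK$-subglider is an $FK$-subglider, since $F^s_{-n}K = M_n \supset F_{-n}K$ and the positive parts coincide). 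Then \propref{estep} applies verbatim and instantly delivers the strong $e$-step conclusion. This invocation of \propref{estep} is the real content of the paper's proof; your attempted direct rederivation of its conclusion is both incomplete and unnecessary once you notice the irreducibility transfers.
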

\begin{proof}
Let $M \in \GBS_F(K)$. Up to considering the irreducible glider $M_n \supset M_{n+1} \supset \ldots$, there exists an $m \in \mathbb{Z}$ such that $M = F_mK$. Suppose that $m < 0$. Let $x \in F_{-m}K$, then since $FK$ is exhaustive, $x^{-1} \in F_nK$ for some $n > 0$. Since we are working in a field $K$, the glider
$$xM \supset xM_1 \supset xM_2 \supset \ldots$$
is also irreducible. And because $1 \in xM$, we have $R \subset xM$ and it follows that there appears an $F_rK$ as an $xM_j$ for some $r \geq 0$, hence also $F_0K = R$ appears. In particular, whenever $\GBS_F(K) \neq \emptyset$, we can find an irreducible glider starting with $M = F_0K = R$. By the structure of irreducible gliders, we know that there exists $n > 0$ such that $M_{n} = F_{-1}K$, which leads to $F_0K = F_nKF_{-1}K$. So actually $n > 1$. The previous lemma entails that the $M$-chain defines a negative ring filtration on $R$. Define the filtration $F^sK$ on $K$ by 
$$F^s_{n}K = F_nK, \quad F^s_{-n}K = M_n, \quad n \geq 0.$$
\propref{estep} entails that $F^sK$ is a strong $e$-step filtration. Since $n$ is the smallest number such that $F_nKF_{-1}K = R$, it follows that $e \leq n < 2e$. And since, $F_{n-1}KF_{-1}K < R$, $n$ actually equals $e$. 
\end{proof}

The $e$ in the above proposition is determined by the positive filtration $F^+K$, i.e. it is the smallest number $m \geq 1$ such that $F_{m}K \supsetneq F_{m-1}K$. 

\begin{theorem}\thelabel{GBSfield}
Let $K$ be a field, then
$$\GBS(K) = R(K) \times \mathbb{Z}.$$
\end{theorem}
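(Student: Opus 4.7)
The plan is to deduce the theorem from equation~(\ref{GBSfield}), which already identifies the union of $\GBS_F(K)$ over strong non-bounded separated exhaustive filtrations with $R(K)\times\mathbb{Z}$; it only remains to show that non-strong filtrations contribute nothing new.

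The inclusion $R(K)\times\mathbb{Z}\subseteq\GBS(K)$ is immediate from the work done so far: for every DVR $O_v\subset K$, \exaref{DVR} exhibits the shifted chains $(F^v_n)_*$, $n\in\mathbb{Z}$, as elements of $\GBS_{F^v}(K)\subseteq\GBS(K)$, and taking the union over all $v\in R(K)$ yields the claim.

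For the reverse inclusion $\GBS(K)\subseteq R(K)\times\mathbb{Z}$ I take an arbitrary non-bounded separated exhaustive filtration $FK$ and any $M\in\GBS_F(K)$. If $FK$ is strong, then \propref{fieldstrong} forces $F_0K=O_v$ to be a DVR and $FK$ to coincide with the $v$-adic filtration $F^vK$, and \exaref{DVR} identifies $M$ with some $(F^v_n)_*$, giving the pair $(v,n)\in R(K)\times\mathbb{Z}$. If $FK$ is not strong, I invoke \propref{associatedfil}: after the preliminary shift described in the discussion preceding \propref{fieldstrong} (which normalizes $M$ so that $M=F_0K=R$ while recording the shift as the $\mathbb{Z}$-coordinate), there is a strong $e$-step filtration $F^sK$ with $F^{s,+}K=F^+K$ and $F^s_{-n}K=M_n$ for $n\geq 0$, so that $M$ coincides with the negative part $F^{s,-}R$ as a chain of $R$-modules. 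Since $F^s_{-n}K\supseteq F_{-n}K$, every $F^sK$-subglider of $M$ is automatically an $FK$-subglider, so triviality of subgliders is inherited and $M\in\GBS_{F^s}(K)$; the strong case applied to $F^sK$ then finishes the argument.

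The main obstacle I anticipate is the bookkeeping around the preliminary shift reducing an arbitrary $M\in\GBS_F(K)$ to the normalized form $M=F_0K$. This relies on the structural analysis of irreducible gliders over a field developed after \lemref{irrinfinite}, according to which irreducible gliders have infinite essential length and their terms are forced to be of the form $F_jK$ (possibly after multiplication by a unit in $K^\times$, or passage to a tail of the chain). This shift is precisely what records the $\mathbb{Z}$-coordinate. Once this identification is in place, \propref{fieldstrong}, \propref{associatedfil} and \exaref{DVR} together deliver both inclusions, yielding the equality $\GBS(K)=R(K)\times\mathbb{Z}$.
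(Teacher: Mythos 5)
Your proposal is correct and follows essentially the same route as the paper: the paper's proof likewise reduces to equation~\eqref{GBSfield} via \propref{associatedfil}, asserting $\GBS_F(K)=\GBS_{F^s}(K)$ for non-strong $FK$. Your added observation that $F^s_{-n}K\supseteq F_{-n}K$ forces every $F^sK$-subglider to be an $FK$-subglider (so irreducibility transfers) is a worthwhile detail that the paper leaves implicit.
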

\begin{proof}
Let $FK$ be a filtration. If $\GBS_F(K) \neq \emptyset$, then $\GBS_F(K) = \GBS_{F^s}(K)$, so the result follows by \eqref{GBSfield}. 

\end{proof}

\section{The relative Glider-Brauer-Severi variety for a central simple algebra}\seclabel{GBScsa}

In this section we determine the relative Brauer-Severi variety $\GBS(A)$ of a central simple algebra $A$ over a field $K$, meaning that we run over all filtrations $FA$ extending some fixed filtration $FK$, i.e. satisfying $F_nA \cap K = F_nK$ for all $n \in \mathbb{Z}$. Before we put some additional conditions on $FK$, we prove the following.

\begin{lemma}\lemlabel{infinitelength}
Let $FA$ be a filtration extending $FK$. If $M \in \GBS_F(A)$ and $M$ is not a left $A$-module, then $\el(M) = +\infty$. 
\end{lemma}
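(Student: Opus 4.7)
My plan is to argue by contradiction: assume $\el(M)=d$ is finite, so that $M_{d+1}=M_{d+2}=\cdots$ while $M_d\supsetneq M_{d+1}$. Write $B(M):=M_{d+1}$ for the stable body.

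The first step is to observe that $B(M)$ is a left $A$-submodule of $A$. For $i\geq 0$, the glider axiom $F_iA\cdot M_j\subset M_{j-i}$ applied at $j=d+1+i$ gives $F_iA\cdot M_{d+1+i}\subset M_{d+1}$, and the stability $M_{d+1+i}=M_{d+1}$ yields $F_iA\cdot M_{d+1}\subset M_{d+1}$. For $i\leq 0$, the inclusion $F_iA\subset F_0A$ together with the fact that $M_{d+1}$ is already an $F_0A$-module handles the action. Hence $A\cdot B(M)\subset B(M)$.

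The decisive step is to apply the glider axiom at $i=-d-1$, $j=0$, which yields $F_{-d-1}A\cdot M\subset M_{d+1}=B(M)$. Restricting to the central scalars gives $xM\subset B(M)$ for every $x\in F_{-d-1}K$. Using the paper's standing hypothesis that $FK$ has non-trivial negative part, I can pick $x\in F_{-d-1}K$ with $x\neq 0$; since $K$ is a field, $x$ is invertible and $x^{-1}\in K\subset A$. Picking any $m\in M_d\setminus M_{d+1}$ (possible because $M_d\supsetneq M_{d+1}$), we have $xm\in B(M)$, whence $m=x^{-1}(xm)\in A\cdot B(M)=B(M)=M_{d+1}$, contradicting the choice of $m$. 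Therefore no finite $d\geq 0$ can occur and $\el(M)=+\infty$.

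The one delicate point is the availability of a nonzero central element in $F_{-d-1}K$, which is guaranteed by the standing non-trivial negative part assumption on $FK$; without it, one would have to replace $x$ by an invertible element of $F_{-d-1}A$, whose existence is not automatic. The hypothesis that $M$ is not a left $A$-module is used only to discard the alternative scenario of a constant chain $M=M_1=M_2=\cdots$, which is automatically a left ideal and has $\el(M)=+\infty$ by the paper's convention.
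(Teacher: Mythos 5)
Your argument is correct and is essentially the paper's own proof: assume $\el(M)=d<\infty$, note $F_{-d-1}A\cdot M\subset M_{d+1}=B(M)$, and use a nonzero (hence invertible) element of $F_{-d-1}K$ together with the fact that $B(M)$ is a left $A$-module to force $M\subset B(M)$, contradicting $M_d\supsetneq M_{d+1}$. You merely spell out the two steps the paper leaves implicit ($A\cdot B(M)\subset B(M)$ and the reliance on the standing non-trivial-negative-part hypothesis for $FK$), which is fine.
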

\begin{proof}
Suppose that $\el(M) = d < +\infty$. It follows that $F_{-d-1}M \subset B(M)$. However, since $ 0 \neq F_{-d-1}K \subset F_{-d_1}$ contains invertible elements, it follows that $M \subset B(M) \subset M$, contradiction. 
\end{proof}

\begin{corollary}\corlabel{infinitelength}
In the situation of the previous lemma, if $N = M_i$ is a left $A$-module for some $i \in \mathbb{N}$, then $M_i = N = M$.
\end{corollary}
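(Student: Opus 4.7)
The plan is to mimic the argument from \lemref{infinitelength}, replacing the body $B(M)$ (which was automatically $A$-stable in that proof) by the hypothesised $A$-stable submodule $M_i$. The only extra input needed is a nonzero element $k \in F_{-i}K$, which is available because $FK$ has non-trivial negative part -- this is tacit already in the proof of \lemref{infinitelength}.

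Having fixed such a $k$, I would apply the glider axiom $F_jA\cdot M_n \subset M_{n-j}$ with $j=-i$ and $n=0$ to obtain
$$k\cdot M \;\subset\; F_{-i}A \cdot M \;\subset\; M_i.$$
Since $K$ is a field, $k^{-1}\in K\subset A$, and the hypothesis that $N = M_i$ is closed under the full left $A$-action yields $k^{-1}\cdot M_i \subset M_i$. Multiplying $k\cdot M \subset M_i$ through by $k^{-1}$ then produces
$$M \;=\; k^{-1}\cdot(k\cdot M) \;\subset\; k^{-1}\cdot M_i \;\subset\; M_i,$$
and combined with the automatic containment $M_i\subset M$ this forces $M = N = M_i$.

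There is no real obstacle here: the entire argument reduces to invertibility in $A$ of a nonzero element of $K$ of sufficiently negative filtration degree, exactly as in the proof of \lemref{infinitelength}. As a sanity check, observe that the conclusion $M=M_i$ combined with the hypothesis that $M_i$ is a left $A$-module would force $M$ itself to be a left $A$-module, directly contradicting the hypothesis of \lemref{infinitelength}; so the corollary is effectively saying that under those hypotheses no $M_i$ can acquire a left $A$-module structure at all, which is presumably the content that will be used further on.
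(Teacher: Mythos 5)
Your proof is correct, but it takes a genuinely different and more elementary route than the paper. The paper's argument stays inside the glider formalism: it first uses \lemref{infinitelength} to arrange $M_i \supsetneq M_{i+1}$, then forms the subglider that equals $N$ in positions $0,\dots,i$ and continues with $M_{i+1}, M_{i+2},\dots$, and finally invokes irreducibility of $M$ to force this subglider to be trivial of type $T_3$ with $\alpha = \mathrm{id}$, whence $N = M_0 = M$. Your argument uses neither irreducibility nor the infinite essential length: it needs only the glider axiom $F_{-i}A\cdot M \subset M_i$ together with a nonzero $k \in F_{-i}K$, whose existence is exactly the standing assumption that $FK$ has non-trivial negative part (the same fact the paper's proof of \lemref{infinitelength} relies on when it writes $0 \neq F_{-d-1}K$). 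Since $k$ is invertible in $A$ and $M_i$ is $A$-stable, $M = k^{-1}kM \subset k^{-1}M_i \subset M_i \subset M$, forcing equality. What your approach buys is brevity and extra generality: the conclusion holds for any left glider ideal with $F_{-n}K \neq 0$ for all $n$, irreducible or not, whereas the paper's proof is tied to $M \in \GBS_F(A)$. Your closing sanity check is also accurate: combined with the lemma's hypothesis that $M$ is not a left $A$-module, the corollary says no $M_i$ can be a left $A$-module, which is precisely how it is used in the proof of \propref{semisimple}.
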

\begin{proof}
The previous lemma entails $\el(M) = +\infty$, whence we may assume that $M_i \supsetneq M_{i+1}$. The subglider
$$\begin{array}{ccccccccccc}
M & \supset & \ldots & \supset &  M_i & \supsetneq & M_{i+1} & \supset & \ldots \\
\cup && &&\cup  &&\cup&&\\
N & \supset  & \ldots & \supset & N & \supsetneq & M_{i+1} & \supset & \ldots
\end{array}$$
must be trivial of type $T_3$ and $\alpha$ the identity map.
\end{proof}

\begin{proposition}\proplabel{semisimple}
Suppose we are in the situation of \lemref{infinitelength}. If $N \subsetneq M$ is a left $A$-module, then $N \subset B(M)$.
\end{proposition}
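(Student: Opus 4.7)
The plan is to show directly, without appealing to the $T_1$--$T_3$ trichotomy for subgliders, that every left $A$-submodule $N \subset M$ sits inside the body $B(M) = \bigcap_{j \geq 0} M_j$. The crucial input is that under the standing hypotheses of the section the induced filtration $FK$ has non-trivial negative part, so for each $j > 0$ one may pick a nonzero $k \in F_{-j}K$; being in a field, such $k$ is invertible in $K \subset A$.

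First I would establish the identity $F_{-j}A \cdot N = N$ for every $j > 0$. The inclusion $F_{-j}A \cdot N \subset N$ is immediate from $N$ being an $A$-module; for the reverse I would write an arbitrary $n \in N$ as $n = k \cdot (k^{-1}n)$ with $k \in F_{-j}K \setminus \{0\}$, noting that $k^{-1}n \in K \cdot N \subset N$. Next I invoke the defining glider inequality $F_{-j}A \cdot M_0 \subset M_j$, which is just the case $i = -j$, $l = 0$ of $F_iA \cdot M_l \subset M_{l-i}$. Concatenating the two gives
$$ N \;=\; F_{-j}A \cdot N \;\subset\; F_{-j}A \cdot M \;\subset\; M_j $$
for every $j > 0$, and together with $N \subset M_0$ this forces $N \subset \bigcap_j M_j = B(M)$.

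I do not expect a real obstacle: irreducibility of $M$, the strict inclusion $N \subsetneq M$, and the hypothesis that $M$ itself is not an $A$-module play no active role in the argument; they merely guarantee that the conclusion $N \subset B(M)$ is a non-vacuous constraint (otherwise one could have $B(M) = M$). The one subtle point worth flagging in the write-up is the dependence on $F_{-j}K \neq 0$, i.e.\ on the non-trivial negative part of $FK$, which is implicit in the invocation of \lemref{infinitelength} and is precisely what allows the field-theoretic invertibility argument to drive $N$ arbitrarily deep into the filtration.
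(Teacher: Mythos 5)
Your argument is correct, and it is genuinely different from --- and considerably more direct than --- the proof in the paper. The paper's proof runs the subglider $N \supset N\cap M_1 \supset N \cap M_2 \supset \ldots$ through the $T_1$/$T_2$/$T_3$ trichotomy: it excludes $T_3$ via \corref{infinitelength}, excludes one case using semisimplicity of $A$ (splitting $AM = N \oplus V$ and producing a non-trivial subglider from the complement), and only in the remaining case, where $N \cap M_n$ stabilizes, does it finish with an invertibility computation $x = t_k s_k x$ with $s_k \in F_{-k}K$ --- essentially your trick, but needing the stabilization $N\cap M_k = N \cap M_{k+\deg(t_k)}$ to absorb the positive-degree factor $t_k$. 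You avoid the positive-degree factor altogether by factoring $n = k\cdot(k^{-1}n)$ with $k \in F_{-j}K$ and $k^{-1}n \in N \subset M_0$, so that the single glider inequality $F_{-j}A\cdot M_0 \subset M_j$ already places $n$ in $M_j$; no case analysis, no irreducibility of $M$, and no semisimplicity of $A$ are needed. Your proof thus establishes the stronger statement that \emph{any} left $A$-submodule of $\Omega$ contained in $M$ lies in $B(M)$, for any glider $M$ over a filtration whose induced $FK$ has non-trivial negative part; the paper's route buys nothing extra here beyond exercising the trichotomy machinery. Your flagging of the dependence on $F_{-j}K \neq 0$ is exactly right --- the paper's own proof invokes the same hypothesis ("Since $F_{-k}K \neq 0$, take $s_k \in F_{-k}K$"), which is the standing assumption carried over from \secref{pre}.
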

\begin{proof}
\corref{infinitelength} entails $N \neq M_i$ for all $i \geq 0$. Irreducibility of $M$ entails that $N \supset N \cap M_1 \supset N \cap M_2 \supset \ldots$ must be trivial and it cannot be of type $T_3$. Since $A$ is semisimple, triviality of type $T_2$ would entail that $AM = N \oplus V$ for some left $A$-module $V$ and $M = N \oplus W$ for $W \subset V$ a left $F_0A$-submodule. Since $M_n \subset W$ it follows that $W \cap M \supset W \cap M_1 \supset \ldots$ would be a non-trivial subglider. Hence the subfragment is trivial of type $T_1$, i.e. there exists $n > 0$ such that $N \cap M_n = N \cap M_{n+m}$ for all $m \geq 0$. Let $x \in N$ and $k \geq n \in \mathbb{N}$. Then $F_{-k}N \subset N \cap M_k$. Since $F_{-k}K \neq 0$, take $s_k \in F_{-k}K$. If $t_ks_k = 1$ with $t_k \in F_{\deg(t_k)}K$, then 
$$x = t_ks_kx \in t_k(N \cap M_k) = t_k(N \cap M_{k+\deg(t_k)}) \subset M_k,$$
which shows that $N \subset M_k$ for all $k \geq n$, and $N \subset B(M)$ follows.
\end{proof}

\begin{proposition}\proplabel{principal}
Let $A$ be a csa over $K$ with exhaustive, separated filtration $FA$ inducing $FK$ with non-trivial negative part. If $M \in \GBS_F(A)$ then $B(M) = 0$ and $AM$ is an irreducible principal left $A$-module.
\end{proposition}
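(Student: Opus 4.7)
The plan is to realise $M$ as a cyclic subglider generated by a well-chosen element $v\in M$, and then to read off the principality of $AM$, the vanishing of $B(M)$, and the irreducibility of $AM$ from the classification of trivial subgliders.

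I first locate a minimal index $k\geq 0$ with $M_k\supsetneq M_{k+1}$. Such a $k$ must exist: if instead $M=M_1=M_2=\cdots$ were constant, the non-trivial negative part of $FK$ forces $F_{-i}K$ to contain $K$-invertibles for every $i\geq 0$, so for any proper $B$-submodule $N\subsetneq M$ one has $F_{-n}AN=N$, and the chain $N\supset F_{-1}AN\supset F_{-2}AN\supset\cdots$ is a subglider of $M$ that is trivial of no permissible type — $T_1$ is blocked by $M_n=B(M)=M$, $T_2$ by the invertibles, and $T_3$ by $F_{-n}AN=M\Rightarrow N=M$ — contradicting the irreducibility of $M$. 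With $k$ fixed (so $M=M_0=\cdots=M_k\supsetneq M_{k+1}$), I pick $v\in M\setminus M_{k+1}$ and form the subglider
$$\langle v\rangle:\quad N_i := F_{k-i}Av\subset M_i\qquad (i\geq 0),$$
which by irreducibility of $M$ must be trivial. Type $T_2$ is ruled out because $F_{k-n}Av=0$ would, using an invertible element in $F_{k-n}K$, force $v=0$. In type $T_3$, the containment $v\in N_k=Bv$ combined with $v\notin M_{k+1}$ forces $\alpha(k)\leq k$; strict monotonicity of $\alpha$ then forces $\alpha(k)=k$, whence $Bv=M_k=M$ and $AM=A\cdot Bv=Av$ is principal as a left $A$-module.

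To exclude type $T_1$ simultaneously with proving $B(M)=0$, I use that the body
$$N := \bigcap_{i\geq 0}F_{k-i}Av$$
is actually a left $A$-submodule of $A$: for every $j\in\mathbb{Z}$,
$$F_jA\cdot N\subset\bigcap_{i}F_{j+k-i}Av = N,$$
the equality holding because the sets $F_{k-i}Av$ form a decreasing cofinal sequence as $i\to\infty$. The containment $N\subset B(M)$ is automatic since $N\subset F_{k-i}Av\subset M_i$ for every $i$. In the case $M$ is not itself a left $A$-module (the contrary case is handled by \corref{infinitelength} together with the splitting argument below), \propref{semisimple} absorbs any non-zero $A$-submodule of $M$ into $B(M)$; taking $0\neq w\in B(M)$ and running the same analysis on the subglider $\langle w\rangle$ generated by $w$, type $T_3$ would force $Bw=M_{\alpha(0)}\subset B(M)$ and thereby stabilise the chain of $M$ at $B(M)$, contradicting $\el(M)=+\infty$ from \lemref{infinitelength}; iterating type $T_1$ instead produces a strictly descending sequence of non-zero $A$-submodules inside the finite-dimensional $K$-algebra $A$, which must terminate. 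This forces $B(M)=0$ and hence rules out type $T_1$ for $\langle v\rangle$.

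Finally, the irreducibility of $AM=Av$ as a left $A$-module is obtained by contradiction using the semisimplicity of the csa $A$: a proper decomposition $Av=P\oplus Q$ would, via the projections $\pi_P,\pi_Q:Av\to P,Q$, yield $B$-chains $\pi_P(M_i)$ and $\pi_Q(M_i)$ that set up two non-zero subgliders of $M$ parallel to $\langle v\rangle$ (starting from $\pi_P(v)$ and $\pi_Q(v)$); at least one of them must violate all three triviality types, contradicting the irreducibility of $M$. The main obstacle I anticipate is the type-$T_1$/$B(M)=0$ coupling, where one must carefully combine the $A$-stability of the intersection body with \propref{semisimple}, \lemref{infinitelength}, and the finite $K$-dimensionality of $A$ to orchestrate a terminating descent; the remaining steps amount to bookkeeping on the strict monotonicity of $\alpha$ and the ubiquity of $K$-invertibles in the negative filtration.
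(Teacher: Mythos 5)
Your route to principality is genuinely different from the paper's, and its core works: instead of analysing the quotient $AM/B(M)$ by comparing two hypothetical generators (the paper's method, which uses \lemref{irrinfinite} to identify each $F_nAv$ with some $M_i$), you pick $v$ at the top of the chain and test the cyclic subglider $N_i=F_{k-i}Av$ against the three triviality types. The $T_2$ and $T_3$ cases are handled correctly, and in the $T_3$ case you even obtain the stronger conclusion $M=Bv$. Note that $T_1$ for this particular subglider can be excluded directly, without first knowing $B(M)=0$: since $B(\langle v\rangle)=\bigcap_iF_{k-i}Av$ is, as you observe, a left $A$-module, $T_1$ would give $Av=A\cdot F_{k-n}Av=A\cdot B(\langle v\rangle)=B(\langle v\rangle)\subset B(M)\subset M_{k+1}$, contradicting $v\notin M_{k+1}$.

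The genuine gap is in your proof that $B(M)=0$. For $0\neq w\in B(M)$ the subglider $\langle w\rangle$ lies entirely inside $B(M)$, and in its $T_1$ case all you obtain is $F_{-n}Aw=B(\langle w\rangle)=Aw$ for $n$ large, i.e.\ the chain of $\langle w\rangle$ stabilises at the nonzero $A$-module $Aw\subset B(M)$. This is not absurd on its face, and I do not see the ``strictly descending sequence of non-zero $A$-submodules'' you invoke: the body of $\langle w\rangle$ coincides with $Aw$ itself, so there is no proper inclusion to iterate, and even a terminating descent would end at a nonzero $A$-module rather than at $0$. The paper closes this case differently: semisimplicity gives $AM=B(M)\oplus Av$, the glider $M$ then decomposes as a strong direct sum $B(M)\,\dot{\oplus}\,(F_nAv\supset\ldots)$, and irreducibility forbids such a decomposition unless $B(M)=0$; you need that splitting (or a Nakayama-type argument as in the proof of \propref{relativecsa}) here. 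Two smaller points: your argument that some $M_k\supsetneq M_{k+1}$ exists presupposes a proper nonzero $B$-submodule $N\subsetneq M$ and asserts $F_{-n}AN=N$ without justification (for $T_3$ it is enough to note that $N=N_0=M_{\alpha(0)}=M$); and in the final step $\pi_P(M_i)$ is a subglider of $M$ only after you know $M_i=F_{k-i}Av$, at which point each of the two parallel subgliders, not merely one of them, violates all three triviality types.
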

\begin{proof}
We first show that $AM/B(M)$ is a principal left $A$-module. Suppose that $AM/B(M) = A\ov{v} + A\ov{w}$ is generated by two elements. The proof for more generators is an easy modification. Let $v, w$ be lifts for $\ov{v}, \ov{w}$ respectively.  If $F_nv \subset M$ for all $n \in \mathbb{Z}$, then $Av \subsetneq M$ and $Av \subset B(M)$ by \propref{semisimple}, which would lead to $\ov{v} = 0$. Reasoning similarly for $Aw$ we have shown that there exist $n,m$ maximal such that $F_nv \subset M$ and $F_mw \subset M$. Since $\ov{v} \neq 0$ there exists $i \geq 0$ minimal such that $F_nv \subset M_i$ and $F_nv \not\subset M_{i+1}$. In particular, $M_{i+1} \subsetneq M_i$ and \lemref{irrinfinite} entails that $M_i = F_nv$. Similarly, we arrive at the equality $F_mw = M_j$ for some $j$. Without loss of generality, we may assume $i \leq j$, i.e. $F_nv \subset F_mw$. However, since $F_nK \subset F_n$ contains invertible elements, it follows that $v \in Aw$, which entails $AM/B(M)$ is principal.  Since $A$ is semisimple, $\Omega = AM = B(M) \oplus Av$ for some left $A$-module $Av$. However, the irreducible left glider $M$ then becomes a strong glider direct sum - i.e. the sum is direct on every level - 
$$B(M) \dot{\oplus} \Big(N \supset N_1 \supset \ldots\Big),$$
where $N = F_nv$ for some $n \in \mathbb{Z}$. Irreducibility then shows that $B(M)$ must be zero and $Av$ must be an irreducible left ideal.  
\end{proof}

The previous proposition shows that $AM = Av$ for some $Av \in \BS(A)$, the classical Brauer-Severi variety. One can perform the similar reasoning we did for fields to show that there exists $n \in \mathbb{Z}, i \in \mathbb{N}$ such that $M_i = F_nAv$. If we further assume that $FA$ is strong, then $M$ becomes the glider
\begin{equation}\label{formstrong}
(F_mAv)_*: \quad F_mAv \supset F_{m-1}Av \supset F_{m-2}Av \supset \ldots
\end{equation}

On the field, we put a valuation filtration corresponding to a DVR $F_0K = O_v = R$. Since $FA$ extends $FK$, we have for $n \geq 0$, $F_{-n}KF_nA \subset F_0A$ and since $FK$ is strong, it follows that $F_nA \subset F_nKF_0A$. The other inclusion is obvious, so $F_nA = F_nKF_0A$ for all $n \in \mathbb{Z}$. This shows that $FA$ is strong as well, and the irreducible left glider ideals are of the form \eqref{formstrong}. We denote the relative Glider-Brauer-Severy variety with regard to $FK$ by $\GBS^{FK}(A)$. 

\begin{proposition}\proplabel{relativecsa}
Let $A$ be a csa with $FA$ extending $FK$ a valuation filtration with $F_0K$ a DVR and put $F_0A = B$. Let $M$ be non-empty in $\GBS_F(A) = \GBS^{FK}(A)$, then $AM = KM = Av$ with $Av \in \BS(A)$, $v$ any generator of an $A$-module. Then there is an $n \in \mathbb{N}$ such that $M_i = F_{n-i}Av$ and $M_i = F_{n-i}Av \cap Kv = F_{n-i}Kv$ is an irreducible $FK$-glider in $Kv$. Thus the $F_{n-i}Kv$ define an element of $\GBS_F(K)$. As a consequence we get $\GBS^{FK}(A) = \BS(A) \times \mathbb{Z}$ (as sets).
\end{proposition}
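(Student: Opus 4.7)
The strategy is to reduce to \propref{principal} for the structure of $AM$, invoke the strongness of $FA$ (which is forced by the extension property from the valuation filtration $FK$) to pin down the glider chain, and then descend to $Kv$ via intersection to recover the $\GBS_F(K)$-factor.

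First, by \propref{principal}, $B(M) = 0$ and $AM = Av$ is an irreducible left $A$-module, so $Av \in \BS(A)$; fix a generator $v$. Since $FA$ extends $FK$ and the latter is the valuation filtration of the DVR $R = F_0K$, the inclusion $F_{-n}K\cdot F_nA \subset F_0A$ together with $F_{-n}K \neq 0$ and the strongness of $FK$ forces $F_nA = F_nK\cdot B$; in particular $FA$ is itself strong. Hence the structure result \eqref{formstrong} applies and gives a unique $n \in \mathbb{Z}$ with $M_i = F_{n-i}Av$ for all $i \geq 0$.

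Next, I would establish the intersection identity by normalizing $v$ so that the lattice $Bv$ is primitive with respect to the line $Kv$. By \corref{order}, $B$ is an $R$-order in $A$, so $Bv$ is a full $R$-lattice in the $K$-vector space $Av$. The intersection $Bv \cap Kv$ is a nonzero finitely generated $R$-submodule of the one-dimensional $K$-space $Kv$; since $R$ is a DVR, it equals $\pi^{-k}Rv$ for some $k \in \mathbb{Z}$, where $\pi$ is a uniformizer. Replacing $v$ by $\pi^{k}v$ (which leaves $Av$ unchanged and merely shifts the integer $n$ accordingly), we may and do assume $Bv \cap Kv = Rv$. Using $F_{n-i}K = \pi^{-(n-i)}R$ together with strongness,
\[
F_{n-i}Av \cap Kv \;=\; \pi^{-(n-i)}Bv \cap Kv \;=\; \pi^{-(n-i)}\bigl(Bv \cap Kv\bigr) \;=\; \pi^{-(n-i)}Rv \;=\; F_{n-i}Kv.
\]
Thus the chain $M_i \cap Kv = F_{n-i}Kv$ lies in $Kv \cong K$, and by \exaref{DVR} it is an irreducible $FK$-glider, producing a canonical element of $\GBS_F(K)$.

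Finally, the bijection $\GBS^{FK}(A) \cong \BS(A) \times \mathbb{Z}$ is implemented by $M \mapsto (AM, n)$, with inverse sending $(Av,n)$ to the glider ideal $(F_nAv)_{*}$; irreducibility of the latter follows from the rigidity of strong filtrations, since any subglider is forced to be trivial of type $T_3$ with identity $\alpha$ on the nose. The main technical point is the primitivity normalization $Bv \cap Kv = Rv$: it is exactly here that the DVR hypothesis on $R$ (to ensure that nonzero $R$-submodules of $Kv$ are cyclic) and the order hypothesis on $B$ (to ensure $Bv$ is a full lattice) enter the argument decisively, and without them the identification with $\GBS_F(K)$ would collapse.
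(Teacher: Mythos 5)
Your proof follows the paper's approach for the first two steps (invoking \propref{principal} for $AM = Av$, and deriving strongness of $FA$ and the formula $M_i = F_{n-i}Av$), but it diverges at the intersection identity, and the divergence introduces a genuine gap.

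The normalization you propose does not work. If $Bv \cap Kv = \pi^{-k}Rv$ and you replace $v$ by $v' = \pi^k v$, then $Bv' = \pi^k Bv$ and $Kv' = Kv$, so
\[
Bv' \cap Kv' = \pi^k Bv \cap Kv = \pi^k(Bv \cap Kv) = \pi^k \cdot \pi^{-k}Rv = Rv,
\]
but $Rv' = \pi^k Rv$, hence $Bv' \cap Kv' = \pi^{-k}Rv'$. The offset $k$ is unchanged: scaling $v$ by a central element rescales $Bv$ and $Rv$ simultaneously, so the relative position of the lattice $Bv$ with respect to the line $Kv$ is a scaling-invariant of the pair $(B, Kv)$ and cannot be removed by a choice of generator within $K^\times v$. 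Consequently the chain of equalities in your display rests on an assumption ($Bv \cap Kv = Rv$) that you have not established, and the identification of $M_i \cap Kv$ with $F_{n-i}Kv$ is unproved.

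What is actually needed here is the irreducibility of $M$, which you never use at this step. The paper's argument proceeds as follows: one first checks that $B(M_i \cap Kv) = M_i$, because $F_{n-i}Kv \subset M_i \cap Kv$ and $B\cdot F_{n-i}Kv = F_{n-i}KBv = F_{n-i}Av = M_i$, while the reverse inclusion holds since $M_i$ is a left $B$-module. Since $M_i \cap Kv$ is a nonzero finitely generated $R$-submodule of $Kv$ and $R$ is a DVR, $M_i \cap Kv = F_mKv$ for some $m \geq n-i$. If $m > n-i$, then $M_i = BF_mKv = F_mAv$ forces $F_{-1}KM_i = F_{m-1}Av \supset F_{n-i}Av = M_i$, hence $F_{-1}KM_i = M_i$. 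Since $F_{-1}K \subset J(F_0K)$ by \lemref{fieldjac} and $M_i$ is a finitely generated $R$-module (via \corref{order}), Nakayama's lemma gives $M_i = 0$, a contradiction. This is where the DVR hypothesis, the order structure of $B$, and the irreducibility of $M$ (through the structure of its chain) actually enter; the ``primitivity normalization'' you describe is not available as a substitute.
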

\begin{proof}
We have $AM = KM = Av$ by \propref{principal}, and also $B(M) = 0$ and $Av \in \BS(A)$. Since $v \in KM, \lambda v \in M$ for some $\lambda \in K^*$ so we may assume $v$ is chosen so that $v \in M$. We have a subglider
$$\begin{array}{ccccccccc}
M = M_0 & \supset & M_1 & \supset & \ldots & \supset & M_n & \supset & \ldots\\
\cup && \cup &&&& \cup &\\
Bv & \supset & F_{-1}Av & \supset & \ldots & \supset & F_{-n}Av & \supset & \ldots
\end{array}$$
Since $FK$ is strong, $FA$ is strong too and it follows for $n$ that $F_nKF_0A = F_nA$. Since $B(M) = 0, B(Bv) = 0$ too and so no $F_{-n}Av \supset F_{-n}Kv$ is zero, thus the glider $BV$ above has to be trivial of type $T_3$, say $Bv = M_n$ for some $n \geq 0$. Then $M = F_nAM_n = F_nKM_n$ yields $M_i = F_{n-i}Av$ for $i \geq 0$. Since $FK$ is a valuation filtration, we know that $F_nK \supset \ldots \supset F_{n-i}K \supset \ldots$, is an irreducible $FK$-glider hence an element of $\GBS_F(K) \subset \GBS(K)$. But then $K \cong Kv$ yields that $F_nKv \supset F_{n-1}Kv \supset \ldots$, is also an irreducible glider in $Kv$. Now consider the chain
$$Kv  \supset M_i \cap Kv \supset F_{n-i}Kv.$$
Then $B(M_i \cap Kv)$ contains $BF_{n-i}Kv = F_{n-i}Av = M_i$, thus $B(M_i \cap Kv) = M_i$. If for some $M_i$ we would have $M_i \cap Kv \supsetneq F_{n-i}Kv$ then since $FK$ is a valuation filtration $M_i \cap Kv = F_mKv$ with $m > n -i$ and then $M_i = B(M_i \cap Kv) = BF_mKv = F_mAv \supset BF_{n-i}Kv = M_i$ with $m > n-i$. Thus $F_{-1}KM_i = M_i$ and since $FK$ is strong $F_{-1}K \subset J(F_0K)$. Now $B$ is prime since $BK$ is a csa and $Z(B) = F_0K$ is a DVR hence a Noetherian ring. The same reasoning as in the proof of \corref{order} shows that  $B$ is a finitely generated $Z(B)$-module, and thus $F_{-1}KM_i = M_i$ with $M_i = BF_{n-i}Kv$ also a finitely generated $F_0K$-module, yields $M_i = 0$, a contradiction. Hence for all $M_i \neq 0$ we thus get $M_i \cap Kv = F_{n-i}Kv$. If we choose another generator $w$ for $Av$, thus $Av = Aw$ then $w$ in some $F_mA - F_{m-1}A$ and the change from $v$ to $w$ in the foregoing is just coming down to a shift by $F_{m-n}K$ in the glider of $K$ we found, consequently the $Av$ define the element of $BS(A)$, the $FA$ corresponds to the valuation $F_0K$ and the choice of generator for $KM = Av$ yields a shift of the $K$-glider $F_0K \supset F_{-1}Kv \supset \ldots \supset F_{-n}K \supset \ldots$. Since we may replace $v$ by any $\lambda v$ with $\lambda \in K^*$, it is clear that all shifts over $\mathbb{Z}$ appear and thus we finally arrive at 
$$\GBS^{FK}(A) = \BS(A) \times  \mathbb{Z}.$$
as sets.
\end{proof}

By the previous section we know that we obtain the whole off $\GBS(K)$ by running over all valuation filtrations on $K$. Hence, we define the relative Glider-Brauer-Severi variety 
$$\GBS^K(A) = \bigcup_{FK {\rm~valuation~filtration}} \GBS^{FK}(A).$$

\begin{corollary}\corlabel{GBScsa}
We have a bijection as sets:
$$\GBS^K(A) = \BS(A) \times R(K) \times \mathbb{Z} = \BS(A) \times \GBS(K).$$
\end{corollary}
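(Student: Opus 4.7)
The plan is to assemble the corollary directly from \propref{relativecsa} and \theref{GBSfield}, with the main bookkeeping task being to check that the union defining $\GBS^K(A)$ is disjoint as the valuation filtration $FK$ varies, so that the resulting bijection onto $\BS(A) \times R(K) \times \mathbb{Z}$ is well defined.

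First I would unpack the definition $\GBS^K(A) = \bigcup_{FK} \GBS^{FK}(A)$, where the union runs over all valuation filtrations $FK$ on $K$. Recall that the Riemann surface $R(K)$ is by definition the set of discrete valuation rings of $K$, and that a valuation filtration on $K$ is determined by, and determines, its degree-zero ring $F_0K = O_v$, so valuation filtrations on $K$ are in natural bijection with $R(K)$. For each fixed $FK$, \propref{relativecsa} gives a bijection $\GBS^{FK}(A) \cong \BS(A) \times \mathbb{Z}$, so naively one obtains a surjection $\bigsqcup_{O_v \in R(K)} \BS(A) \times \mathbb{Z} \twoheadrightarrow \GBS^K(A)$.

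Next I would argue injectivity, i.e.\ that different valuation filtrations on $K$ produce disjoint sets of irreducible glider ideals in $A$. Given $M \in \GBS^{FK}(A)$, \propref{relativecsa} describes $M$ explicitly as the chain $M_i = F_{n-i}Av$ and records that $M_i \cap Kv = F_{n-i}Kv$, so the induced $FK$-glider on $Kv \cong K$ lies in $\GBS_F(K)$. By the analysis in \secref{GBSfield} (in particular \exaref{DVR} and \propref{fieldstrong}), the underlying DVR $O_v$ can be recovered from any element of $\GBS_{F^v}(K)$, since it is the common degree-zero ring of the induced valuation filtration. Hence the valuation filtration $FK$ is intrinsic to $M$, and the pieces $\GBS^{FK}(A)$ for distinct $FK$ are genuinely disjoint. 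This produces the bijection
\[
\GBS^K(A) \;\longleftrightarrow\; \BS(A) \times R(K) \times \mathbb{Z}.
\]

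Finally, to obtain the second equality $\BS(A) \times R(K) \times \mathbb{Z} = \BS(A) \times \GBS(K)$ I would invoke \theref{GBSfield}, which identifies $\GBS(K) = R(K) \times \mathbb{Z}$; substituting gives the stated form. The only subtle point, and the one I would double-check, is the disjointness step above: one must be sure that two valuation filtrations $FK \neq F'K$ cannot give rise to the same irreducible glider ideal in $A$ through different ambient filtrations $FA$ and $F'A$, but this is forced because \propref{relativecsa} shows the induced filtration on the line $Kv$ is exactly a shift of $FK$, so $F_0K$ is recoverable from the glider. Everything else is a direct combination of results already proved.
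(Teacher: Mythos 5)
Your proposal is correct and follows exactly the route the paper intends: the corollary is stated without proof as an immediate consequence of \propref{relativecsa} (giving $\GBS^{FK}(A) = \BS(A) \times \mathbb{Z}$ for each valuation filtration) together with the definition of $\GBS^K(A)$ as the union over valuation filtrations and \theref{GBSfield}. Your explicit verification that the union is disjoint --- recovering $O_v$ as the (left) order of the chain of fractional ideals $M_i \cap Kv$, which is shift-invariant --- is a detail the paper leaves implicit, and it is a worthwhile addition.
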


\corref{order} indicated a link between irreducible gliders and the theory of orders. This indication also manifests itself in the relative Glider-Brauer-Severi varieties of central simple algebras when $FK$ is a valuation filtration. Before we exploit this, let us recall a few facts about maximal orders over a DVR $R$. For a detailed overview, the reader is referred to \cite{Rein}. A maximal $R$-order in a central simple algebra $M_n(D)$ is conjugate to $M_n(\Lambda)$ where $\Lambda$ is the unique maximal $R$-order in $D$, a central division algebra. In fact, $\Lambda$ is the integral closure of $R$ in $D$. The 1- and 2-sided ideals in $\Lambda$ are of the form $\pi^i \Lambda$. It follows that maximal $R$-orders in $M_n(D)$ are also principal ideal rings and there exists a uniformizer $\Pi$ generating these ideals.

\begin{proposition}\proplabel{e1}
Let $B$ be a maximal $R$-order in $A$ with filtration $FA$ extending $FK$. If $\GBS_F(A) \neq \emptyset$ then the ramification index $e$ is 1.
\end{proposition}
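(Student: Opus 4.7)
The plan is to exploit the structure of an irreducible glider $M \in \GBS_F(A)$ established in \propref{relativecsa}, and to read off the ramification from the irreducible quotient at the top of the body $Bv$. First, I would pick $M \in \GBS_F(A)$ and invoke \propref{principal} together with the structural analysis immediately preceding \propref{relativecsa} to obtain $n \in \mathbb{Z}$ and $v \in A$ with $Av \in \BS(A)$ such that $M_i = F_{n-i}Av$ for every $i \geq 0$. In particular $M_n = Bv$ and $M_{n+1} = F_{-1}A\cdot v$. Since $FA$ extends the valuation filtration $FK$, the identity $F_mA = F_mK\cdot B$ proved just before \propref{relativecsa} applies in all degrees, so if $\pi$ is a uniformizer of $R = F_0K$, then $F_{-1}A = \pi B$. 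The maximal order facts recalled before the proposition give $\pi B = P^e$ where $P = J(B)$ is the unique maximal two-sided ideal of $B$, and therefore $M_{n+1} = P^e v$.

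Next I would extract the irreducibility of the quotient $M_n/M_{n+1}$. By \lemref{infinitelength} we have $\el(M) = +\infty$, so $M_n \supsetneq M_{n+1}$; then \lemref{irrinfinite} yields that
\[ M_n/M_{n+1} = Bv/P^e v \]
is a simple left $B$-module. The heart of the argument is then the descending chain of left $B$-submodules
\[ Bv \supset Pv \supset P^2 v \supset \cdots \supset P^e v, \]
which I would show to be \emph{strictly} descending at every step. Each $P^i v$ is a finitely generated $B$-module (as $B$ is Noetherian), and an equality $P^i v = P^{i+1} v = P\cdot P^i v$ would, by Nakayama's lemma applied with $P = J(B)$, force $P^i v = 0$, whence $\pi v \in P^e v \subset P^i v = 0$, contradicting the $R$-torsion-freeness of $Av \subset A$.

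Once strictness of the chain is in hand, we have $e+1$ distinct left $B$-submodules between $M_{n+1}$ and $M_n$, so simplicity of $M_n/M_{n+1}$ leaves only the possibility $e = 1$. The main technical ingredient is thus the Nakayama step, i.e.\ the fact that $P$ is simultaneously the unique maximal two-sided ideal of $B$ and its Jacobson radical; everything else is bookkeeping with the already-established explicit form of irreducible gliders that extend a valuation filtration.
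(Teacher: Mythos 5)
Your proof is correct and follows essentially the same route as the paper's: both identify $M_n/M_{n+1} \cong Bv/\pi Bv = Bv/P^e v$ as a simple left $B$-module and read off $e=1$ from it. The only difference is the bookkeeping used to see that this quotient has length $e$: the paper invokes the fact that a maximal order over a DVR is a principal ideal ring with a uniformizer $\Pi$ (so the quotient is $\Pi^a Bv/\Pi^{a+e}Bv$), whereas you establish strictness of $Bv \supset Pv \supset \cdots \supset P^e v$ via Nakayama's lemma with $P = J(B)$.
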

\begin{proof}
If $\Pi, \pi$ are uniformizers for $B, R$ respectively, then $\pi B = \Pi^e B$. If $M \in \GBS_F(A)$, then there exists $m \in \mathbb{Z}$, $Av \in P(A)$ such that $M = F_mKBv = \pi^mBv = \Pi^{me}Bv$ and $M_1 = F_{m-1}KBv = \pi^{m-1}Bv = \Pi^{(m-1)e}Bv$. The quotient $M/M_1$ must be a simple left $B$-module, hence $e = 1$. 
\end{proof}

\begin{example}
Consider $R = \mathbb{Z}_{(2)}[i] \subset K = \mathbb{Q}(i)$. Then $M_n(R)$ is a maximal $R$-order in $A = M_n(K)$, with ramification index 2 at the prime ideal $P = M_n((1+i)R)$: $P^2 = M_n(2R) = (2)M_n(R)$. Let $Av \leq_l A$ correspond to $v \in \mathbb{P}^n$, i.e. to 
$$\Bigg\{ \begin{pmatrix} (\sum_{i=1}^n a_{i1}) v \\ (\sum_{i=1}^n a_{i2}) v \\ \ldots \\ (\sum_{i=1}^n a_{in}) v \end{pmatrix}~\vline~ a_{ij} \in K \Bigg\}.$$
From this it follows readily that $M_n(R)v/M_n(2R)v \cong M_n(R/2R) \ov{v}$ is not irreducible as left $M_n(R)$-module.
\end{example}

There is an obvious way to also reach the maximal orders with ramification index strictly bigger than 1. Indeed, we can vary allow the valuation filtration $FK$ to have a higher step size $f > 1$. If $FA$ is a filtration extending $FK$, then we only have that $F_{nf}A = F_{nf}KF_0A$ for all $n \in \mathbb{Z}$. The filtration $FA$ is therefore not strong in general. If we do impose the strongness condition, we get a generalization of \propref{e1}.

\begin{proposition}
Let $FK$ be the strong $f$-step filtration associated to $R = O^v$. Let $FA$ be a strong filtration extending $FK$ such that $F_0A = B$ is a maximal $R$-order. Then  $\GBS_F(A) \neq \emptyset$ implies $e = f$ is the ramification index at $P$.
\end{proposition}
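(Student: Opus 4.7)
The plan is to combine strongness of $FA$ with the theory of maximal orders over a DVR to describe the negative filtration as powers of $P$, then to extract a $B$-length condition from the irreducibility of a glider, and finally to match the result against the extension of $FK$.

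I would begin by noting that, since $FA$ is strong, $F_{-1}A \cdot F_{1}A = F_{1}A \cdot F_{-1}A = B$, so $F_{-1}A$ is an invertible two-sided $B$-bimodule contained in $B$. As $B$ is a maximal $R$-order in the central simple algebra $A$ and $R$ is a DVR, the group of invertible two-sided fractional $B$-ideals is the infinite cyclic group generated by the unique maximal two-sided ideal $P$ of $B$ (see \cite{Rein}), so $F_{-1}A = P^{k}$ for some $k \geq 1$ (the strict inclusion $F_{-1}A \subsetneq B$ forced by separatedness prevents $k = 0$). Iterating and using strongness, as at the start of the proof of \theref{maxorder}, gives $F_{-n}A = P^{kn}$ for every $n \geq 0$.

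Next, I would pick any $M \in \GBS_{F}(A)$. By \propref{principal}, $AM = Av$ for some minimal left ideal $Av \in \BS(A)$ with $B(M)=0$, so in view of the discussion around \eqref{formstrong} we have $M_{i} = F_{m-i}Av = P^{k(i-m)}v$ for some $m \in \mathbb{Z}$. By \lemref{irrinfinite}, every nonzero quotient $M_{i}/M_{i+1}$ must be a simple left $B$-module. Writing $B \cong M_{n}(\Lambda)$ with $\Lambda$ the maximal $R$-order in the underlying division algebra and $P = \Pi B = B\Pi$ for a uniformizer $\Pi$ of $\Lambda$, each slice $P^{j}v/P^{j+1}v$ is isomorphic to the unique simple $B$-module $Bv/Pv$ (after choosing $v$ so that $Bv \cong \Lambda^{n}$ as $\Lambda$-lattice); stringing $k$ of them together shows that $P^{kn}v/P^{k(n+1)}v$ has $B$-length equal to $k$. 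Simplicity therefore forces $k = 1$, so $F_{-n}A = P^{n}$ for all $n \geq 0$.

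Finally, I would read off the conclusion from the extension hypothesis. Since $pB = P^{e}$, the intersection formula $P^{n}\cap K = M^{\lceil n/e \rceil}$ used in the proof of \theref{maxorder}, combined with $F_{-n}K = M^{\lceil n/f \rceil}$ coming from the strong $f$-step valuation filtration on $K$, yields
\[
\lceil n/e \rceil = \lceil n/f \rceil \qquad \text{for all } n \geq 0.
\]
Setting $n = e$ gives $\lceil e/f \rceil = 1$, hence $e \leq f$, and setting $n = f$ gives $\lceil f/e \rceil = 1$, hence $f \leq e$; thus $e = f$. The decisive step, and the one most likely to demand care, is the $B$-length computation: recognising that $P$ is principal as a two-sided ideal $\Pi B$ of the maximal order, that $Bv$ is a $\Lambda$-lattice of rank $n$, and that multiplication by $\Pi$ implements the $P$-adic filtration with simple quotients all isomorphic to $Bv/Pv$. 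Once this is in place, the remainder is elementary ceiling-function arithmetic applied to the extension condition.
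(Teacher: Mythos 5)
Your proof is correct and rests on the same two pillars as the paper's: the structure of irreducible gliders under a strong filtration (so that $M_i = F_{m-i}Av$ with each nonzero quotient simple by \lemref{irrinfinite}), and the $\Pi$-adic description of ideals in a maximal order over a DVR. The organization, however, is genuinely different. The paper's proof fixes $i$ with $M_i = F_{-mf}KBv = \Pi^{me}Bv$ and $M_{i+f} = \Pi^{(m+1)e}Bv$, and then compares the $B$-length of $M_i/M_{i+f}$ computed two ways: $f$ from the $f$ strict steps of the glider (each simple by irreducibility), and $e$ from the $P$-adic chain $\Pi^{me}Bv \supsetneq \cdots \supsetneq \Pi^{(m+1)e}Bv$; equality gives $f=e$ directly. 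You instead first identify $F_{-1}A$ with $P^k$ (invertibility of $F_{-1}A$ in a maximal order over a DVR), use the simple-quotient condition on a \emph{single} step $M_i/M_{i+1}$ to force $k=1$, and then import the extension hypothesis $F_{-n}A \cap K = F_{-n}K$ together with the intersection formula $P^n \cap K = \mathfrak{m}^{\lceil n/e\rceil}$ to get $\lceil n/e\rceil = \lceil n/f\rceil$ for all $n$, hence $e=f$. This is slightly longer, but it cleanly separates the glider-theoretic input ($k=1$, i.e.\ the negative filtration of $A$ is exactly the $P$-adic one) from the number-theoretic input (ceiling arithmetic as in \theref{maxorder}). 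One remark: you cannot dispense with the simplicity step, since the ceiling equations alone with $F_{-1}A = P^k$ only force $e = kf$, not $k=1$; your argument correctly supplies the missing constraint. A minor point: the reference to ``the start of the proof of \theref{maxorder}'' is to the Dedekind situation with several primes, whereas here $R$ is a DVR so $F_{-1}A$ is a power of the unique prime $P$; this is immediate and not a gap.
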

\begin{proof}
The filtration being strong implies that $M = (F_nA)_\g v \in \GBS_F(A)$, for some $n \in \mathbb{Z}$ and $v \in A$. There exists $i \geq 0$ and $m \in \mathbb{Z}$ such that $M_i = F_{-mf}Av = F_{-mf}KBv = \Pi^{ne}v$ and $M_{i+f} = \Pi^{(n+1)e}v$. Strongness of $FA$ furthermore implies that $F_mAv \neq F_{m-1}Av$ for all $m \leq n$, whence $f = e$.
\end{proof}

\begin{proposition}
Let $A$ be a csa with strong filtration $FA$ extending $FK$ an $f$-step valuation filtration with $F_0K$ a DVR and put $F_0A = B$. Let $M$ be non-empty in $\GBS_F(A)$, then $AM = KM = Av$ with $Av \in \BS(A)$, $v$ any generator of an $A$-module. Then there exists $0 \leq d < f$ and $m \in \mathbb{N}$ such that $M$ is the chain
\begin{equation}\nonumber
\resizebox{0.9 \hsize}{!}{$\begin{array}{ccccccccccccccccccc}
F_dAF_{mf}Kv &\supset& F_{d-1}AF_{mf}Kv &\supset &\ldots& \supset& F_1AF_{mf}Kv& \supset& F_{mf}Kv& \supset &F_{f-1}AF_{(m-1)f}Kv& \supset& \ldots& \supset& F_1AF_{(m-1)f}Kv& \supset& F_{(m-2)}Kv& \supset& \ldots\\
\veq && \veq &&  && \veq && \veq && \veq &&  && \veq && \veq &&\\
M &\supset& M_1 &\supset& \ldots &\supset& M_{d-1} &\supset& M_d &\supset&  M_{d+1} &\supset& \ldots &\supset& M_{d+f -1} &\supset& M_{d+f} &\supset& \ldots
\end{array}$}
\end{equation}
Moreover, the chain $M_d \supset M_{d+f} \supset M_{d+2f} \supset \ldots$ defines an element in $\GBS_F(K)$.
\end{proposition}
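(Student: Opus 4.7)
The proof parallels that of \propref{relativecsa}, with the $f$-step structure of $FK$ producing the length-$f$ blocks in the chain.

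I will start by invoking \propref{principal}: $AM = KM = Av$ with $Av \in \BS(A)$ and $B(M)=0$. Because $M \neq 0$ and $Av$ is $A$-irreducible, replacing the generator by a nonzero element of $M$ lets me assume $v \in M$. Then $Bv \supset F_{-1}Av \supset F_{-2}Av \supset \cdots$ is a subglider of $M$ (the inclusions $F_{-i}Av \subset M_i$ follow from the glider axiom applied to $v \in M_0$). Since $F_{-i}Kv \neq 0$ and $B(M)=0$, triviality of types $T_1$ and $T_2$ is excluded, so the subglider is of type $T_3$: $Bv = M_n$ and $F_{-i}Av = M_{\alpha(i)}$ for a monotone $\alpha$ with $\alpha(0)=n \geq 0$.

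Next I will determine $\alpha$ using strongness of $FA$. Multiplying $F_{-i}Av = M_{\alpha(i)}$ by $F_iA$ and applying $F_iA \cdot F_{-i}A = F_0A$ gives $Bv = M_n \subset M_{\alpha(i)-i}$, hence $\alpha(i) \leq n+i$; together with the type-$T_3$ inequality $\alpha(i)-i \geq \alpha(0)=n$ this forces $\alpha(i)=n+i$, i.e.\ $F_{-i}Av = M_{n+i}$. The mirror computation with $F_0A = F_{n-i}A \cdot F_{i-n}A$ applied to $M_i$ for $0 \leq i \leq n$ then produces $M_i = F_{n-i}Av$ for every $i \geq 0$.

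To reach the tabulated form, I write $n = mf + d$ with $0 \leq d < f$ and $m \in \mathbb{N}$. Strongness of $FA$ combined with $F_{jf}A = F_{jf}KB$ (as in \propref{strongCSA}) gives the factorisation $F_{jf+r}A = F_{jf}K \cdot F_rA$ for $0 \leq r < f$. Substituting into $M_i = F_{n-i}Av$: the milestones $i = d+kf$ yield $M_i = F_{(m-k)f}K \cdot Bv$, while the intermediate positions $i = d + kf + s$ with $0 < s < f$ yield $M_i = F_{f-s}A \cdot F_{(m-k-1)f}K \cdot Bv$; this reproduces exactly the chain in the statement (with $F_{(m-k)f}Kv$ standing, as in the proof of \propref{relativecsa}, for the $B$-module $F_{(m-k)f}K \cdot Bv$). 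For the last assertion, intersecting the milestone subchain with $Kv$ and transporting along the $K$-linear isomorphism $Kv \cong K$ yields $F_{mf}K \supset F_{(m-1)f}K \supset F_{(m-2)f}K \supset \cdots$, which by \secref{GBSfield} is a shift of the irreducible $FK$-glider $(F_{mf}K)_*$ and so an element of $\GBS_F(K)$.

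The decisive technical step is the rigidity $\alpha(i) = n+i$ in the second paragraph: strongness of $FA$ is what both kills any ``lazy'' type-$T_3$ trivialisation and forces the top stages $M_0, \ldots, M_{n-1}$ to be pulled back from $M_n = Bv$ via the positive part. Everything downstream is $f$-step bookkeeping against the factorisation $F_{jf+r}A = F_{jf}K \cdot F_rA$.
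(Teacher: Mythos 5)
Your proof is correct and follows exactly the route the paper intends — the paper's own proof of this proposition is a one-liner (``Essentially just modify the proof of \propref{relativecsa}''), and you have carried out that modification faithfully: quote \propref{principal} to get $AM=Av$ and $B(M)=0$, normalise $v\in M$, analyse the type-$T_3$ triviality of the subglider $Bv\supset F_{-1}Av\supset\cdots$ to get $Bv=M_n$, then use strongness to identify all the $M_i$, and finally do the $f$-step bookkeeping via $F_{jf}A=F_{jf}KB$.

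One small logical slip worth flagging: the inference ``$Bv=M_n\subset M_{\alpha(i)-i}$, hence $\alpha(i)\le n+i$'' tacitly assumes the chain $M\supsetneq M_1\supsetneq\cdots$ is strictly decreasing (otherwise $M_n\subset M_{\alpha(i)-i}$ only says the two terms coincide, not that the index is $\le n$). This can be justified — if $M_j=M_{j+1}$ then strongness forces $F_{\pm 1}A\,M_{j+1}=M_{j+1}$, hence $M_{j+1}$ is a left $A$-module, and \corref{infinitelength} then contradicts $M\ne AM$ — but the cleaner observation is that you don't need to pin down $\alpha$ at all: your ``mirror computation'' $M_i = F_0A\,M_i = F_{n-i}A\,F_{i-n}A\,M_i \subset F_{n-i}A\,M_n \subset M_i$ already yields $M_i=F_{n-i}Av$ for every $i\ge 0$ directly from $M_n=Bv$ and strongness, with the type-$T_3$ map never entering. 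I'd recommend dropping the $\alpha$-rigidity paragraph and just running the mirror computation for all $i$; everything downstream (the Euclidean split $n=mf+d$, the factorisation $F_{jf+r}A=F_{jf}K\cdot F_rA$, and the intersection with $Kv$ for the last assertion, argued as in the proof of \propref{relativecsa}) is fine as written.
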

 \begin{proof}
 Essentially just modify the proof of \propref{relativecsa}.
 \end{proof}
 
We observe that unlike the case for $f = 1$, strong filtrations $FA$ and $F'A$ extending an $f$-step valuation filtration $FK$ have different $\GBS_F(A) \neq \GBS_{F'}(A)$ if $F_0A \neq F'_0A$. In particular, $\GBS^{F^fK}$ contains as a subset
$$\BS(A) \times A(R,f) \times \mathbb{Z},$$
where $A(R,f)$ denotes the set of all maximal $R$-orders in $A$ with ramification index $f$. When considering all discrete valuations on $K$, for every point of the Riemann surface $R(K)$ we obtain differences corresponding to the maximal orders of ramification index $f$ lying over it. It remains a question whether there exist filtrations $FA$ extending an $f$-step valuation filtration with non-empty $\GBS_F(A)$ and which are not strong.

\section{Tensor product with a field extension}\seclabel{tensor}

Let $A$ be a csa over $K$ with separated, exhaustive filtration $FA$ inducing a filtration $FK$ on $K$. Let $L/K$ be a field extension and choose a filtration $FL$ on $L$ inducing $FK$ on $K$. We define a filtration on $A \ot_K L$ by putting
$$f_q(A \ot_K L) = \sum_{k \leq q} F_kA \ot F_{q-k}L,$$
where the tensor product is taken over $K$. Note that $F_kA$ and $F_{q-k}L$ are not necessarily $K$-vector spaces, but we consider them inside $K$, resp. $L$. 

$$\begin{array}{ccccc}
f_{-2} & f_{-1} & f_0 & f_1 & f_2 \\
\hline
&&&& F_2A \ot F_0L \\
&&& F_1A \ot F_0L & F_1A \ot F_1L\\
&& F_0A \ot F_0L & F_0A \ot F_1L & F_0A \ot F_2L\\
& F_{-1}A \ot F_0L & F_{-1}A \ot F_1L & F_{-1}A \ot F_2L & F_{-1}A \ot F_3L\\
F_{-2}A \ot F_0L & F_{-2}A \ot F_1L & F_{-2}A \ot F_2L & F_{-2}A \ot F_3L & F_{-2}A \ot F_4L
\end{array}$$

Let $\Omega \supset M$ be a (left) $FA$-glider. Inside $\Omega \ot_K L$ we want to define a glider representation for $f(A \ot_K L)$, which we denote by $M \ot L$. We define
\begin{equation}\label{chain}
(M \ot L)_p = \sum_{i \geq p} M_i \ot F_{i-p}L.
\end{equation}

$$\begin{array}{ccc}
0 & 1 & 2 \\
\hline
M_0 \ot F_0L &&\\
M_1 \ot F_1L & M_1 \ot F_0L &\\
M_2 \ot F_2L & M_2 \ot F_1L & M_2 \ot F_0L
\end{array}$$

for $a \ot s \in f_j(A \ot_K L)$ and $m \ot t \in (M \ot L)_i$ with $j \leq i$, we define
\begin{equation} \label{action}
 (a \ot s) \cdot (m \ot t) = (am \ot st).
\end{equation}
\begin{lemma}
The left $A \ot_K L$-module $\Omega \ot_K L$ with chain and partial actions defined by \eqref{chain} and \eqref{action} yields a left $f(A\ot_K L)$-glider representation.
\end{lemma}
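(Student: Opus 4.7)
The plan is to check three things: (i) the $(M \ot L)_p$ form a descending chain of additive subgroups of $\Omega \ot_K L$; (ii) the formula \eqref{action} on decomposable tensors is consistent with the canonical left $A \ot_K L$-action already present on $\Omega \ot_K L$; and (iii) the filtration axiom $f_j(A \ot_K L)(M \ot L)_i \subset (M \ot L)_{i-j}$ holds whenever $j \leq i$.

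Point (ii) is essentially automatic: $\Omega \ot_K L$ is canonically a left $A \ot_K L$-module via $(a \ot s)(\omega \ot t) = a\omega \ot st$, and \eqref{action} is just the restriction of this ambient action to decomposable tensors drawn from the indicated subgroups. Here we interpret the symbol $M_i \ot F_{i-p}L$ as the additive subgroup of $\Omega \ot_K L$ generated by $\{ m \ot t : m \in M_i,\ t \in F_{i-p}L \}$; no further well-definedness question arises, since everything takes place inside an honest module. For (i), observe that for any $i \geq p+1$,
\[
M_i \ot F_{i-p-1}L \subset M_i \ot F_{i-p}L \subset (M \ot L)_p,
\]
using $F_{i-p-1}L \subset F_{i-p}L$. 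Summing over $i \geq p+1$ gives the inclusion $(M \ot L)_{p+1} \subset (M \ot L)_p$.

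The content lies in (iii). By additivity it is enough to test the inclusion on generators, so fix $a \in F_kA$, $s \in F_{j-k}L$ (so that $a \ot s$ is a typical generator of $f_j(A \ot_K L)$ with $k \leq j$) and $m \in M_{i'}$, $t \in F_{i'-i}L$ (so that $m \ot t$ is a typical generator of $(M \ot L)_i$ with $i' \geq i$), under the standing hypothesis $j \leq i$. Then $k \leq j \leq i \leq i'$, so the partial glider action gives $F_kA \cdot M_{i'} \subset M_{i'-k}$, and the filtration property of $FL$ gives $F_{j-k}L \cdot F_{i'-i}L \subset F_{(j-k)+(i'-i)}L$. Setting $i'' := i' - k$, we have $i'' \geq i - j$ and
\[
(a \ot s)(m \ot t) \;=\; am \ot st \;\in\; M_{i''} \ot F_{i''-(i-j)}L \;\subset\; (M \ot L)_{i-j},
\]
which is what the glider axiom demands.

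The argument is essentially bookkeeping; the one place requiring care is to keep the two indices $i$ and $i'$ separate, so that the excess $i' - i$ available in the $L$-factor pairs up correctly with the new glider level $i'' = i' - k$ after the partial $FA$-action has been applied. Beyond that matching, I do not anticipate any obstacle.
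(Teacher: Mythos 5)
Your proof is correct and follows essentially the same computation as the paper: expand the product over generators, apply the glider axiom $F_kA \cdot M_{i'} \subset M_{i'-k}$ (valid since $k \leq j \leq i \leq i'$) to the first factor and the filtration axiom to the second, then verify that the resulting pair of indices sits in $(M\ot L)_{i-j}$ via the reindexing $i'' = i' - k \geq i - j$. The paper performs the same reindexing (writing $h = l - k$) and omits your preliminary remarks (i)--(ii), which are routine; the core argument is identical.
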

\begin{proof}
Let $j \leq i$, then 
\begin{eqnarray*}
\Big(\sum_{k \leq j} F_kA \ot F_{j-k}L\Big)\Big(\sum_{l \geq i} M_l \ot F_{l -i}L\Big) &\subset& \sum_{k \leq j}\sum_{l \geq i} F_kAM_l \ot F_{j-k}L F_{l-i}L\\
&\subset& \sum_{k \leq j} \sum_{l \geq i} M_{l -k} \ot F_{l -k +j-i} \\
&\subset& \sum_{h \geq l-k} M_h \ot F_{h + j -i} = (M \ot L)_{i-j}.
\end{eqnarray*}
\end{proof}

In case we start from a strong filtration $FK$ on $K$, the extended filtration on $L$ satisfies $F_nL = F_nKF_0L$ and the filtration on $A \ot_K L$ becomes $FA \ot F_0L$ and for the glider filtration we obtain the chain
$$M \ot F_0L \supset M_1 \ot F_0L \supset M_2 \ot F_0L \supset \ldots$$

\begin{proposition}\proplabel{map}
When $FK$ is strong, we obtain a map
$$\GBS_F(A) \to \GBS_f(A \ot_K L).$$
\end{proposition}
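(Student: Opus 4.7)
The plan is to transport the structural description of elements of $\GBS_F(A)$ from \secref{GBScsa} through the tensor product with $L$, and then recognize the image as an element of the analogous classification for the csa $A \ot_K L$ over $L$.

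First I would check that strongness of $FK$ propagates to $FA$. The filtration inclusions $F_{-n}K \subset F_{-n}A$ together with $F_nK \cdot F_{-n}K = F_0K$ force $F_nA = F_nK \cdot F_0A$ for every $n \in \mathbb{Z}$, so $FA$ is itself strong. This also simplifies the tensor filtration to $f_q(A \ot_K L) = F_qA \ot F_0L = F_qK \cdot f_0(A\ot_K L)$, which is therefore separated, exhaustive, non-bounded and strong on the $L$-central simple algebra $A \ot_K L$.

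Next, take $M \in \GBS_F(A)$. By \propref{principal} and the discussion preceding \propref{relativecsa}, strongness of $FA$ forces $M = (F_mAv)_*$, i.e.\ $M_p = F_{m-p}Av$, with $Av \in \BS(A)$ and $m \in \mathbb{Z}$. Substituting this into \eqref{chain} and using $F_{i-p}L = F_{i-p}K\cdot F_0L$, the chain collapses level by level to
\[
(M \ot L)_p \;=\; F_{m-p}Av \ot F_0L \;=\; f_{m-p}(A \ot_K L)\cdot(v \ot 1).
\]
Since reduced dimension is preserved by scalar extension, $(A\ot_K L)(v\ot 1) = Av \ot_K L$ remains a minimal left ideal of $A \ot_K L$, hence an element of $\BS(A\ot_K L)$. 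Thus $M \ot L$ has precisely the shape $(f_m(A\ot_K L)w)_*$ for $w = v \ot 1$ with $(A\ot_K L)w \in \BS(A\ot_K L)$.

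Invoking the classification of \propref{relativecsa} for $A \ot_K L$ over $L$ (with the strong filtration $f$) then shows every such chain is an irreducible $f$-glider ideal, so $M \ot L \in \GBS_f(A \ot_K L)$ and the map is well defined. The main obstacle is that \propref{relativecsa} is stated under the stronger hypothesis that the base filtration is the valuation filtration of a DVR, while the present set-up only demands strongness. If this gap cannot be bridged by choosing $FL$ to refine a valuation filtration on $L$, the cleanest remedy is to verify irreducibility of $(f_m(A\ot_K L)w)_*$ directly: given a subglider $N$, one runs through the trivial types $T_1$–$T_3$ using strongness of $f$ and minimality of $Av \ot_K L$, essentially transcribing the arguments of \propref{principal} to the extended algebra.
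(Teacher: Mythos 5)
Your opening reductions are fine and agree with the paper: strongness of $FK$ does force $F_nA = F_nKF_0A$, hence $FA$ strong, and the tensor chain does collapse to $(M\ot L)_p = M_p\ot F_0L = f_{m-p}(A\ot_K L)(v\ot 1)$. But the proof stops exactly where the real work begins. Irreducibility of this chain is never established. You cannot get it from \propref{relativecsa}: first, as you concede, its hypothesis (a valuation filtration of a DVR on the base field) is not available here — $FK$ is merely strong, and $F_0L$ need not be a DVR; second, and more fundamentally, \propref{relativecsa} and \propref{principal} run in the wrong direction — they derive the shape of a glider \emph{assuming} it is irreducible, so "transcribing" them cannot certify that a given chain \emph{is} irreducible. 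What must be shown is that an arbitrary nonzero $f(A\ot_K L)$-subglider $N$ of $M\ot F_0L$ is trivial of type $T_3$ with $\alpha=\mathrm{id}$, and the two ideas that make this possible are absent from your sketch: (i) using that $FA$ is unbounded below to clear denominators and produce a monomial $m_i\ot l$ inside every $N_i$ (a general element of $N_i$ is a sum of tensors, and subgliders of $M\ot F_0L$ are not themselves of split tensor form); and (ii) intersecting $N$ with the slice $M\ot F_0Kl$, which is isomorphic as a glider to $M$ itself and hence irreducible, then using strongness of $FA$ to move the monomial to the top level and force $\alpha=\mathrm{id}$. This slice-and-monomial argument is the entire content of the paper's proof.

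A secondary point: your justification that $(A\ot_K L)(v\ot 1)=Av\ot_K L$ is a minimal left ideal "since reduced dimension is preserved by scalar extension" does not work as stated. Reduced dimension is indeed preserved, but the index of $A\ot_K L$ can drop, so the threshold for minimality changes; e.g.\ a division algebra $Av=A$ splits into a sum of several minimal left ideals after a splitting extension. The paper asserts the same simplicity of $Av\ot_K L$ (and uses it only to conclude $(A\ot_K L)N=(A\ot_K L)(M\ot F_0L)$ for $N\neq 0$), so this is not a defect introduced by you, but the reason you give for it is not valid and should not be leaned on.
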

\begin{proof}
Take $M \in \GBS_F(A)$ and consider an $f(A\ot_K L)$-subglider $N \supset N_1 \supset \ldots$ of $M \ot F_0L \supset M_1 \ot F_0L \supset \ldots.$ By the previous section, we know that $\Omega = Av$ is a simple left ideal, whence so is $Av \ot_K L$. If $N = 0$, then there is nothing to prove. If $N \neq 0$, then it follows that $(A \ot_K L)N = (A \ot_K L)(M \ot_K F_0L)$. In particular, there exist $m \in M, l \in F_0L, a_i \in A, l_i \in F_0L, n_i \in N$ such that
$$m \ot l = \sum_{i=1}^t (a_i \ot l_i)n_i.$$
Because $FA$ is not bounded below, we can find elements $b \in A$ such that $ba_i \in F_0A$ and $bm \in M$. It follows that
$$bm \ot l = \sum_{i=1}^t (ba_i \ot l_i)n_i \in N.$$
In particular, we know that $N$ contains a monomial, say $m \ot l \in M \ot F_0L \cap N$. In fact, for every $i \geq 0$, $N_i$ contains a monomial $m_i \ot l$. Consider now the $FA \ot F_0K$-subglider
$$\begin{array}{cccccccc}
M \ot F_0Kl & \supset & M_1 \ot F_0Kl & \supset & M_2 \ot F_0Kl & \supset & \ldots \\
\cup && \cup && \cup && \\
N \cap (M \ot F_0Kl) & \supset & N_1 \cap (M_1 \ot F_0Kl) & \supset & N_2 \cap (M_2 \ot F_0Kl) & \supset & \ldots
\end{array}$$
Since $FA \ot F_0K \cong FA$ is an isomorphism of filtered rings and since $K$ is central in $A$, the glider $M \ot F_0Kl \supset M_1 \ot F_0Kl \supset \ldots$ is isomorphic to $M \supset M_1 \supset \ldots$, whence is irreducible. If $N \cap (M \ot F_0Kl)$ is trivial of type $T_1$, then there exists $n \geq 0$ such that for all $m \geq 0$
$$N_n \cap (M_n \ot F_0Kl) = N_{n+m} \cap (M_{n+m} \cap F_0Kl) = B(N \cap (M \ot F_0Kl)).$$
In this case, $N_n \cap (M_n \ot F_0Kl) =  B(N \cap (M \ot F_0Kl) \subset B(M \ot F_0Kl)$ would be a non-zero left $A$-module, which contradicts \propref{principal}. Since every $N_i$ contains a monomial of the form $m_i \ot l$, the subglider cannot be trivial of type $T_2$, so it is of type $T_3$ with monotone increasing map $\alpha$. Since we can chose any $l \in F_0L$ above, we can assume $l \in F_0L^\times$. Now, suppose that the monomial $m \ot l \in N$ is such that $m \in M_{s} - M_{s+1}$. Since $FA$ is strong, there exists $c \in F_sA - F_{s-1}A$, whence $(cm \ot l) \in (M \ot F_0L) \cap N$ and not in $M_1 \ot F_0L$. From this it follows that $\alpha = id$. Because $l$ is invertible in $F_0L$, it follows that 
$$M_{m} \ot F_0Ll = M_m \ot F_0L  \subset N_m \subset M_m \ot F_0L,$$
for every $m \geq 0$. 
\end{proof}

\begin{example}
Let $FK$ be the valuation filtration for $F_0K = O_v$ a DVR and let $O_w$ be a valuation extension of $O_v$ with fraction field $L /K$ and consider on $L$ the $w$-adic filtration. If $FA$ extends $FK$, then $f( A \ot_K L)$ extends $FL$, whence also $FK$. Since we have $f_n(A \ot_K L) = F_nA \ot O_w = F_nK(F_0A \ot O_w)$, \propref{map} yields a map
$$\GBS^{FK}(A) \to \GBS^{FK}(A \ot_K L),$$
which sends $M = (F_nKa)_\ast$ to $(F_nKb)_\ast$ if $Aa \ot_K L = (A \ot_K L)b \in \BS(A \ot_K L)$.
By choosing a valuation extension for every element in $R(K)$, we obtain a map
$$ \BS(A) \times \GBS(K) = \GBS^K(A) \to  \GBS^K(A \ot_K L) = \BS(A \ot_K L) \times \GBS(K).$$
This shows that the relative Glider Brauer-Severi variety $\GBS^K(A)$ is a twist of the relative Brauer-Severi variety of matrix algebra $M_n(L)$ for a suitable field extension $L/K$.
\end{example}

\section{The glider Brandt groupoid}\seclabel{Brandt}

Let $K$ be a field with separated, exhaustive filtration $FK$ such that $R = F_0K$ is a Dedekind domain. Consider a central simple $K$-algebra $A$. Classically, one defines the Brandt groupoid as the set of all normal ideals $M$ in $A$ with proper multiplication. Such a normal ideal $M$ is a finitely generated $R$-torsion free $R$-module such that $KM = A$. We will consider finitely generated $R$-torsion free $FK$-gliders 
$$M \supset M_1 \supset M_2 \supset \ldots$$
inside $A$ and such that $KM = A$. Define the following subrings of $A$
$$O_l(M_i) = \{ x \in A~\vline~ xM_i \subset M_i \},$$
$$O_l^{gl}(M) = \{ x \in A~\vline~ xM_i \subset M_i~\forall i \geq 0\}.$$

Suppose that $M$ is generated by $\{(m_i,d_i), 1 \leq i \leq t\}$, i.e. 
$$M_n = \sum_{i=1}^t F_{d_i-n}Km_i$$
for all $n \geq 0$. Moreover, \lemref{fieldjac} and a result from \cite{NVo1} entail that $m_i \in M_{d_i} - M_{d_i +1}$. 

\begin{lemma}
We have the equalities
$$O^{gl}_l(M) = \bigcap_{i=1}^t O_l(M_{d_i}) = \bigcap_{i \geq 0} O_l(M_i).$$
\end{lemma}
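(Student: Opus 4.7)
The chain of set inclusions $O_l^{gl}(M) \subset \bigcap_{i \geq 0} O_l(M_i) \subset \bigcap_{i=1}^{t} O_l(M_{d_i})$ is immediate: the first equality is a pure unwinding of the definition of $O_l^{gl}(M)$, and the second inclusion merely restricts the index set from $\mathbb{N}$ to the finite subset $\{d_1,\dots,d_t\}$. So the only real content of the lemma is the reverse containment $\bigcap_{i=1}^{t} O_l(M_{d_i}) \subset O_l^{gl}(M)$, i.e.\ that stability on the \emph{generator levels} $M_{d_i}$ propagates to stability on every $M_n$.

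To prove this, I take $x$ in the finite intersection and, for a fixed $n \geq 0$, an arbitrary element of $M_n$, which by the generation hypothesis has the shape $z = \sum_{i} y_i m_i$ with $y_i \in F_{d_i - n}K$. Since $m_i \in M_{d_i}$ and $x$ stabilises $M_{d_i}$ by assumption, $x m_i$ lies in $M_{d_i}$ and can be re-expanded using the generation formula at level $d_i$ to give
$$x m_i = \sum_{j=1}^{t} \lambda_{ij}\, m_j, \qquad \lambda_{ij} \in F_{d_j - d_i}K.$$
Such a decomposition is typically not unique, but I only need the existence of one witness.

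The last step is then a short filtration-degree bookkeeping that relies on the centrality of $K$ in $A$: sliding each $y_i \in K$ past $x$ and substituting yields
$$x z \;=\; \sum_{i} y_i (x m_i) \;=\; \sum_{i,j} (y_i \lambda_{ij})\, m_j,$$
and the multiplicative property of $FK$ gives $y_i \lambda_{ij} \in F_{d_i - n}K \cdot F_{d_j - d_i}K \subset F_{d_j - n}K$, so $x z \in \sum_j F_{d_j - n}K\, m_j = M_n$. Hence $xM_n \subset M_n$ for all $n \geq 0$, which puts $x$ in $O_l^{gl}(M)$ and closes the loop.

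There is no real obstacle: the argument is essentially "generation at the levels $d_i$, plus centrality of $K$, plus multiplicativity of $FK$". In particular, neither the $R$-torsion-freeness of $M$ nor the Dedekind hypothesis on $R$ is used here. The only thing to watch out for is the non-uniqueness of the $\lambda_{ij}$; since we only need an inclusion, picking any valid set of such coefficients is enough.
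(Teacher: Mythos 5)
Your proof is correct and takes essentially the same route as the paper's: take $x$ in the finite intersection, write a general element of $M_n$ via the generation formula, slide the $FK$-scalars past $x$ by centrality of $K$, and exploit that $x$ preserves each $M_{d_i}$. The only cosmetic difference is the final step: the paper passes directly from $xm_k \in M_{d_k}$ to $F_{d_k - n}K\,M_{d_k} \subset M_n$ via the glider property of $M$, whereas you re-expand $xm_i$ using the generation formula at level $d_i$ and then invoke multiplicativity of $FK$ --- these are interchangeable since the generation formula for $M_n$ already encodes the glider inclusion.
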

\begin{proof}
Suppose that $x \in \bigcap_{i=1}^t O_l(M_{d_i})$ and let $i \geq 0$. We have that
\begin{eqnarray*}
xM_0 &=& x\sum_{k=1}^tF_{d_k - i}Km_k = \sum_{k=1}^tF_{d_k - i}Kxm_k \\
&\subset& \sum_{k=1}^tF_{d_k - i}KM_{d_k} \subset M_i,
\end{eqnarray*}
whence $x \in O_l^{gl}(M)$. The result now follows since we have the inclusions
$$\bigcap_{i=1}^t O_l(M_{d_i}) \subset  O_l^{gl}(M) \subset  \bigcap_{i \geq 0} O_l(M_i) \subset \bigcap_{i=1}^t O_l(M_{d_i}).$$
\end{proof}

We denote $B = B^l = O_l^{gl}(M)$ and define a filtration $FA$ on $A$ by putting $F_0A = B$ and $F_nA = F_nKB$. 
\begin{proposition}
The chain $M \supset M_1 \supset \ldots$ is a left $FA$-glider ideal.
\end{proposition}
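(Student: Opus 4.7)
The plan is to verify directly the three defining conditions of a left $FA$-glider ideal for the descending chain $M \supset M_1 \supset \ldots$: that $\{F_nA\}$ is a genuine ring filtration on $A$, that $B = F_0A$ stabilises every $M_i$ on the left, and that the compatibility $F_iA \cdot M_j \subset M_{j-i}$ holds whenever $i \leq j$.

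First I would confirm that $B$ is a subring of $A$: we have $1 \cdot M_i = M_i$, so $1 \in B$, and for $x,y \in B$ the inclusions $xy M_i \subset x M_i \subset M_i$ show that $B$ is multiplicatively closed. Since $K$ lies in the centre of $A$, the sets $F_nA = F_nK\,B$ form an ascending chain (because $\{F_nK\}$ does) and satisfy
$$F_nA \cdot F_mA \;=\; F_nK\,B \cdot F_mK\,B \;=\; F_nK\,F_mK\cdot B\cdot B \;\subset\; F_{n+m}K\,B \;=\; F_{n+m}A,$$
so $FA$ is indeed a separated, exhaustive filtration on $A$ extending $FK$. The left $B$-stability of each $M_i$ is then the content of the preceding lemma, which produces the equality $B = O_l^{gl}(M) = \bigcap_{i \geq 0} O_l(M_i)$; in particular $B M_i \subset M_i$ for every $i \geq 0$.

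The glider compatibility is the only step that uses anything non-formal. For any $n \in \mathbb{Z}$ and any $j \geq n$ with $j \geq 0$, I would factor
$$F_nA \cdot M_j \;=\; (F_nK\,B)\cdot M_j \;=\; F_nK\cdot(B M_j) \;\subset\; F_nK\cdot M_j,$$
pulling $F_nK$ past $B$ using the centrality of $K$ in $A$ and using the $B$-stability established above. Now $M$ is by hypothesis a left $FK$-glider inside $A$, so the glider axiom for $FK$ gives $F_nK\cdot M_j \subset M_{j-n}$ whenever $n \leq j$, which is exactly the required inclusion $F_nA\cdot M_j \subset M_{j-n}$.

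I do not anticipate a serious obstacle here: the definitions of $B$ and of $FA$ have been arranged precisely so that the glider axioms transfer from the $FK$-glider structure on $M$ to an $FA$-glider structure. The only mildly delicate point is keeping track of which of the three conditions (subring, left $B$-module, glider compatibility) is being invoked at each stage and making sure that the passage of $F_nK$ through $B$ is justified by the centrality of $K$, which is why $A$ being a central simple $K$-algebra is essential.
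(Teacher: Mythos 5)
Your argument is correct and follows essentially the same route as the paper: invoke the preceding lemma to get left $B$-stability of each $M_i$, then deduce $F_nA\cdot M_j \subset M_{j-n}$ from the $FK$-glider axiom on $M$ together with $F_nA = F_nK\,B$. The paper groups the product as $F_iA M_j = B(F_iK M_j) \subset B M_{j-i} \subset M_{j-i}$ (using centrality to commute $F_iK$ and $B$), whereas you group as $F_nK(BM_j) \subset F_nK M_j \subset M_{j-n}$, which actually needs no commuting at that point; this is a cosmetic difference. Your additional verification that $FA$ is indeed a ring filtration (using centrality of $K$) is a reasonable bit of extra care that the paper leaves implicit.
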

\begin{proof}
The previous lemma shows that all $M_i$ are left $B$-modules. For $i \leq j$ we have
$$F_iAM_j = BF_iKM_j \subset BM_{j-i} \subset M_{j-i}.$$
\end{proof}

Analogously, one introduces $O_r^{gl}(M)$, $O_r(M_i)$ and one obtains a subring $B^r$ yielding another filtration $F^rA$ on $A$ such that $M$ becomes a right $F^rA$-glider ideal as well.\\

Suppose now that we have two finitely generated $R$-torsion free $FK$-gliders in $A$, say $M$ and $N$. For $i \geq 0$, define
$$(M\cdot N)_i = \{ \sum_j^{'}m_jn_j ~|~ \gr(m_j) + \gr(n_j) \geq i\}.$$
It is clear that $(M\cdot N)_i$ is an $R$-module. 
\begin{lemma}\lemlabel{product}
We have the equality
$$(M\cdot N)_i = \sum_{k=0}^i M_kN_{i-k}.$$
\end{lemma}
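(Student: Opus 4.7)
The plan is to prove the two inclusions directly from the definitions, relying only on the descending nature of the chains $M_k$, $N_k$ and on the fact that for any nonzero $m \in M$, $\gr(m)$ is the largest integer $a$ with $m \in M_a$, so in particular $m \in M_a$ implies $\gr(m) \geq a$.

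For the inclusion $\sum_{k=0}^{i} M_k N_{i-k} \subseteq (M \cdot N)_i$, I would take a typical generator $mn$ with $m \in M_k$, $n \in N_{i-k}$ for some $0 \leq k \leq i$. If $m$ or $n$ is zero the product lies trivially in $(M\cdot N)_i$; otherwise $\gr(m) \geq k$ and $\gr(n) \geq i-k$, so $\gr(m) + \gr(n) \geq i$, showing $mn \in (M\cdot N)_i$ by definition.

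For the reverse inclusion $(M \cdot N)_i \subseteq \sum_{k=0}^{i} M_k N_{i-k}$, it suffices to handle a single product $mn$ with $\gr(m) + \gr(n) \geq i$. Set $a = \gr(m)$ and $b = \gr(n)$, so $m \in M_a$, $n \in N_b$, and $a + b \geq i$. I would split into two cases: if $a \leq i$, take $k = a$; since $b \geq i - a \geq 0$, the descending chain gives $n \in N_b \subseteq N_{i-k}$, and hence $mn \in M_k N_{i-k}$. If instead $a > i$, take $k = i$; then $m \in M_a \subseteq M_i$ and $n \in N_b \subseteq N_0$, so $mn \in M_i N_0$. In either case $mn \in \sum_{k=0}^{i} M_k N_{i-k}$, and extending additively finishes the argument.

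The main obstacle, such as there is one, is purely bookkeeping: one has to be slightly careful that the definition of $(M \cdot N)_i$ uses the grade of individual summands rather than the grade of the product, and that $\gr$ is not additive in general — the inequality $\gr(mn) \geq \gr(m) + \gr(n)$ need not be sharp — but this is precisely why the definition is phrased in terms of $\gr(m_j) + \gr(n_j) \geq i$, and the proof above only needs the membership statements $m \in M_{\gr(m)}$ and $n \in N_{\gr(n)}$ together with the descending chain condition, both of which are immediate.
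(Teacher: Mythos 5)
Your proof is correct and follows essentially the same route as the paper's: the inclusion $\sum_k M_kN_{i-k}\subseteq (M\cdot N)_i$ is the trivial direction, and the reverse inclusion is handled by the same case split on whether $\gr(m)\geq i$ or $\gr(m)<i$ (placing the product in $M_iN_0$ or in $M_{\gr(m)}N_{i-\gr(m)}$, respectively). Your write-up is just a more explicit version of the paper's argument.
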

\begin{proof}
Let $x \in (M \cdot N)_i$ and suppose that $m_in_i$ appears in the expression of $x$ as a finite sum. If $\gr(m_i) \geq i$, then $m_in_i \in M_kN$. If $0 \leq \gr(m_i) < i$ then from $\gr(n_i) \geq i - \gr(m_i)$ it follows that $n_i \in N_{i - \gr(m_i)}$. The other inclusion is trivial.
\end{proof}
Suppose that $M$ and $N$ are generated by $\{(m_i,d_i),~1 \leq i \leq t\}$ and $\{ (n_j,e_j),~ 1 \leq j \leq s\}$ respectively. The previously lemma then entails that $M\cdot N$ is finitely generated by $\{(m_in_j, d_i+e_j), 1 \leq i \leq t, 1 \leq j \leq s\}$. Indeed,
\begin{eqnarray*}
(M \cdot N)_l &=& \sum_{k = 0}^l M_kN_{l-k} \\
&=& \sum_{k=0}^l \sum_{i=1}^t \sum_{j = 1}^s F_{d_i-k}KF_{e_j - l +k}K m_in_j\\
&\subset&  \sum_{i=1}^t \sum_{j = 1}^s F_{d_i+e_j -l}Km_in_j \subset (M\cdot N)_l.
\end{eqnarray*}

Our next goal is to define the inverse glider 
$$M^{-1} \supset (M^{-1})_1 \supset (M^{-1})_2 \supset \ldots$$
To this extent, we put
$$M^{-1} = \{ x \in A ~\vline~ MxM \subset M\},$$
as in the classical situation of normal ideals. For $i > 0$ we then define
$$(M^{-1})_i = \{ x \in A \vline~ MxM \subset M_i\}.$$
\begin{proposition}
$M^{-1}$ together with the chain defined by the $(M^{-1})_i$ is a finitely generated $R$-torsion free $FK$-glider.
\end{proposition}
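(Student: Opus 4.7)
The plan is to verify three items separately: (a) the chain $(M^{-1})_i$ defines an $FK$-glider structure on $M^{-1}$, (b) $M^{-1}$ is $R$-torsion free, and (c) $M^{-1}$ is finitely generated as an $R$-module. Items (a) and (b) are essentially formal; (c) is where the real content lies.

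For the glider axioms, the two ingredients are that $K$ is central in $A$ and that $M$ is itself an $FK$-glider. Given $r \in F_iK$ and $x \in (M^{-1})_j$ with $i \leq j$, centrality lets me commute $r$ past $M$ and compute
$$M(rx)M \;=\; r(MxM) \;\subset\; rM_j \;\subset\; M_{j-i},$$
where the last inclusion is the glider condition $F_iK \cdot M_j \subset M_{j-i}$ applied to $M$; hence $rx \in (M^{-1})_{j-i}$. Specializing to $i = 0$ shows each $(M^{-1})_i$ is a left $R$-module, and $(M^{-1})_{i+1} \subset (M^{-1})_i$ is immediate from $M_{i+1} \subset M_i$. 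The $R$-torsion-freeness is automatic because $M^{-1} \subset A$ sits inside a $K$-vector space.

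For finite generation I would fix $R$-module generators $\mu_1, \ldots, \mu_s$ of $M$ and consider the $K$-linear map $\phi : A \to A^{s \times s}$, $x \mapsto (\mu_i x \mu_j)_{i,j}$. By the very definition of $M^{-1}$, the restriction $\phi|_{M^{-1}}$ takes values in the finitely generated $R$-module $M^{s \times s}$, which is Noetherian since $R$ is. To show $\phi$ is injective, write $1 \in A = KM$ as $1 = \sum_i k_i \mu_i$ with $k_i \in K$; if $\mu_i x \mu_j = 0$ for all $i, j$, then centrality of $K$ gives
$$x \;=\; 1 \cdot x \cdot 1 \;=\; \sum_{i,j} k_i k_j \, \mu_i x \mu_j \;=\; 0.$$
Hence $M^{-1} \cong \phi(M^{-1}) \subset M^{s \times s}$ is an $R$-submodule of a Noetherian module, and so is finitely generated.

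The main obstacle is this finite-generation step, and its crux is the injectivity of $\phi$, which depends in an essential way on the hypothesis $KM = A$ that allows the unit $1_A$ to be expressed through the generators $\mu_i$. Without that input, the bound $\phi(M^{-1}) \subset M^{s \times s}$ would fail to control $M^{-1}$, and the Noetherian argument would collapse.
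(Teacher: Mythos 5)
Your verification of the glider axiom and of torsion-freeness matches the paper's, and your finite-generation argument takes a genuinely different and arguably cleaner route: the paper factors through two auxiliary steps (an embedding $MM^{-1}\hookrightarrow\Hom_R(M,M)$ to conclude $MM^{-1}$ is finitely generated, then an embedding $M^{-1}\hookrightarrow\Hom_R(M,MM^{-1})$), whereas your single injective $R$-linear map $\phi\colon x\mapsto(\mu_i x\mu_j)_{i,j}$ into the Noetherian module $M^{s\times s}$ reaches the same conclusion in one stroke; the injectivity mechanism ($1\in KM$ plus centrality of $K$) is the same one the paper exploits.

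There is, however, a gap relative to what ``finitely generated $FK$-glider'' means in \secref{Brandt}. There the paper declares a glider $M$ to be generated by pairs $\{(m_i,d_i)\}$ precisely when $M_n=\sum_{i} F_{d_i-n}Km_i$ for all $n\geq 0$, i.e.\ finite generation is a statement about the entire chain, not merely about the degree-zero part as an $R$-module. Accordingly the paper's proof does not stop where yours does: it chooses $R$-module generators $x_1,\ldots,x_t$ of $M^{-1}$, sets $j_i$ maximal with $Mx_iM\subset M_{j_i}$, and then proves $(M^{-1})_n=\sum_i F_{j_i-n}Kx_i$ for every $n$. Your argument establishes finite generation of $M^{-1}$ as an $R$-module but never controls the subordinate pieces $(M^{-1})_n$, so this part of the claim remains unproved. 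A second, smaller omission: the paper also shows $KM^{-1}=A$ along the way (by finding, for each $a\in A$, some $r\in R$ with $rMaM\subset M$), which is needed later for $M^{-1}$ to qualify as a normal glider ideal; your map $\phi$ does not yield this.
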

\begin{proof}
Let $i \leq j, x \in (M^{-1})_j$, then $MF_iKxM \subset F_iKM_j \subset M_{j-i}$ shows that
$F_iK(M^{-1})_j \subset (M^{-1})_{j-i}$ and $M^{-1}$ is indeed an $FK$-glider. $R$-torsion freeness is obvious. Next, we show that $M^{-1}$ is a finitely generated $R$-module. To this extent, let $a \in A$. The $R$-module $MaM$ is finitely generated and since $KM = A$ it follows that there exists $r \in R$ such that $MraM = rMaM \subset M$. Hence $ra \in M^{-1}$ and it follows that $KM^{-1} = A$. We have an embedding as $R$-modules
$$MM^{-1} \hookmapright{} \Hom_R(M,M),~ w \mapsto (m \mapsto wm).$$
Indeed, suppose that $w, w' \in MM^{-1}$ define the same morphism, then $wM = w'M$ element-wise and since $MK = A$, also $wA = w'A$ element-wise. In particular, $w = w'$. By choosing generators $u_1, \ldots, u_n$ for $M$ as $R$-module, we obtain a surjective morphism $M_n(R) \twoheadrightarrow \Hom_R(M,M)$. Since $M_n(R)$ is a finitely generated $R$-module, so is $\Hom_R(M,M)$. Because $R$ is Noetherian, it follows that $MM^{-1}$ is also finitely generated. The morphism
$$M^{-1} \to \Hom_R(M,MM^{-1}),~ w \mapsto (m \mapsto mw)$$
is again injective. The $R$-module $\Hom_R(M,MM^{-1})$ is the image of $M_{d,n}(R)$ by choosing generators $y_1,\ldots, y_d$ in $MM^{-1}$ and we arrive at $M^{-1}$ being finitely generated as $R$-module. Suppose now that
$M^{-1} = \sum_{i=1}^t Rx_i$, then we define $j_i \geq 0$ to be maximal such that $Mx_iM \subset M_{j_i}$. It follows that for all $n \geq 0$, $\sum_{i = 1}^t F_{j_i -n}Kx_i \subset (M^{-1})_n$. By enlarging one of the indices $F_{j_i - n + d}$ the sum on the left sits inside $(M^{-1})_{n-d}$ which shows that the above inclusion is in fact an equality. This shows that $M^{-1}$ as an $FK$-glider is generated by $\{(x_i,j_i),~ 1 \leq i \leq t\}$.
\end{proof}

By definition, we have
$$((M^{-1})^{-1})_i = \{ x \in A ~\vline~ M^{-1}xM^{-1} \subset (M^{-1})_i\}.$$

\begin{proposition}\proplabel{inverse}
If $B$ is a maximal order, then $M = (M^{-1})^{-1}$ as $FK$-gliders.
\end{proposition}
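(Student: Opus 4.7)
The plan is to establish the level-wise equality $M_i = ((M^{-1})^{-1})_i$ for every $i \geq 0$, reducing to the classical reflexivity theorem for normal ideals in maximal orders and tracking the chain using the bimodule structure of each $M_i$.

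First, I would argue that each $M_i$ is a full $R$-lattice in $A$. The glider axiom gives $F_{-i}K\cdot M \subset M_i$, and since $FK$ has non-trivial negative part, $F_{-i}K \neq 0$ for $i\geq 0$; hence $K M_i \supset K\cdot F_{-i}K\cdot M = K M = A$. As $M_i\subset M$ is a submodule of a finitely generated, $R$-torsion free module over the Noetherian ring $R$, it is itself finitely generated and torsion free. Thus $O_l(M_i)$ is an $R$-order for each $i$. The chain $B = O_l^{gl}(M) = \bigcap_j O_l(M_j) \subset O_l(M_i)$ combined with the maximality of $B$ forces $B = O_l(M_i)$ for all $i$. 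Using the symmetric right-sided construction (the filtration $F^rA$ with $F_0^rA = B^r = O_r^{gl}(M)$ introduced immediately after the previous proposition) together with the classical fact that the right order of a normal ideal over a maximal left order is again maximal, I would dually deduce $B^r = O_r(M_i)$ for every $i$. In particular, every $M_i$ is a $(B,B^r)$-sub-bimodule of $A$, and $B = O_l(M)$, $B^r = O_r(M)$.

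Second, I would invoke the classical identities from the theory of normal ideals in maximal orders (see \cite{Rein}): the maximality of $B$ yields
$$MM^{-1} = B, \qquad M^{-1}M = B^r.$$
With these in hand, the computation unwinds as
$$((M^{-1})^{-1})_i \;=\; \{x\in A \mid M^{-1}xM^{-1} \subset (M^{-1})_i\} \;=\; \{x\in A \mid (MM^{-1})\,x\,(M^{-1}M) \subset M_i\} \;=\; \{x\in A \mid B\,x\,B^r \subset M_i\}.$$
The inclusion $((M^{-1})^{-1})_i \subset M_i$ is immediate by plugging $1\in B$, $1\in B^r$. For the reverse inclusion, given $x \in M_i$ the identities $Bx\subset M_i$ (because $B = O_l(M_i)$) and $M_iB^r \subset M_i$ (because $B^r = O_r(M_i)$) combine to give $BxB^r \subset M_i$. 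This shows $M_i = ((M^{-1})^{-1})_i$ for each $i\geq 0$, proving the proposition.

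The main obstacle I expect is the identification $B^r = O_r(M_i)$ for every $i$, or equivalently matching the glider right order $B^r = O_r^{gl}(M)$ with the classical right order $O_r(M_0)$, since the maximality hypothesis is phrased only for $B$ on the left. I would treat this by combining the symmetric version of the preceding proposition with the classical propagation of maximality from left to right for normal ideals; once this is in place, the remainder of the argument is purely formal manipulation with $MM^{-1}$ and $M^{-1}M$ and the bimodule structure of the chain.
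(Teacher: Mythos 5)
Your proposal is correct and takes essentially the same route as the paper: both reduce to $((M^{-1})^{-1})_i = \{x \in A \mid BxB^r \subset M_i\}$ via the classical identities $MM^{-1}=B$, $M^{-1}M=B^r$ for the maximal order $B$, and then obtain one inclusion from $1\in B\cap B^r$ and the other from the $B$-$B^r$-bimodule structure of $M_i$. The ``main obstacle'' you flag at the end is not actually one, and your first paragraph is unnecessary (though correct): the inclusions $BM_i\subset M_i$ and $M_iB^r\subset M_i$ are immediate from the definitions $B=O_l^{gl}(M)=\bigcap_i O_l(M_i)$ and $B^r=O_r^{gl}(M)=\bigcap_i O_r(M_i)$, so no identification $B=O_l(M_i)$ or $B^r=O_r(M_i)$ for each individual $i$ is required.
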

\begin{proof}
Let $x$ be such that $M^{-1}xM^{-1} \subset (M^{-1})_i$. This means that 
$$MM^{-1}xM^{-1}M \subset M_i.$$
Since $B$ is a maximal order and $M$ is a normal ideal in $A$, the classical theory yields that $MM^{-1} = B$ and $M^{-1}M = B^r$. In particular, $1 \in B \cap B^r$, which shows that $x \in M_i$. Conversely, let $x \in M_i$. We have to show that $MM^{-1}xM^{-1}M \subset M_i$. This follows since $M_i$ is a $B-B^r$-bimodule.
\end{proof}

From now on, we assume $B$ is a maximal order.

\begin{corollary}
$M\cdot M^{-1}$ is a left $FA$-glider ideal.
\end{corollary}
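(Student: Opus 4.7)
The plan is to verify the two defining properties of a left $FA$-glider ideal for the chain $(M\cdot M^{-1})_\ell = \sum_{k=0}^\ell M_k(M^{-1})_{\ell-k}$ provided by \lemref{product}: namely, that each $(M\cdot M^{-1})_\ell$ is a left $F_0A = B$-module, and that $F_iA(M\cdot M^{-1})_\ell \subset (M\cdot M^{-1})_{\ell-i}$ for all $0 \leq i \leq \ell$. The first is immediate from $B = O^{gl}_l(M)$: since $BM_k \subset M_k$ for every $k$, the inclusion $B\cdot M_k(M^{-1})_{\ell-k} \subset M_k(M^{-1})_{\ell-k}$ holds for every summand, and summing over $k$ gives $B(M\cdot M^{-1})_\ell \subset (M\cdot M^{-1})_\ell$.

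For the glider condition, since $F_iA = F_iK \cdot B$ and $B$ is already absorbed by the previous step, it suffices to show $F_iK(M\cdot M^{-1})_\ell \subset (M\cdot M^{-1})_{\ell-i}$. Using the centrality of $K$ in $A$ and expanding the sum, the task reduces to proving for each $k \in \{0,\dots,\ell\}$ the inclusion
\[ F_iK\, M_k\, (M^{-1})_{\ell-k} \subset (M\cdot M^{-1})_{\ell-i}. \]
If $i \leq k$, this is immediate from the glider property $F_iK M_k \subset M_{k-i}$, since the result then sits in the summand $M_{k-i}(M^{-1})_{\ell-k}$ of $(M\cdot M^{-1})_{\ell-i}$. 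If $i \leq \ell-k$, I would instead commute $F_iK$ past $M_k$ by centrality and apply the glider property of $M^{-1}$ to land in $M_k(M^{-1})_{\ell-k-i}$, also a summand of $(M\cdot M^{-1})_{\ell-i}$.

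The delicate case is the range $\ell-i < k < i$, non-empty only when $i > \ell/2$, in which neither direct argument is available. My plan is to combine the identity $MM^{-1} = B$ from \propref{inverse} with a multiplicative splitting $F_iK = F_kK \cdot F_{i-k}K$ afforded by the valuation-type structure of $FK$ on its positive part: writing $\lambda \in F_iK$ as $\sum_j \lambda_{1,j}\lambda_{2,j}$ with $\lambda_{1,j} \in F_kK$ and $\lambda_{2,j} \in F_{i-k}K$, one has $\lambda m x = \sum_j (\lambda_{1,j}m)(\lambda_{2,j}x)$ by centrality, with $\lambda_{1,j}m \in F_kK\,M_k \subset M$ and (using $i-k \leq \ell-k$) $\lambda_{2,j}x \in F_{i-k}K\,(M^{-1})_{\ell-k} \subset (M^{-1})_{\ell-i}$. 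This places $\lambda m x$ in $M\cdot(M^{-1})_{\ell-i} \subset (M\cdot M^{-1})_{\ell-i}$.

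The main obstacle I foresee is justifying this multiplicative splitting when $FK$ is not strong. As a fallback I would instead establish directly the identification $(M\cdot M^{-1})_\ell = F_{-\ell}A$: the inclusion $F_{-\ell}A = F_{-\ell}K\cdot MM^{-1} \subset M_\ell\cdot M^{-1} \subset (M\cdot M^{-1})_\ell$ follows from the glider property $F_{-\ell}K M \subset M_\ell$, while the reverse inclusion follows by expressing each $M_k(M^{-1})_{\ell-k}$ through the finite generators $(m_\alpha,d_\alpha)$ of $M$ and $(x_\beta,e_\beta)$ of $M^{-1}$ and using $m_\alpha x_\beta \in MM^{-1} = B$ together with filtration-degree bookkeeping. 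Once this identification is in hand, the glider property reduces to the tautology $F_iA\cdot F_{-\ell}A \subset F_{i-\ell}A = (F^-B)_{\ell-i}$, which finishes the corollary.
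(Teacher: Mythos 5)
Your verification that $B(M\cdot M^{-1})_\ell \subset (M\cdot M^{-1})_\ell$ via $B=O^{gl}_l(M)$ acting on each summand $M_k(M^{-1})_{\ell-k}$ is precisely the paper's observation that each $M_k$ is a left $B$-module, and the reduction via $F_iA = F_iKB$ is correct. The gap is in the $FK$-glider step. Working directly with the decomposition $\sum_{k} M_k(M^{-1})_{\ell-k}$ you correctly isolate the problematic range $\ell-i < k < i$; but the multiplicative splitting $F_iK = F_kK\cdot F_{i-k}K$ you invoke there requires the positive part of $FK$ to be multiplicatively closed, which holds for strong filtrations but is \emph{not} among the hypotheses of \secref{Brandt}, where $FK$ is merely separated and exhaustive with $F_0K$ Dedekind. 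So the main route has a hole exactly where you suspect it does.

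The fallback does not repair it, because $(M\cdot M^{-1})_\ell = F_{-\ell}A$ is a strictly stronger claim than the corollary and is false in general. The paper only proves it in the closing Example of \secref{Brandt} under the \emph{extra} hypothesis that $FK$ is strong. For non-strong $FK$ it can already fail at $\ell=1$: take $A=K$ a field, $F_0K=R$ a DVR with $\mathfrak m=(x)$, $F_{-n}K=\mathfrak m^{n+1}$, $F_nK = \mathfrak m^{-n}$ for $n\ge 1$ (the paper's own non-strong example), and $M$ the glider generated by $\{(x^{-1},0),(x,2)\}$. One computes $M_0=\mathfrak m^{-1}$, $M_1=R$, $M^{-1}=\mathfrak m$, $(M^{-1})_1=\mathfrak m^2$, hence $(M\cdot M^{-1})_1 = M_0(M^{-1})_1 + M_1 M^{-1} = \mathfrak m \supsetneq \mathfrak m^2 = F_{-1}A$. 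What the paper actually does is neither of your two routes: it appeals to the computation following \lemref{product}, which expresses $(M\cdot M^{-1})_\ell = \sum_{\alpha,\beta} F_{d_\alpha+e_\beta-\ell}K\,m_\alpha x_\beta$ through the finite generating sets $(m_\alpha,d_\alpha)$ of $M$ and $(x_\beta,e_\beta)$ of $M^{-1}$. In that form the $FK$-glider inclusion $F_iK(M\cdot M^{-1})_\ell \subset (M\cdot M^{-1})_{\ell-i}$ is one line, from $F_iK\,F_{d_\alpha+e_\beta-\ell}K \subset F_{d_\alpha+e_\beta-\ell+i}K$, with no splitting of $F_iK$ and no case analysis. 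That finite-generation reformulation is the ingredient your argument is missing.
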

\begin{proof}
From the deductions made after \lemref{product}, we know that $M \cdot M^{-1}$ is a finitely generated $FK$-glider. Since $F_nA = F_nKB$ and all $M_i$ are left $B$-modules, $M\cdot M^{-1}$ is indeed a left $FA$-glider. 
\end{proof}

\begin{remark}\remlabel{right} 
The same results holds on the right, that is with regards to $B^r$ and $M^{-1}\cdot M$.
\end{remark}

Let us recall the definition of a groupoid $G$ from \cite{Rein} as being a collection of elements, certain of whose products are defined and lie in $G$, such that
\begin{enumerate}
\item For each $a_{ij} \in G$ there exist unique elements $e_i, e_j \in G$ such that $e_ia_{ij} = a_{ij} = a_{ij}e_j$, where all indicated products are defined. Further, $e_ie_i = e_i, e_je_j = e_j$. We call $e_i$ the left unit of $a_{ij}$ and $e_j$ the right unit of $a_{ij}$;
\item $a_{ij}b_{kl}$ is defined if and only if $j = k$, that is, if and only if the right unit of $a_{ij}$ equals the left unit of $b_{kl}$;
\item If $ab$ and $bc$ are defined, so are $(ab)c$ and $a(bc)$ and these are equal;
\item For each $a_{ij} \in G$, there exists an $a^{-1}_{ij} \in G$ with left unit $e_j$, right unit $e_i$, such that 
$$a_{ij}\cdot a_{ij}^{-1} = e_i, \quad a^{-1}_{ij}\cdot a_{ij} = e_j.$$
\item Given any pair of units $e,e' \in G$, there is an element $a_{ij} \in G$ with left unit $e$, right unit $e'$.
\end{enumerate}

We call a finitely generated $R$-torsion free $FK$-glider $M$ inside $A$ such that $KM = A$ and such that both $B^l = B^l(M)$ and $B^r = B^r(M)$ are $R$-orders a normal $FK$-glider ideal in $A$. Let $M,N$ be two normal $FK$-glider ideals in $A$.  We have to be careful, however, with the notion of left and right units, as there can exist multiple elements $e$ satisfying $e \cdot M = M$. Indeed,

\begin{lemma}\lemlabel{helpresults}
Let $FA$ be a filtration extending $FK$ with $F_0A = B$ and negative part $F^{-}B$. Let $M \supset M_1 \supset \ldots$ be a left $FK$-glider.
\begin{enumerate}
\item If $M$ is a left $FA$-glider, then $F^{-}B \cdot M = M$,
\item  If $F^{-}B \cdot M = M$ then $M$ is a left filtered $F^{-}B$-module for the chain $F_{-n}M = M_n$,
\item $M$ is an idempotent, i.e. $M \cdot M = M$ if and only if  
$$M \supset M_1 \supset M_2 \supset \ldots$$
defines a negative algebra filtration on the ring $M \subset A$. 
\end{enumerate}
\end{lemma}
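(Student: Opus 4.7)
Each of the three parts unwinds the glider-product formula $(X\cdot Y)_i = \sum_{k=0}^i X_k Y_{i-k}$ from \lemref{product} and matches it against the defining axioms of an $FA$-glider. For (1), the plan is to compute $(F^{-}B\cdot M)_k$ level by level. Since $(F^{-}B)_i = F_{-i}A$, the $i=0$ summand is $B \cdot M_k = M_k$ (using that each $M_k$ is a left $B$-module, which is built into the definition of an $FA$-glider), giving $M_k \subset (F^{-}B\cdot M)_k$. Every other summand $F_{-i}A \cdot M_{k-i}$ lies in $M_k$ by the glider axiom applied with the index pair $-i \leq k-i$, so equality follows.

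For (2), the same formula is read backwards. The equality $F^{-}B \cdot M = M$ at level $i+j$ reads $M_{i+j} = \sum_{l=0}^{i+j} F_{-l}A \cdot M_{i+j-l}$, and the single summand with $l=i$ already gives $F_{-i}A \cdot M_j \subset M_{i+j}$. This is precisely the compatibility making $M_\bullet$ a filtered left $F^{-}B$-module with $F_{-n}M := M_n$; the left $B$-action on each $M_n$ is the case $i=0$ of the same inclusion.

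For (3), I would split the biconditional. Forward: $(M\cdot M)_k = M_k$ at $k=0$ forces $M \cdot M = M$ as subsets of $A$, so $M$ is a subring of $A$; at $k \geq 1$, every individual summand $M_i M_{k-i}$ sits inside the full sum $M_k$, yielding $M_i M_j \subset M_{i+j}$, which is exactly what is needed for the chain to be a negative algebra filtration. Backward: the filtration inequality $M_i M_{k-i} \subset M_k$ gives $(M\cdot M)_k \subset M_k$ immediately, and the opposite inclusion follows from $1 \in M$ via $M_k = 1 \cdot M_k \subset M \cdot M_k \subset (M\cdot M)_k$.

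The entire lemma is essentially bookkeeping with \lemref{product}, so there is no genuine obstacle. The only point that warrants care is whether the ``ring $M$'' in part (3) actually contains the identity of $A$, which is needed for the backward direction. This is harmless in the present context: any other idempotent identity $e_M \in M_k$ would satisfy $e_M = e_M^n \in M_{nk}$ for all $n$, placing it in the body $\bigcap_n M_n$, which is $0$ for a normal $FK$-glider; so $1 \in M$ may be assumed without loss of generality.
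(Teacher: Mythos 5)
Your proof is correct and takes exactly the paper's approach: the paper declares (1) and (3) straightforward and proves only (2), by the same level-by-level reading of the product formula from \lemref{product} that you write out in detail. The only thing worth flagging is the closing paragraph: the backward direction of (3) does not in fact require $1_A \in M$; it only needs the identity $e_M$ of the ring $M$, since $e_M m = m$ for every $m \in M_k \subset M$ already gives $M_k = e_M M_k \subset M_0 M_k \subset (M\cdot M)_k$. Your body-is-zero argument shows at most that $e_M \in M_0 \setminus M_1$, which is weaker than $e_M = 1_A$ and also invokes a normality hypothesis that is not in the lemma statement, but since the conclusion $e_M = 1_A$ is not needed, none of this undermines the proof.
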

\begin{proof}
(1) and (3) are straightforward. We prove (2). Let $d, n \geq 0$. Then
$$F_{-d}BM_n = F_{-d}BM_{n-d+d} \subset (F^{-}B \cdot M)_{n-d} = M_{n-d}.$$
\end{proof}

For two idempotents $F^{-}B$ and $F^{-}C$ we put $F^{-}B \leq F^{-}C$ if and only if $F^{-}B \cdot F^{-}C = F^{-}C$. 

\begin{proposition}\proplabel{unit}
Let $M$ be a normal $FK$-glider ideal in $A$. The set 
$$\{ e {\rm~idempotent~} \vline~ e \cdot M = M\}$$
has a unique maximal element $E^l  = E^l(M)$, which we call the left unit of $M$. The same result holds for right multiplication, leading to a right unit $E^r = E^r(M)$.
\end{proposition}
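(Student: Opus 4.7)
The plan is to construct $E^l(M)$ explicitly as the filtration on the ring $B^l := O_l^{gl}(M)$ whose degree-$k$ piece is the widest subset of $A$ that ``lowers $M$ by $k$ steps'', and then to verify that every competing idempotent sits beneath this candidate degreewise.

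Concretely, I would set
\[
(E^l)_k := \{x \in A \mid xM_j \subset M_{j+k} \text{ for all } j \geq 0\}, \quad k \geq 0,
\]
so $(E^l)_0 = B^l$. The inclusions $(E^l)_{k+1} \subset (E^l)_k$ and $(E^l)_i \cdot (E^l)_j \subset (E^l)_{i+j}$ are immediate from the definition, and nonvanishing of $(E^l)_k$ follows from $F_{-k}K \cdot B^l \subset (E^l)_k$, using that $M$ is already an $FK$-glider. Hence $E^l$ is a negative algebra filtration on the ring $B^l$, so by \lemref{helpresults}(3) it is an idempotent glider. The identity $E^l \cdot M = M$ is then direct from the definition, the reverse inclusion supplied by $1 \in B^l$.

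For maximality, let $e = F^-C$ be any idempotent with $e \cdot M = M$. By \lemref{helpresults}(2), $M$ is a filtered $F^-C$-module with $F_{-n}M = M_n$, so $e_k \cdot M_j \subset M_{j+k}$ for all $j, k \geq 0$; this is precisely the statement $e_k \subset (E^l)_k$. Computing $(e \cdot E^l)_n = \sum_{k=0}^n e_k (E^l)_{n-k}$, the containment $\subset (E^l)_n$ follows from the degreewise inclusion combined with the multiplicativity of $E^l$, while the reverse $\supset$ comes from the $k=0$ summand together with $1 \in C$. Hence $e \cdot E^l = E^l$, i.e.\ $e \leq E^l$, which shows $E^l$ is the greatest and a fortiori the unique maximal element. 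Antisymmetry of $\leq$ (needed to promote ``greatest'' to ``unique maximum'') is straightforward: if $F^-R \cdot F^-S = F^-S$ and $F^-S \cdot F^-R = F^-R$, comparison in degree $0$ gives $RS = S$ and $SR = R$, hence $R = S$, and induction upward using $1 \in R$ forces the negative parts to coincide.

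The right-handed statement producing $E^r(M)$ is proved by the mirror construction $(E^r)_k := \{x \in A \mid M_j \cdot x \subset M_{j+k}\}$, invoking \remref{right}. The main obstacle is the maximality step: one must extract the pointwise filtration inequality $e_k M_j \subset M_{j+k}$ from the abstract glider identity $e \cdot M = M$, and this is exactly what \lemref{helpresults}(2) supplies; the remainder of the argument is formal bookkeeping with the product of gliders.
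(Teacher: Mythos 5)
Your proposal is correct and follows essentially the same route as the paper: your candidate $(E^l)_k=\{x\in A\mid xM_j\subset M_{j+k}\ \forall j\}$ is exactly the glider modulizer filtration $F^{-,\ast}B$ the paper takes from \cite{NVo2}, and the maximality step (extracting $e_kM_j\subset M_{j+k}$ from $e\cdot M=M$ via \lemref{helpresults}(2) to get $e_k\subset (E^l)_k$) is the paper's argument as well. The only differences are presentational: you verify the modulizer properties by hand rather than citing \cite{NVo2}, and you supply the antisymmetry check that the paper leaves implicit.
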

\begin{proof}
The glider ideal $M$ yields a subring $B = B^l(M)$ and filtration $FA$ on $A$ such that $M$ is a left $FA$-glider. The theory of glider modulizers, see \cite{NVo2}, shows that there exists a subring $B^\ast \supset B$ with chain $F^{-,\ast}B^\ast$ which yields a negative ring filtration and such that $M$ with filtration $F_{-n}M = M_n$ is a left filtered $F^{-,\ast}B^\ast$-module. We recall that the negative part of the chain is defined by 
$$F_{-d}^\ast B = \{ x \in B ~\vline~ xM_{n-d} \subset M_n {\rm~for~} n \in \mathbb{N} {\rm~such~that~} n-d \geq 0\},$$
for $d \geq 0$. This entails that
$$F^{-,\ast}B^\ast \cdot M = M.$$ 
In fact, by definition of $B$, $B^\ast$ actually equals $B$. Now suppose that $M$ is also a left $fA$-glider for some exhaustive, separated filtration $fA$ on $A$. Let $x \in f_{-d}A$, then for all $n \geq d$ it holds
$$xM_{n-d} \subset M_n,$$
which entails that $x \in F^{-d,\ast}B$. This shows that $f^{-}A\cdot F^{-,\ast}B = F^{-,\ast}B$ and $E = F^{-,\ast}B$.
\end{proof}

The previous proposition leads to calling the multiplication $M\cdot N$ proper if $E^r(M) = E^l(N)$. We will show that the collection of normal $FK$-glider ideals in $A$ with proper multiplication is in fact a groupoid.

\begin{proposition}\proplabel{u}
We have the equalities
$$E^l(M) = F^{-,\ast}B^l = M\cdot M^{-1},$$
and
$$E^r(M) = F^{-,\ast}B^r = M^{-1} \cdot M.$$
\end{proposition}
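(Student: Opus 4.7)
The plan is, using \propref{unit} which already supplies $E^l(M) = F^{-,\ast}B^l$, to reduce the proposition to the chain equality $F^{-,\ast}B^l = M \cdot M^{-1}$ and to establish it by two matching inclusions at each level $i \geq 0$. Since $B^l$ is a maximal $R$-order and $M$ a normal ideal, classical theory gives $MM^{-1} = B^l$ as sets, so the zeroth levels already agree: $(M \cdot M^{-1})_0 = B^l = F^{-,\ast}_0 B^l$, and only levels $i \geq 1$ need attention.

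The forward inclusion $F^{-,\ast}_{-i}B^l \subset (M \cdot M^{-1})_i$ is the straightforward direction. Writing $1 = \sum_k m_k y_k \in MM^{-1}$ with $m_k \in M$ and $y_k \in M^{-1} = (M^{-1})_0$, any $x \in F^{-,\ast}_{-i}B^l$ satisfies $xm_k \in xM \subset M_i$ by the defining property $xM_j \subset M_{j+i}$ from the proof of \propref{unit}. Consequently
\[ x \;=\; \sum_k (xm_k)\,y_k \;\in\; M_i \cdot (M^{-1})_0 \;\subset\; (M\cdot M^{-1})_i \]
by \lemref{product}, yielding the inclusion.

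For the reverse inclusion my plan is to show that $M \cdot M^{-1}$ is itself an idempotent glider satisfying $(M \cdot M^{-1}) \cdot M = M$; once this is in hand, the maximality of $E^l(M)$ established in \propref{unit} forces $M \cdot M^{-1} \leq F^{-,\ast}B^l$, and combined with the forward inclusion this yields the desired chain equality. Both properties reduce, via the associativity of set multiplication packaged in \lemref{product}, to the key bound
\[ M_a (M^{-1})_b M_c \;\subset\; M_{a+b+c} \qquad \text{for all } a,b,c \geq 0. \qquad (\star) \]
To prove $(\star)$ I would use that \lemref{fieldjac} plus the non-triviality of $F^{-}R$ forces $J(R) \neq 0$, so the Dedekind domain $R$ is semilocal and hence a PID. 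Over a PID the maximal order $B^l$ is a principal ideal ring and every normal ideal is invertible with a principal generator, so after localization at each maximal ideal of $R$ the pieces $M_a$, $(M^{-1})_b$, $M_c$ become shifted powers of a uniformizer, and $(\star)$ collapses to the transparent degree addition $F_{-a}K \cdot F_{-b}K \cdot F_{-c}K = F_{-(a+b+c)}K$ applied to the classical identity $MM^{-1}M = M$. The right-sided statement $E^r(M) = F^{-,\ast}B^r = M^{-1}\cdot M$ then follows by applying the same argument to $M^{-1}$, using $(M^{-1})^{-1} = M$ from \propref{inverse}, as anticipated in \remref{right}.

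The main obstacle is precisely the bound $(\star)$: the sandwich definition $(M^{-1})_b = \{x : MxM \subset M_b\}$ only yields $M_a x M_c \subset M_b$ tautologically, and sharpening the right-hand side to $M_{a+b+c}$ genuinely requires going beyond this definition — either via the principal/semilocal structure sketched above, or via a direct calculation with the explicit generator description $M_n = \sum_i F_{d_i - n}K\,m_i$ and its analogue for $M^{-1}$, tracking how the degrees $d_i$ combine when an element of $(M^{-1})_b$ is sandwiched between them.
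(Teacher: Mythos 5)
Your architecture matches the paper's: the forward inclusion $F^{-,\ast}_{-d}B^l \subset (M\cdot M^{-1})_d$ via $x = x\cdot 1 \in xMM^{-1} \subset M_dM^{-1}$ is exactly the paper's first step, and the paper likewise closes the argument by showing $M\cdot M^{-1}$ is an idempotent and appealing to \propref{unit}. The genuine gap is the one you flag yourself: the bound $(\star)$, $M_a(M^{-1})_bM_c \subset M_{a+b+c}$, is never proved, and neither of your suggested routes to it is convincing as stated. The localization route founders on the glider levels: even when $R$ is semilocal (hence a PID) and $M$ is principal as a module, the level $M_a = \sum_i F_{d_i-a}K\,m_i$ for a non-strong filtration $FK$ is not a single shifted copy $F_{-a}K\,Bu$, so $(\star)$ does not collapse to degree addition in $K$; and the direct generator calculation merely reduces $(\star)$ to showing $m_ixm_j \in M_{d_i+b+d_j}$ for $x \in (M^{-1})_b$, which is $(\star)$ again at the generators. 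As written, the proposal is therefore incomplete at its central step.

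Note that the paper's own proof of idempotency avoids $(\star)$ altogether: it gets $M\cdot M^{-1} \subset (M\cdot M^{-1})\cdot(M\cdot M^{-1})$ for free from the already-proved containment $F^{-,\ast}B^l \subset M\cdot M^{-1}$ together with \lemref{helpresults}, and for the reverse containment it uses only the sandwich identity $(M^{-1})_{k-s}M_t \subset M^{-1}M = B^r$ and the fact that each $(M^{-1})_j$ is a left $B^r$-module, rather than a degree-additive product bound. If you wish to salvage your route you must either prove $(\star)$ honestly (it is strictly stronger than what the paper invokes) or adopt the paper's weaker bookkeeping. Finally, be careful with the phrase ``maximality forces $M\cdot M^{-1} \leq F^{-,\ast}B^l$'': the partial order on idempotents ($e \leq e'$ iff $e\cdot e' = e'$) does not by itself yield the level-wise containment $(M\cdot M^{-1})_d \subset F^{-,\ast}_{-d}B^l$ that you need to combine with the forward inclusion; what yields it is the explicit computation inside the proof of \propref{unit}, which shows that the negative part of any filtration making $M$ a glider is level-wise contained in $F^{-,\ast}B^l$, applied via \lemref{helpresults} to the idempotent $M\cdot M^{-1}$ once $(M\cdot M^{-1})\cdot M = M$ is known.
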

\begin{proof}
Let $x \in F_{-d}^*A$, i.e. $xM_{n-d} \subset M_n$ for all $n \geq d$. Then
$$x \in xB = xMM^{-1} = xM_{d-d}M^{-1} \subset M_dM^{-1} \subset (M\cdot M^{-1})_d.$$
This shows that $F^{-,\ast}B \subset M \cdot M^{-1}$. \lemref{helpresults} then entails that 
$$M \cdot M^{-1} \subset (M \cdot M^{-1})\cdot (M \cdot M^{-1}).$$ 
Let $i \geq 0$, then we have
\begin{eqnarray*}
((M \cdot M^{-1}) \cdot (M \cdot M^{-1}))_i &=& \sum_{k=0}^i\sum_{s = 0}^k \sum_{t = 0}^{i-k} M_s(M^{-1})_{k-s}M_t(M^{-1})_{i-k-t}\\
&\subset&  \sum_{k=0}^i\sum_{s = 0}^k \sum_{t = 0}^{i-k} M_sB^r(M^{-1})_{i-k-t} \\
&\subset&  \sum_{k=0}^i\sum_{s = 0}^k \sum_{t +k= 0}^{i}M_s(M^{-1})_{i-k-t} \\
&\subset& \sum_{s = 0}^k M_s (M^{-1})_{i-s} = (M \cdot M^{-1})_i.
\end{eqnarray*}
 It follows that $M\cdot M^{-1}$ is an idempotent, hence equals $E^l(M)$ by \propref{unit}. The proof for $E^r(M)$ is analogous.
\end{proof}

\begin{proposition}
Let $FK$ be a separated, exhaustive filtration on a field $K$ and let $A$ be a central simple $K$-algebra. The collection of all normal $FK$-glider ideals in $A$ with proper multiplication $M \cdot N$ and units being the idempotent elements forms a groupoid.
\end{proposition}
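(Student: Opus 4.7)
The plan is to verify in turn each of the five defining axioms of a groupoid for the collection $\mathcal{G}$ of normal $FK$-glider ideals in $A$ with proper multiplication. Much of the structure is already in place: for every $M\in\mathcal{G}$, \propref{unit} supplies unique maximal left and right units $E^l(M)$ and $E^r(M)$, \propref{u} identifies them as $M\cdot M^{-1}$ and $M^{-1}\cdot M$ respectively, and \propref{inverse} ensures $(M^{-1})^{-1}=M$.

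Axioms (1)--(3) are essentially formal. For (1), the equalities $E^l(M)\cdot M=M=M\cdot E^r(M)$ follow from $F^{-}B\cdot M=M$ in \lemref{helpresults}(1), applied on both sides, together with the fact that $E^l(M)=F^{-,\ast}B^l$ is the unique maximal such idempotent; the identity $E^l(M)\cdot E^l(M)=E^l(M)$ is built into \propref{u}. Axiom (2) is the very definition of proper multiplication. For associativity (3), \lemref{product} gives $(X\cdot Y)_n=\sum_{i+j=n}X_iY_j$, so both $((M\cdot N)\cdot P)_n$ and $(M\cdot (N\cdot P))_n$ reduce to the common expression $\sum_{i+j+k=n}M_iN_jP_k$ inside $A$, and these agree by the associativity of multiplication in $A$.

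For axiom (4), the inverse of $M$ is $M^{-1}$ itself, with $M\cdot M^{-1}=E^l(M)$ and $M^{-1}\cdot M=E^r(M)$ by \propref{u}. One must further check that $M^{-1}\in\mathcal{G}$ and that $E^l(M^{-1})=E^r(M)$ and $E^r(M^{-1})=E^l(M)$. The first assertion is immediate from the construction of $M^{-1}$: finite generation, $R$-torsion-freeness and $KM^{-1}=A$ have already been established, and the left/right orders of $M^{-1}$ are again maximal (these roles swap with those of $M$, as in the classical case of normal ideals). The identification of units then follows by applying \propref{u} to $M^{-1}$ and invoking $(M^{-1})^{-1}=M$ from \propref{inverse}.

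The main obstacle is axiom (5): given any two idempotent normal glider ideals $E,E'\in\mathcal{G}$, one must produce an $M\in\mathcal{G}$ with $E^l(M)=E$ and $E^r(M)=E'$. The underlying subrings $B:=F_0E$ and $B':=F_0E'$ are maximal $R$-orders in $A$, and the classical Brandt-groupoid theory for normal $R$-ideals in a central simple algebra guarantees a normal $R$-ideal $N\subset A$ with $O_l(N)=B$ and $O_r(N)=B'$, realisable for instance as $N=BuB'$ for a suitable unit $u\in A^{\times}$ produced via Skolem--Noether. The delicate point is to equip $N$ with a compatible glider chain, e.g.\ $N_i:=\sum_{s+t\ge i}E_s\,u\,E'_t$ (where $E_s,E'_t$ denote the terms of the chains of $E$ and $E'$ respectively), and then to verify, using \lemref{helpresults}(3) and the maximality statement in \propref{unit}, that the resulting object is a normal $FK$-glider ideal with $E^l(N)=E$ and $E^r(N)=E'$; this final compatibility check, which requires that the prescribed idempotents are genuinely the maximal ones stabilising the chosen chain on both sides, is what makes axiom (5) the hardest step.
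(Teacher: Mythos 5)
Your verification of axioms (1)--(4) lines up with the paper's: (1) and (4) fall out of \propref{u} and \propref{inverse}, (2) is definitional, and for (3) both you and the paper reduce $((M\cdot N)\cdot V)_i$ and $(M\cdot (N\cdot V))_i$ to a common triple sum via \lemref{product}. Where you diverge is axiom (5), and your route there is genuinely different and noticeably heavier than the paper's. You invoke the classical Brandt groupoid for normal $R$-ideals, use Skolem--Noether to fabricate a unit $u\in A^\times$ with $N=BuB'$, and then hand-build a chain $N_i=\sum_{s+t\geq i}E_s\,u\,E'_t$ around that $u$ --- an operation not covered by the glider product as defined and needing its own verification. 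The paper's resolution is simply to take the already-defined glider product $E\cdot E' = F^{-}B\cdot F^{-}C$: its level-$0$ term is $BC$, which is automatically a full $R$-lattice with $O_l(BC)\supseteq B$ and $O_r(BC)\supseteq C$, hence equal to $B$ and $C$ by maximality of the orders, no conjugating unit required. This stays entirely inside the glider calculus (\lemref{product}, \propref{u}) and eliminates the classical detour. The one real subtlety --- that the maximal idempotents $E^l$ and $E^r$ of the resulting glider coincide with the prescribed $E$ and $E'$ and not merely contain them --- is exactly the point you flag as "the hardest step", and you leave it unresolved; note that the paper's construction does not obviously dispose of it either, but at least it isolates the difficulty rather than adding the Skolem--Noether machinery on top. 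In short: your argument is not wrong, but for (5) you should replace the $N=BuB'$ construction by the direct product $E\cdot E'$ and spend your effort on the maximality check rather than on producing and controlling $u$.
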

\begin{proof}
Let $M, N$ and $V$ be normal $FK$-glider ideals such that $M\cdot N$ and $N \cdot V$ are defined. For $i \geq 0$ we have by definition
\begin{eqnarray*}
((M\cdot N) \cdot V)_i &=& \sum_{k=0}^i \sum_{j=0}^k M_jN_{k-j}V_{i-k} \\
&=& \sum_{j=0}^i\sum_{k=j}^i M_jN_{k-j}V_{i-k}\\
&=& \sum_{j=0}^i \sum_{l=0}^{i-j}M_jN_lV_{i-j-l}\\
&=& (M \cdot (N \cdot V))_i.
\end{eqnarray*}
Properties (1) and (4) follow from \propref{u}, for property (5), the element $F^{-}B\cdot F^{-}C$ with $F^{-}B,F^{-}C$ being two units, i.e. negative algebra filtrations on subrings $B$ and $C$ of $A$, does the trick.

\end{proof}

\begin{example}
When $FK$ is strong, it follows from $M$ being finitely generated, that $M_i = F_{-i}KM$ for all $i \geq 0$. This shows that $B = O_l(M) = O_l(M_i)$ for all $i \geq i$. One then shows that $M_i (M^{-1})_j \subset F_{-i-j}A$. Since the filtration $FA$ is also strong, one subsequently shows that 
$$(M \cdot M^{-1})_i = F_{-i}A.$$
Thus, when $FK$ is strong and $B$ is a maximal order, then $M \cdot M^{-1}$ is the left $FA$-glider
$$B \supset F_{-1}A \supset F_{-2}A \supset \ldots$$
and $F^-B = F^{-,\ast}B = M\cdot M^{-1}$
and similar for $B^r$.
\end{example}

\section{Higher rank valuations}\seclabel{Higher}

Up to now, we have restricted to ring filtrations filtered over the integers $\mathbb{Z}$. The theory for gliders can more generally be introduced for general totally ordered abelian groups $\Gamma$. One result we will need, is that if $M$ is an irreducibe $FR$-glider where $FR$ is $\Gamma$-filtered, then for any $\gamma \in \Pi$, where $\Pi$ is a positive cone of $\Gamma$, the glider $M_\gamma$ given by $(M_\gamma)_\delta = M_{\gamma + \delta}, \delta \in \Pi$, remains irreducible.\\

In this section we consider $\Gamma$-filtered rings where $\Gamma = \mathbb{Z}^n$ equipped with the lexicographical order, that is
$$(a_1,\ldots,a_n) < (b_1,\ldots,b_n) \Leftrightarrow \left\{ \begin{array}{l} {\rm~the~first~non-zero~element~of}\\
b_1-a_1,\ldots, b_n - a_n {\rm~is~positive.} \end{array}\right.$$

In fact, we will restrict to $n = 2$. Let us recall from \cite{Mat} how one obtains higher rank (commutative) valuation rings. To this extent, let $K$ be a field with valuation, and call $R$ its valuation ring. If $M$ denotes the unique maximal ideal of $R$, then we can equip the residue field $K' = R/M$ again with a valuation. If we denote its valuation ring with 
$$ \ov{R}' \subset K',$$
then the subset of $R$ given by
$$R' := \{ x \in R~\vline~ x + M \in \ov{R}'\},$$
is a valuation ring. The value groups of $R, R'$ and $\ov{R}'$ are related by a short exact sequence
$$0 \to \Gamma_{\ov{R}'} \to \Gamma_{R'} \to \Gamma_{R} \to 0.$$
We call $R'$ the composite of the valuation rings $R$ and $\ov{R}'$. 

\begin{example}
Let $K = k((y))((x))$ be the fraction field of $R = k((y))[[x]]$. The $x$-adic valuation yields a discrete valuation on $K$ with valuation ring $R$ and value group $\mathbb{Z}$. On the residue field $K' = k((y))$ we put the $y$-adic valuation with valuation ring $k[[y]]$. The composite is the valuation ring $R' = k[[y]] + xk((y))[[x]]$ and the short exact sequence
$$0 \to \mathbb{Z} \to \Gamma_{R'} \to \mathbb{Z} \to 0$$
splits, so that $\Gamma_{R'} = \mathbb{Z}^2$. One checks that the total ordering is given by the lexicographical ordering. The negative part of the associated valuation filtration on $R' = F_{(0,0)}K$ is given by
$$F_{(-i,-j)}K = x^i(y^jk[[y]] + xk((y))[[x]]).$$
\end{example}

From now on, all filtrations $FK$ are assumed to be $\mathbb{Z}^2$-filtrations with lexicographical ordering. In order to generalize \theref{GBSfield} for fields $K$ equipped with $\mathbb{Z}^2$-filtrations, we first observe that the additive chain 
$$ f^hK : \quad \ldots \subset \bigcup_{n \in \mathbb{Z}} F_{(-1,n)}K \subset  \bigcup_{n \in \mathbb{Z}} F_{(0,n)}K  \subset  \bigcup_{n \in \mathbb{Z}} F_{(1,n)}K  \subset \ldots$$
defines a separated, exhaustive $\mathbb{Z}$-filtration on $K$. We also have a filtration on $R = \bigcup_{n \in \mathbb{Z}} F_{(0,n)}$ given by
$$f^vR: \quad \ldots F_{(0,-2)}K \subset  F_{(0,-1)}K \subset  F_{(0,0)}K \subset  F_{(0,1)}K \subset  F_{(0,2)}K \subset \ldots$$
Let $M$ be an $FK$-glider. We will depict this by
$$\begin{array}{ccccccc}
\vdots && \vdots && \vdots && \ddots \\
\cap && \cap && \cap && \\
M_{(0,2)} & \supset & M_{(1,2)} & \supset &M_{(2,2)} & \supset & \cdots \\
\cap && \cap && \cap && \\
M_{(0,1)} & \supset & M_{(1,1)} & \supset &M_{(2,1)} & \supset &\cdots \\
\cap && \cap && \cap && \\
M_{(0,0)} & \supset & M_{(1,0)} & \supset &M_{(2,0)} & \supset &\cdots 
\end{array}$$
 For $m \geq 0$, the chain 
$$M_{(m,\ast)}: \quad M_{(m,0)} \supset M_{(m,1)} \supset M_{(m,2)} \supset \ldots$$
is an $f^vR$-glider, whence its body $B(M_{(m,\ast)})$ is an $R$-module.
\begin{lemma}
Let $M \in \GBS_F(K)$, then the chain 
$$B^v(M): \quad B(M_{(0,\ast)}) \supset B(M_{(1,\ast)}) \supset B(M_{(2,\ast)}) \supset \ldots$$
is an irreducible $f^hK$-glider, i.e. $B^v(M) \in \GBS_{f^hK}(K)$.
\end{lemma}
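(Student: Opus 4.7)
The plan has two steps: first verify that $B^v(M)$ is an $f^hK$-glider, then establish irreducibility by lifting any $f^hK$-subglider to a $\mathbb{Z}^2$-subglider of $M$ and invoking the irreducibility of $M$.

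For the glider axiom, given $i \leq j$, $x \in f^h_iK = \bigcup_n F_{(i,n)}K$ and $m \in B^v(M)_j = \bigcap_{k \geq 0} M_{(j,k)}$, I would pick $n$ with $x \in F_{(i,n)}K$. For each $l \geq 0$, set $k := \max(0, n+l) \geq 0$; then $(i,n) \leq (j,k)$ in the lex order on $\mathbb{Z}^2$ (either $i < j$, or $i = j$ with $n \leq k$) and $k - n \geq l$. The $\mathbb{Z}^2$-glider property of $M$ then gives $xm \in M_{(j-i, k-n)} \subset M_{(j-i, l)}$, and intersecting over $l$ yields $xm \in B^v(M)_{j-i}$, as required.

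For irreducibility, let $N = N_0 \supset N_1 \supset \ldots$ be an $f^hK$-subglider of $B^v(M)$. Lift $N$ to a $\mathbb{Z}^2$-indexed chain $N'$ in $M$ by defining $N'_{(j,k)} := N_j$ for all $(j,k) \in \Pi$. The inclusions $N_j \subset B(M_{(j,*)}) \subset M_{(j,k)}$ make this a descending chain in $M$, and the $FK$-glider axiom for $N'$ follows from $F_{(i,n)}K \subset f^h_iK$ together with the $f^hK$-action on $N$. Since $M$ is irreducible, $N'$ must be trivial of type $T_1$, $T_2$, or $T_3$, and I would translate each case back to triviality of $N$ in $B^v(M)$, using the identities $B(N') = \bigcap_j N_j = B(N)$ and $B(B^v(M)) = B(M)$.

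The main obstacle is the type $T_3$ translation. A strict monotone map $\alpha : \Pi \to \Pi$ with $M_{\alpha(j,k)} = N_j$ must have its first coordinate independent of $k$ (since $N_j$ is), so writing $\alpha(j,k) = (\beta(j), \gamma(j,k))$ with $\gamma(j,\cdot)$ strictly monotone forces
$$N_j = \bigcap_k M_{(\beta(j), \gamma(j,k))} = B(M_{(\beta(j), *)}) = B^v(M)_{\beta(j)},$$
producing the candidate map $\beta : \mathbb{N} \to \mathbb{N}$ witnessing type $T_3$ triviality of $N$. The delicate verification is that the lex inequality $\alpha(j,k) - (i,n) \geq \alpha(j-i, k-n)$ projects to the required $\beta(j) - i \geq \beta(j-i)$, and that $\beta$ is strictly monotone; for types $T_1$ and $T_2$ one faces the related subtlety that the index $j$ witnessing triviality of $N'$ might be one where $B^v(M)_j$ accidentally equals $B(M)$ (resp.\ $0$), in which case one must shift to a different witness by exploiting that these body-conditions, once attained, are stable upward along the chain.
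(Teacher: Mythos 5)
Your overall strategy matches the paper's: verify the glider axiom directly, lift a subglider $T$ of $B^v(M)$ to the ``constant-columns'' $\mathbb{Z}^2$-subglider $N'$ of $M$, invoke irreducibility of $M$, and push each trivial type back down, with the $T_3$ case producing a map $\beta:\mathbb{N}\to\mathbb{N}$.

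There is, however, a genuine gap in your $T_3$ translation. The claim that the first coordinate of $\alpha(j,k)$ ``must be independent of $k$ (since $N_j$ is)'' does not follow: the chain $M_{(\cdot)}$ need not be strictly descending, so $M_{\alpha(j,k)}=N_j$ for all $k$ is perfectly compatible with $\alpha_1(j,k)$ increasing. What one can actually prove is that $\alpha_1(j,\cdot)$ is \emph{bounded}, hence eventually constant --- and this is where the prior exclusion of types $T_1$ and $T_2$ is used, since an unbounded first coordinate would place $N_j$ inside $\bigcap_l M_{(l,0)} = B(M) = B(B^v(M))$. Accordingly, the paper defines $\beta(j):=\pi_1\bigl(\sup_m \alpha(j,m)\bigr)$; with that correction your intersection computation $N_j=\bigcap_k M_{(\beta(j),\gamma(j,k))}=B^v(M)_{\beta(j)}$ goes through (for $k$ large enough that the first coordinate has stabilised, the second coordinate is strictly increasing and unbounded). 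Once ``independent of $k$'' is replaced by ``eventually constant in $k$, with unbounded second coordinate,'' your argument recovers the paper's. Your closing observations --- that one must still check $\beta$ is monotone and satisfies $\beta(j)-i\geq\beta(j-i)$, and that the $T_1/T_2$ witnesses may need to be shifted upward when $B^v(M)_j$ coincides with $B(M)$ or $0$ --- are legitimate points of care that the paper dispatches in a single sentence without detail.
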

\begin{proof}
That $B^v(M)$ is an $f^hK$-glider is obvious. Suppose that $T \supset T_1 \supset \ldots$ is a subglider. There is an associated $FK$-subglider of $M$, given by
$$\begin{array}{ccccccc}
\vdots && \vdots && \vdots && \ddots \\
\cap && \cap && \cap && \\
T & \supset & T_1 & \supset &T_2 & \supset & \cdots \\
\cap && \cap && \cap && \\
T & \supset & T_1 & \supset &T_2 & \supset &\cdots \\
\cap && \cap && \cap && \\
T & \supset & T_1 & \supset &T_2 & \supset &\cdots 
\end{array}$$
so it must be trivial. It is clear that triviality of type $T_1$ or $T_2$ easily yield that $T \supset T_1 \supset \ldots$ is trivial of the same type. So suppose that $T \supset T_1 \supset \ldots$ is not trivial of type $T_1$ or $T_2$. It follows that there exists a monotone increasing map $\alpha: \mathbb{N}^2 \to \mathbb{N}^2$. We define a map $\beta: \mathbb{N} \to \mathbb{N}$ by
$$\beta(n) := \pi_1( \sup_{m} \alpha(n,m)),$$
where $\pi_1: \mathbb{N}^2 \to \mathbb{N}$ is the projection on the first component. Observe that our assumption that the subglider is not of type $T_1$ or $T_2$ ensures that the supremum is indeed finite. One checks that $\beta$ is monotone increasing and satisfies $T_n = B(M_{(\beta(n),\ast)})$. This shows that $T \supset T_1 \supset \ldots$ is trivial of type $T_3$. 
\end{proof}

Since we have that $B^v(M)_0 \subset f^h_0K$, we can invoke \propref{associatedfil} to refine the negative part of $f^hK$ to obtain a strong $e$-step filtration $f^sK$ such that $B^v(M) \in \GBS_{f^s}(K)$. We know that the negative part of $f^hK$ is a trivial $f^sK$-subfragment of type $T_3$ of the negative part of $f^sK$. Suppose that $\alpha: \mathbb{N} \to \mathbb{N}$ yields such a relation, i.e. $f^v_{-n}K = f^s_{-\alpha(n)}K$. We refine the negative part of $FK$, i.e. when the first component is negative by putting
$$F'_{(-\alpha(n),m)}K = F_{(-n,m)}K,$$
and 
$$F'_{(-i, m)}K = f^s_{-i}K \cap F_{(-n-1,m)}K$$
if $\alpha(n) < i < \alpha(n+1)$ for some $n$.
Since we only altered the horizontal direction, $M \in \GBS_{F'K}(K)$, so we may replace $F'K$ by $FK$ without harm. Suppose now that $e > 1$. In particular, we have that $f^s_{-1}K = f^s_{-2}K$. It follows that
$$f_{-1}K \supset \cap_{n \in \mathbb{Z}} F_{(-1,n)}K \supset f_{-2},$$
whence $f_{-1}K = F_{(-1,n)} = f_{-2}K$ for all $n \in \mathbb{Z}$. Next, since $f^s_0K.f^s_{-1}K = f^s_{-1}K$ we obtain 
$$F_{(0,n)}KF_{(-1,0)}K = F_{(-1,0)}K,$$
for all $n \in \mathbb{Z}$ and since we are working in a field, it follows that $f^s_0K = F_{(0,n)}K$ for all $n \in \mathbb{Z}$. After performing similar reasonings, one deduces that $FK$ is trivially filtered in the vertical direction, and so $FK$ is essentially a $\mathbb{Z}$-filtration, which we can exclude. Hence $e = 1$ and we may assume that $f^hK = f^sK$. 

\propref{fieldstrong} entails that $f^s_0K = f^h_0K = R$ is a DVR and $f^hK$ is the associated valuation filtration. One checks that 
$$gK': \quad \ldots \subset \frac{F_{(0,-2)}K}{F_{(-1,-2)}K} \subset \frac{F_{(0,-1)}K}{F_{(-1,-1)}K} \subset \frac{F_{(0,0)}K}{F_{(-1,0)}K} \subset \frac{F_{(0,1)}K}{F_{(-1,1)}K} \subset \frac{F_{(0,2)}K}{F_{(-1,2)}K} \subset \ldots$$ 
defines a separated, exhaustive filtration on the residue field $K' = f^h_0K/f_{-1}^hK$. 

\begin{lemma}
The descending chain
$$M^{res}: \quad \frac{M_{(0,0)}}{M_{(1,0)}} \supset \frac{M_{(0,1)}}{M_{(1,1)}} \supset \frac{M_{(0,2)}}{M_{(1,2)}} \supset \ldots$$
is an irreducible $gK'$-glider.
\end{lemma}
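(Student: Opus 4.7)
The plan is first to verify that $M^{res}$ really is a $gK'$-glider, and then to deduce irreducibility by lifting any $gK'$-subglider of $M^{res}$ to a $\mathbb{Z}^2$-subglider of $M$ and invoking irreducibility of $M$.

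For well-definedness, I would check two things. That the chain $M^{res}_0 \supset M^{res}_1 \supset \cdots$ is genuinely descending amounts to the natural map $M_{(0,k+1)}/M_{(1,k+1)} \to M_{(0,k)}/M_{(1,k)}$ being injective, which reduces to the equality $M_{(0,k+1)} \cap M_{(1,k)} = M_{(1,k+1)}$; this should follow from the compatibility of the horizontal and vertical directions under the lexicographic $\mathbb{Z}^2$-filtration on $K$. For the partial $gK'$-action, the three $FK$-inclusions $F_{(0,i)}K \cdot M_{(0,j)} \subset M_{(0,j-i)}$, $F_{(0,i)}K \cdot M_{(1,j)} \subset M_{(1,j-i)}$ and $F_{(-1,i)}K \cdot M_{(0,j)} \subset M_{(1,j-i)}$ all follow from the $FK$-glider axioms applied to the corresponding bidegrees; together they imply that $g_iK' = F_{(0,i)}K/F_{(-1,i)}K$ acts on $M^{res}_j = M_{(0,j)}/M_{(1,j)}$ with image inside $M^{res}_{j-i}$ for $0 \le i \le j$.

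For irreducibility, let $T \subset M^{res}$ be a subglider and set $\widetilde{T}_{(0,k)} := \pi_k^{-1}(T_k)$ and $\widetilde{T}_{(m,k)} := M_{(m,k)}$ for $m \ge 1$. After verifying that $\widetilde{T}$ really is a $\mathbb{Z}^2$-subglider of $M$, irreducibility of $M$ forces $\widetilde{T}$ to be trivial of type $T_1$, $T_2$ or $T_3$. I would transfer this triviality to $T$ by restricting to the zeroth column: types $T_1$ and $T_2$ descend directly through the quotient maps $\pi_k$, and for type $T_3$ with monotone $\alpha:\mathbb{N}^2\to\mathbb{N}^2$ and $\widetilde{T}_\gamma = M_{\alpha(\gamma)}$, the sandwich $M_{(1,k)} \subset \widetilde{T}_{(0,k)} \subset M_{(0,k)}$ forces $\alpha(0,k) = (a_k,b_k)$ with $a_k \in \{0,1\}$, and then $k \mapsto b_k$ supplies the monotone increasing map witnessing that $T$ is trivial of type $T_3$.

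The main obstacle is verifying that the lift $\widetilde{T}$ is really an $FK$-subglider of $M$: when a filtration element of bidegree $(a,b)$ with $a = m \ge 1$ acts on $\widetilde{T}_{(m,k)} = M_{(m,k)}$, the result a priori only lands in $M_{(0,k-b)}$, whereas we need it to lie inside the possibly smaller $\widetilde{T}_{(0,k-b)} = \pi_{k-b}^{-1}(T_{k-b})$. To overcome this I would exploit the strongness of $f^hK = f^sK$ established in the preceding paragraphs, together with the DVR structure of $R = f^h_0K$: one can either factor $F_{(a,b)}K = F_{(a,0)}K \cdot F_{(0,b)}K$ and reduce to the $g_bK'$-action on $T$, or redefine $\widetilde{T}_{(m,k)} := \pi^m \cdot \widetilde{T}_{(0,k)}$ using a uniformizer $\pi$ of $R$ so that the partial action automatically respects the lifted subglider.
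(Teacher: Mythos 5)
Your overall strategy --- lift a subglider of $M^{res}$ back to a $\mathbb{Z}^2$-subglider of $M$, invoke irreducibility of $M$, and descend triviality --- is the same idea the paper uses, but the specific lift you choose is different and runs into real trouble that the paper's choice avoids.

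Two concrete issues. First, in your well-definedness step you reduce the descending-chain property of $M^{res}$ to the equality $M_{(0,k+1)} \cap M_{(1,k)} = M_{(1,k+1)}$ and assert this follows from lex-compatibility. It does not: under the lexicographic order $(0,k+1) < (1,k)$, so $M_{(0,k+1)} \supset M_{(1,k)}$ and hence $M_{(0,k+1)} \cap M_{(1,k)} = M_{(1,k)}$, which strictly contains $M_{(1,k+1)}$ in general. So the naive inclusion-induced map $M_{(0,k+1)}/M_{(1,k+1)} \to M_{(0,k)}/M_{(1,k)}$ is not injective, and the reduction you propose is wrong; whatever argument one gives for $M^{res}$ being an honest descending chain has to be different from this.

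Second, and more centrally, your lift $\widetilde{T}_{(0,k)} = \pi_k^{-1}(T_k)$, $\widetilde{T}_{(m,k)} = M_{(m,k)}$ for $m \ge 1$ is, as you yourself flag, not a subglider: already the positive part in the first coordinate, $F_{(1,0)}K\cdot \widetilde{T}_{(1,k)} = F_{(1,0)}K M_{(1,k)}$, need not land inside $\widetilde{T}_{(0,k)}$ when the latter is a proper submodule of $M_{(0,k)}$ (indeed, with a strong filtration it typically lands in all of $M_{(0,k)}$). Your fixes both amount to replacing the levels $m\ge 1$ by something generated from level $0$, which is exactly the paper's move --- but the paper carries it out in the cleanest possible way: rather than lifting the whole chain $T$, it picks a single $F_{(0,0)}K$-submodule $N$ with $M_{(0,0)} \supsetneq N \supsetneq M_{(1,0)}$ corresponding to (a shift of) the nontrivial top $\ov{N}_0$, and defines the lift to be the \emph{generated} subglider $\widetilde{N}_{(i,j)} := F_{(-i,-j)}K\cdot N$. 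A generated chain of this form is automatically an $FK$-subglider of $M$, with no appeal to strongness of $FK$ or to the DVR structure, so the entire obstacle you were working around simply does not appear. One then only needs to verify that this generated subglider cannot be trivial, which is where the strict containments $M_{(0,0)} \supsetneq N \supsetneq M_{(1,0)}$ (and the fact that $\ov{N}$ itself is not a shift) enter. I would recommend adopting the paper's "lift a single level and generate" construction; it replaces the most delicate part of your argument with a one-line verification.
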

\begin{proof}
Suppose that $\ov{N} \supset \ov{N}_1 \supset \ov{N}_2 \supset \ldots$ is a non-trivial $gK'$-subglider. Without loss of generalization, we may assume that $\ov{N}$ corresponds to an $F_{(0,0})K$-submodule 
$$M_{(0,0)} \supsetneq N \supsetneq M_{(1,0)}.$$
However, $F_{(-i,-j)}KN$ would define a non-trivial $FK$-subglider of $M$. 
\end{proof}

Observe that we do not know whether $M^{res} \in \GBS_{gK'}(K')$ as we do not know whether $M_{(1,0)} \subset f^h_{-1}K$. However, we can consider for any $s \geq 0$, the descending chain
$$M^{res}_s: \quad \frac{M_{(s,0)}}{M_{(s+1,0)}} \supset \frac{M_{(s,1)}}{M_{(s+1,1)}} \supset \frac{M_{(s,2)}}{M_{(s+1,2)}} \supset \ldots$$
One shows analogously that they are all irreducible $gK'$-gliders. Hence, they must be all isomorphic since they are contained in one another. Choosing $s$ large enough such that $M_{(s+1,1)} \subset f_{-1}^hK$ then shows that $\GBS_{gK'}(K') \neq \emptyset$. Hence, we can again invoke \propref{fieldstrong} to deduce that $g_0K'$ is a DVR and $g'K$ is the strong $f$-step valuation filtration for some $f \geq 1$.\\

We denote by $\GBS_{\mathbb{Z}^2}(K)$ the union of all $\GBS_F(K)$ where $FK$ is a $\mathbb{Z}^2$-filtration with lexicographical ordering. Let $K/k$ be a field extension, then $R(K/k,2)$ denotes the Riemann surface of all rank 2 valuations containing $k$, i.e. which are trivial on $k$. We have the generalization of \theref{GBSfield}.

\begin{theorem}\thelabel{GBS2}
Let $K/k$ be a field of transcendence degree $\tdeg_k(K) \geq 2$. We have an isomorphism as sets
$$\GBS_{\mathbb{Z}^2}(K) =R(K/k,2) \times \mathbb{Z}^2.$$
\end{theorem}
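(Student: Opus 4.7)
The plan is to mirror the proof of \theref{GBSfield}, leveraging the detailed groundwork laid just before the theorem statement. First I would observe that the analysis preceding the theorem already extracts from any $M \in \GBS_{\mathbb{Z}^2}(K)$ the following data: a DVR $R = f^h_0K$ on $K$ whose valuation filtration is $f^hK$ (after the two refinements carried out above), and a DVR $g_0K'$ on the residue field $K' = R/\mathfrak{m}_R$ whose valuation filtration is $gK'$. Composing these produces a rank $2$ valuation ring
$$R' := \{ x \in R ~\vline~ x + \mathfrak{m}_R \in g_0K' \}$$
on $K$, whose value group is canonically $\mathbb{Z}^2$ with the lexicographic order. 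Under the standing convention that the filtrations considered are $k$-algebra filtrations (so that the induced filtrations restrict trivially to $k$, which is also what makes $R(K/k,2)$ non-empty in the presence of $\tdeg_k K \geq 2$), we have $k \subset R'$, and hence $R' \in R(K/k,2)$.

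Next I would pin down the $\mathbb{Z}^2$-factor. By \exaref{DVR} applied to the DVR filtration $f^hK$, the irreducible $f^hK$-glider $B^v(M)$ equals $(f^h_aK)_*$ for a unique $a \in \mathbb{Z}$; likewise the residue glider $M^{res}$ (after the shift used in the preparation so that it lands inside a genuine DVR filtration) produces a unique $b \in \mathbb{Z}$. The assignment $M \mapsto (R',(a,b))$ then defines the candidate map $\GBS_{\mathbb{Z}^2}(K) \to R(K/k,2) \times \mathbb{Z}^2$.

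For the inverse, given $R' \in R(K/k,2)$ with valuation filtration $F^{R'}K$ and $(a,b) \in \mathbb{Z}^2$, I would define the $\mathbb{N}^2$-indexed chain
$$M^{(a,b)}_{(i,j)} := F^{R'}_{(a-i,\, b-j)}K, \qquad (i,j) \in \mathbb{N}^2,$$
and verify that $M^{(a,b)} \in \GBS_{F^{R'}}(K)$. Irreducibility would be checked by confronting a putative non-trivial subglider $N$ with its horizontal and vertical projections $B^v(N) \subset B^v(M^{(a,b)})$ and $N^{res} \subset (M^{(a,b)})^{res}$, and invoking \propref{fieldstrong} in each coordinate direction to force triviality. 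By construction the two assignments are mutually inverse, yielding the stated bijection.

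The main obstacle will be ruling out genuinely two-dimensional non-trivial subgliders, namely subgliders of type $T_3$ whose governing monotone map $\alpha : \mathbb{N}^2 \to \mathbb{N}^2$ is not a product of one-dimensional shifts. This is exactly the subtlety already confronted in the preparatory lemma showing $B^v(M) \in \GBS_{f^hK}(K)$; the trick there of replacing $\alpha$ by $\beta(n) := \pi_1(\sup_m \alpha(n,m))$ should extend \emph{mutatis mutandis}, reducing the two-dimensional subglider analysis to two independent DVR glider problems where \propref{fieldstrong} applies and pins down $\alpha$ as a pure shift.
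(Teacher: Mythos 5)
Your strategy is essentially the paper's: use the preparatory material to reduce to the case where $M$ lives over the $\mathbb{Z}^2$-valuation filtration obtained by composing the DVR $f^h_0K$ on $K$ with the DVR $g_0K'$ on the residue field, and then read off the shift in $\mathbb{Z}^2$ from the two coordinate directions. Your $(a,b)$ is exactly the paper's $(m,n)$, just extracted via the gliders $B^v(M)$ and $M^{res}$ rather than directly from $\bigcap_i M_{(0,i)}$ and $M_{(1,0)}$; the two descriptions coincide.

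Where you go beyond the paper is in two places, and both are improvements in care rather than changes of route. First, you flag that landing in $R(K/k,2)$ (valuations trivial on $k$) requires the ambient filtrations on $K$ to be $k$-algebra filtrations; the paper's definition of $\GBS_{\mathbb{Z}^2}(K)$ never mentions $k$, so this hypothesis really is needed for the equality to hold as stated, and your remark makes an implicit assumption explicit. Second, the paper's proof only establishes that every $M \in \GBS_{\mathbb{Z}^2}(K)$ is of the form $(F^v_{(m,n)}K)_*$ — it does not verify that every such shift of a rank-$2$ valuation filtration is actually an irreducible glider. You explicitly build the inverse $M^{(a,b)}$ and sketch why it has no non-trivial subgliders via the horizontal/vertical projection trick already used in the lemma for $B^v(M)$; that is the right device for handling type-$T_3$ subgliders with genuinely two-dimensional $\alpha$, though as you note it would need to be spelled out. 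Overall the argument is sound and closer to complete than the paper's.
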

\begin{proof}
We showed that if $M \in \GBS_F(K)$, then $M \in \GBS_{F^v}(K)$ where $F^vK$ is the valuation filtration of a valuation of rank 2. From the structure of irreducible gliders for $\mathbb{Z}$-filtrations on fields we know that there exists $m \in \mathbb{Z}$ such that 
$$\bigcap_{i} M_{(0,i)} = \bigcup_{j \in \mathbb{Z}} F_{(m,j)}^vK \supset M_{(1,0)}.$$
Since $M_{(1,0)}$ is an $F^v_{(0,0)}K$-module, there exists $n \in \mathbb{Z}$ such that 
$$M_{(1,0)} = F_{(m,n)}^vK.$$
It then follows that 
$$M_{j,i} = F_{m-1-j,n-i}^vK$$
for all $i,j \geq 0$. 
\end{proof}


\begin{thebibliography}{99}

\bibitem{Ar} M. Artin (Notes by A. Verschoren), \emph{Brauer-Severi varieties}, in: F. Van Oystaeyen, A. Verschoren (Eds.), \emph{Brauer groups in ring theory and algebraic geometry}, Proceedings of a conference held at Antwerp 1981, Springer, Lecture Notes Math. 917, Berlin, Heidelberg, New York 1982, 194-210.

\bibitem{Ch} F. Ch\^atelet, \emph{Variations sur un th\`eme de H. Poincar\'e}, Ann. Sci. E.N.S. 61, 249-300 (1944).

\bibitem{CVo1} F. Caenepeel, F. Van Oystaeyen, \emph{Clifford Theory for glider representations}, Algebr. Represent. Theory 19 (2016) no. 6, 1477-1493.

\bibitem{CVo2} F. Caenepeel, F. Van Oystaeyen, \emph{Glider representations of group algebra filtrations of nilpotent groups}, Algebr. Represent. Theory (2018), no. 3, 529-550.

\bibitem{CVo3} F. Caenepeel, F. Van Oystaeyen, \emph{Localization and sheaves of glider representations}, J. Pure Appl. Algebra 223 (2019), no. 6, 2635-2672.

\bibitem{CVo4} F. Caenepeel, F. Van Oystaeyen, \emph{Generalized characters for glider representations of groups},  (2019), https://doi.org/10.1007/s10468-018-09850-8.

\bibitem{Dade} E.C. Dade, \emph{Group Graded Rings and Modules}, Math. Zeitschrift 174, 3, 1980, 241-262.

\bibitem{EVO} M. El Baroudy, F. Van Oystaeyen, \emph{Fragments with Finiteness Conditions in Particular over Group Rings}, Comm. Algebra 28 (2000), no. 1, 321-336.

\bibitem{For} E. Formanek, \emph{Noetherian PI-rings}, Comm.Algebra {\bf 1} (1974), 79-86.


\bibitem{involutions} M-A. Knus, A. Merkurjev, M. Rost, J-P. Tignol, \emph{The book of involutions}, American Mathematical Society Colloquium Publications, 44. American Mathematical Society, Providence, RI, 1998. xxii+593 pp.

\bibitem{LiVo} H. Li, F. Van Oystaeyen, \emph{Filtrations on Simple Artinian Rings}, Journal of Algebra {\bf 132}, 1990, pp. 361-376.


\bibitem{Mat} H. Matsumura, \emph{Commutative Ring Theory}, Vol. 8 of Cambridge Studies in Advanced Mathematics (1989).

\bibitem{NVo1}
S. Nawal, F. Van Oystaeyen, \emph{An Introduction of Fragmented Structures over Filtered Rings}, Comm. Algebra {\bf 23} (1995), 975--993.

\bibitem{NVo2} S. Nawal, F. Van Oystaeyen, \emph{Projective Aspects of Fragments over Filtered Rings}, Comm. Algebra {\bf 24} (1996), 165--185.

\bibitem{Rein}
I. Reiner, \emph{Maximal orders}, London Mathematical Society Monographs, No. 5. Academic Press, London-New York, 1975, xii+395 pp. 

\bibitem{Ser} J.P. Serre, \emph{Corps Locaux}, Publications de l'Institut de Math\'ematique de l'Universit\'e de Nancago, VIII, \emph{Actualit\'es Sci. Indust.}, No. 1296. Hermann, Paris 1962, 243 pp.


\end{thebibliography}
\end{document}